\numberwithin{equation}{section}
\theoremstyle{plain}
\newtheorem{thm}{Theorem}[section]
\newtheorem{prop}[thm]{Proposition}
\newtheorem{lem}[thm]{Lemma}
\newtheorem{de}[thm]{Definition}
\newtheorem{rem}[thm]{Remark}
\newtheorem{cor}[thm]{Corollary}
\newtheorem{set}[thm]{Setting}
\newtheorem{ex}[thm]{Example}
\newcommand{\bb}{\mathbb}
\newcommand{\cal}{\mathcal}
\DeclareMathOperator*{\Hom}{Hom}
\DeclareMathOperator{\Ad}{Ad}
\DeclareMathOperator{\ad}{ad}
\title[discretely decomposable restriction of $A_{\mathfrak q}(\lambda)$]{Classification of discretely decomposable $A_{\mathfrak q}(\lambda)$
with respect to reductive symmetric pairs}
\thanks{%
2010 MSC: Primary 22E46; % Semisimple Lie groups and their representations
Secondary 53C35% Symmetric spaces
}
\keywords{unitary representation, Zuckerman's derived functor module,
branching law, symmetric pair, reductive group, 
discretely decomposable restriction}
\author[Toshiyuki Kobayashi]{Toshiyuki Kobayashi$^*$}
\address[Toshiyuki Kobayashi]{Kavli IPMU and 
        Graduate School of Mathematical Sciences, 
        The University of Tokyo, 3-8-1 Komaba, Meguro, 
        153-8914 Tokyo, Japan}
\thanks{* Partially supported by
        Grant-in-Aid for Scientific Research (B) (22340026), Japan
        Society for the Promotion of Science}
\author[Yoshiki Oshima]{Yoshiki Oshima$^{**}$}
\thanks{** Partially supported by
        Grant-in-Aid for JSPS Fellows}
\address[Yoshiki Oshima]{Graduate School of Mathematical Sciences, 
        The University of Tokyo, 3-8-1 Komaba, Meguro, 153-8914 Tokyo, Japan}
\begin{document}

\maketitle

%%%%%%%%%%%%%%%%%%%%%%%%%%%%%%%%%%%%%%%%%%%%%%%%%%%%%%
%%%%%%%%%%%%%%%%%%%%%%%%%%%%%%%%%%%%%%%%%%%%%%%%%%%%%%
%%%%%%%%%%%%%%%%%%%%%%%%%%%%%%%%%%%%%%%%%%%%%%%%%%%%%%
%%%%%%%%%%%%%%%%%%%%%%%%%%%%%%%%%%%%%%%%%%%%%%%%%%%%%%
%%%%%%%%%%%%%%%%%%%%%%%%%%%%%%%%%%%%%%%%%%%%%%%%%%%%%%

\begin{abstract}
We give a classification of the triples
$(\mathfrak{g},\mathfrak{g}',\mathfrak{q})$ 
such that Zuckerman's derived functor $(\mathfrak{g},K)$-module
$A_{\mathfrak{q}}(\lambda)$ for a $\theta$-stable parabolic subalgebra
$\mathfrak{q}$ is discretely decomposable with respect to a reductive
symmetric pair $(\mathfrak{g},\mathfrak{g}')$.
The proof is based on the criterion for discretely decomposable
restrictions by the first author and on Berger's classification of
reductive symmetric pairs.
\end{abstract}

\section{Introduction}
\label{sec:intro}

Branching problems in representation theory
 ask how an irreducible representation decomposes 
when restricted to a subgroup (or a subalgebra).  

In the category of unitary representations of a locally compact group $G'$,
 one can describe irreducible decompositions
 by means of the direct integrals of Hilbert spaces.  
The object of our study is the restriction 
 of an irreducible unitary representation $\pi$ of $G$ to its
 subgroup $G'$, in particular when $G$ and $G'$ are both reductive Lie groups.
Then the irreducible decomposition is unique; 
 however,
 it may contain continuous spectrum
 in the direct integral of Hilbert spaces. 

For a reductive Lie group $G$, 
 we can consider branching problems also
 in the category of $({\mathfrak {g}}, K)$-modules.  
If the underlying $(\mathfrak{g},K)$-module $\pi_K$ is
 discretely decomposable as a $(\mathfrak{g}',K')$-module
(see Definition \ref{de:2.1}),
then the branching laws of the restrictions of the unitary
 representation $\pi$ to $G'$ and the $(\mathfrak{g},K)$-module $\pi_K$ 
 to $(\frak{g}',K')$ are essentially 
 the same in the following sense:
\begin{alignat*}{3}
&\pi|_{G'}
&&\simeq \sideset{}{^\oplus}\sum_{\tau\in\widehat{G'}}
         m_\pi(\tau)\tau
&&\quad\text{(Hilbert direct sum)},
\\
&\pi_K|_{(\mathfrak{g}',K')}
&&\simeq \bigoplus_{\tau\in\widehat{G'}}
         m_\pi(\tau)\tau_{K'}
&&\quad\text{(algebraic direct sum)},
\end{alignat*}
where $\widehat{G'}$ is the set of equivalence classes of irreducible
unitary representations of $G'$,
and $\tau_{K'}$ is the underlying $(\mathfrak{g}',K')$-module of
$\tau$.
The key ingredient here 
is that the natural map
\begin{equation}\label{eqn:Hom}
\Hom\nolimits_{(\mathfrak{g}',K')}(\tau_{K'},\pi_K)\to
\Hom\nolimits_{G',\text{continuous}}(\tau,\pi)
\end{equation}
is bijective, and therefore the dimensions of the spaces of homomorphisms 
coincide, giving the same multiplicity $m_\pi(\tau)$ in the branching laws.

It should be noted that \eqref{eqn:Hom} is not surjective in general and
 that the restriction of an irreducible and unitarizable
 $({\mathfrak {g}},K)$-module $\pi_K$ may not be decomposed
 into an algebraic direct sum
 of irreducible $({\mathfrak {g}}',K')$-modules.  
Such a phenomenon happens whenever continuous spectrum appears
 in the branching law of the restriction of the
 unitary representation $\pi$ to ${G'}$.

The aim of this article is to give a classification
 of the triples $({\mathfrak {g}}, {\mathfrak {g}}', \pi_K)$
 such that the $({\mathfrak {g}}, K)$-module $\pi_K$ is discretely decomposable
 as a $({\mathfrak {g}}', K')$-module
 in the setting
 where 
\begin{align*}
&\text{$({\mathfrak {g}}, {\mathfrak {g}}')$ is a reductive symmetric pair,}
\\
&\text{$\pi_K$ is Zuckerman's derived functor module $A_{\frak {q}}(\lambda)$.}  
\end{align*}

The condition for discrete decomposability does not change if we replace $K$ and $K'$ 
 by their finite covering groups or subgroups of finite index.
Thus, we may and do assume that $K$ is connected and 
$K'=K^\sigma$ (or equivalently $G'=G^\sigma$), 
where $\sigma$ is an involution of $G$ leaving $K$ stable.
Further,
 the condition for discrete decomposability of $A_{\mathfrak{q}}(\lambda)$
 depends on a $\theta$-stable
 parabolic subalgebra ${\mathfrak {q}}$, 
but is independent of the parameter $\lambda$ in the good range.  

Our main result is Theorem \ref{ddclass} 
 with Tables \ref{holparablist}, \ref{holpairlist}, 
 \ref{dslist} and \ref{isolist}. 
 They give a classification of the triples $({\mathfrak {g}}, {\mathfrak {g}}^{\sigma}, {\mathfrak {q}})$
 for which $A_{\mathfrak {q}}(\lambda)$
 is discretely decomposable 
 as a $({\mathfrak {g}}^{\sigma}, K^{\sigma})$-module.  
The list is described up to the conjugacy of 
$K \times K$ 
 as we explain at the beginning of Section \ref{sec:class}.
We find that quite a large part of such triples 
 $({\mathfrak{g}}, {\mathfrak{g}}^{\sigma}, {\mathfrak{q}})$
 appear as a \lq{family}\rq\
 containing $({\mathfrak{g}}, {\mathfrak{g}}^{\sigma}, {\mathfrak{b}})$ 
with ${\mathfrak{b}}$  a $\theta$-stable Borel subalgebra.  
We call them {\it discrete series type} (see Tables \ref{holparablist}, \ref{holpairlist} 
and \ref{dslist}), 
which include {\it holomorphic type} as a special case
 (see Proposition \ref{holdd}).
Moreover, there are some other triples, which we refer to as 
{\it isolated type} (see Table \ref{isolist}).

The tensor product of two representations
 is an example of the restriction 
 with respect to symmetric pairs.  
Thus,
a very special case of our theorem includes the classification
 of two discrete series representations $\pi_1$ and $\pi_2$ of $G'$
 such that the tensor product representation $\pi_1 \otimes \pi_2$ decomposes discretely 
 (see Corollary \ref{dstensor}).  

There exist irreducible symmetric pairs $(\frak{g},\frak{g}^\sigma)$ for
 which any non-trivial $A_\frak{q}(\lambda)$ is not discretely decomposable.
We give a classification of all such pairs $(\frak{g},\frak{g}^\sigma)$
 in Theorem \ref{notdd}.

The proof is based on the 
criterion for the discretely decomposable restriction
established in \cite{kob94,kob98i,kob98ii}, 
 see Theorem \ref{crit},
and on the classification of reductive symmetric pairs 
$(\mathfrak{g},\mathfrak{g}')$ by Berger \cite{ber}
up to outer automorphisms of $\mathfrak{g}$.

%%%%%%%%%%%%%%%%%%%%%%%%%%%%%%%%%%%%%%%%%%%%%%%%%%%%%%%%%%%%%%%

\section{Discretely Decomposable $A_\frak{q}(\lambda)$ for Symmetric Pairs}
\label{sec:dd}

Let $G$ be a connected real reductive Lie group. 
We write $\frak {g}$ for the Lie algebra of $G$ and $\frak{g}_\bb{C}$
for its complexification.  
Analogous notation will be used for other Lie algebras.

Let $\sigma$ be an involutive automorphism of $G$, 
and we set $G^\sigma :=\{g\in G:\sigma g=g\}$.
Then $(G,G^\sigma)$ forms a reductive symmetric pair.
Take a Cartan involution $\theta$ of $G$ which commutes with $\sigma$.
Then $K:=G^\theta$ and $K^\sigma=K\cap G^\sigma$ are maximal compact subgroups 
of $G$ and $G^\sigma$, respectively.
We let $\theta$ and $\sigma$ also denote the induced involutions 
on $\frak{g}$ and their complex linear extensions to 
$\frak{g}_\bb{C}$.
The Cartan decompositions are denoted by $\frak{g}=\frak{k}+\frak{p}$ and 
$\frak{g}^\sigma=\frak{k}^\sigma+\frak{p}^\sigma$, respectively.

We recall from \cite{kob98ii}
 the following basic notion,
 which we shall apply to branching problems
 in the category of $({\mathfrak {g}},K)$-modules.  

\begin{de}\label{de:2.1}
{\rm 
We say that a $(\frak{g},K)$-module 
$V$ is {\it discretely decomposable} if 
there exists an increasing filtration $\{V_n\}$ such that 
$V=\bigcup_{n=0}^{\infty} V_n$ and 
each $V_n$ is of finite length as a 
$(\frak{g},K)$-module.
}
\end{de}

\begin{rem}[see {\cite[Lemma 1.3]{kob98ii}}]
{\rm 
Suppose that $V$ is a unitarizable $(\frak{g},K)$-module.
Then $V$ is discretely decomposable if and only if 
$V$ is isomorphic to 
the algebraic direct sum of irreducible $(\frak{g},K)$-modules.
}
\end{rem}

Next,
 let us fix some notation concerning Zuckerman's derived functor modules
 $A_{\mathfrak{q}}(\lambda)$.  
Suppose $\frak{q}$ is a $\theta$-stable parabolic subalgebra of 
$\frak{g}_\bb{C}$.
The normalizer $L=N_G(\frak{q})$ of $\frak{q}$ is a connected reductive 
subgroup of $G$.
Hence a unitary character $\bb{C}_\lambda$ of $L$ is determined by 
its differential $\lambda\in\sqrt{-1}\frak{l}^*$. 
Associated to the data $(\frak{q},\lambda)$, one defines 
Zuckerman's derived functor module
 $A_\frak{q}(\lambda)$ as in \cite[(5.6)]{KnVo}.
In our normalization, $A_\frak{q}(0)$ is a unitarizable $(\frak {g}, K)$-module
 with non-zero $(\frak{g},K)$-cohomologies,
 and in particular,
 has the same infinitesimal character 
as the trivial one-dimensional representation $\bb{C}$ of $\frak{g}$.
We note that if $\frak{q}=\frak{g}_\bb{C}$, then $L=G$ and 
 $A_\frak{q}(\lambda)$ is one-dimensional.

Take a fundamental Cartan subalgebra $\frak{h}$ of $\frak{l}$  
and choose a positive root system $\Delta^+(\frak{g}_\bb{C},\frak{h}_\bb{C})$
such that the set $\Delta(\frak{q},\frak{h}_\bb{C})$ 
of roots for $\frak{q}$ contains all the positive roots, and
 set $\Delta^+(\frak{l}_\bb{C},\frak{h}_\bb{C})
 :=\Delta(\frak{l}_\bb{C},\frak{h}_\bb{C})
 \cap\Delta^+(\frak{g}_\bb{C},\frak{h}_\bb{C})$.
Let $\frak{u}$ be the nilradical of $\frak{q}$.
Denote by $\rho, \rho_{\frak{l}}$, and $\rho(\frak{u})\in\frak{h}_\bb{C}^*$
 half the sum of roots in
 $\Delta^+(\frak{g}_\bb{C},\frak{h}_\bb{C})$,
 $\Delta^+(\frak{l}_\bb{C},\frak{h}_\bb{C})$,
 and $\Delta(\frak{u},\frak{h}_\bb{C})$, respectively.
Let $\langle\cdot,\cdot\rangle$ be an invariant bilinear form 
on $\frak{h}_\bb{C}^*$ 
that is positive definite on the real span of the roots.
Following the terminology \cite[Definitions 0.49 and 0.52]{KnVo},
we say for a unitary character $\bb{C}_\lambda$ of $L$, 
$\lambda$ is in the {\it good range} if 
\[{\rm Re} \langle \lambda+\rho,\alpha\rangle>0\quad 
\alpha\in\Delta(\frak{u},\frak{h}_\bb{C}), \]
and in the {\it weakly fair range} if 
\[{\rm Re} \langle \lambda+\rho(\frak{u}),\alpha\rangle\geq 0\quad 
\alpha\in\Delta(\frak{u},\frak{h}_\bb{C}).\]
The $K$-finite Hermitian dual of the $(\frak{g},K)$-module
 $A_{\frak{q}}(\lambda)$ in the normalization here is isomorphic
 to the cohomologically induced module
 ${\cal R}_\frak{q}^S(\bb{C}_{\nu})$ with
 $S=\dim_\bb{C}(\frak{u}\cap \frak{k}_\bb{C})$ and
 $\nu=\lambda+\rho(\frak{u})$
 in the normalization of \cite{kob94}.
Accordingly, the good range (resp.\ the weakly fair range)
 amounts to the condition on $\nu$ as 
\[{\rm Re} \langle \nu+\rho_\frak{l},\alpha\rangle> 0\quad 
\alpha\in\Delta(\frak{u},\frak{h}_\bb{C}) \quad 
 ({\text{resp. }} {\rm Re} \langle \nu,\alpha\rangle\geq 0\quad 
\alpha\in\Delta(\frak{u},\frak{h}_\bb{C})).\]
We pin down some basic properties of the $({\mathfrak{g}}, K)$-module 
 $A_{\frak {q}}(\lambda)$ (\cite[Chapters VIII and IX]{KnVo}).
\begin{thm}
If $\lambda$ is in the weakly fair range, $A_\frak{q}(\lambda)$
is unitarizable or zero.
If $\lambda$ is in the good range, $A_\frak{q}(\lambda)$ 
is non-zero and irreducible.
\end{thm}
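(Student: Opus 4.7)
The plan is to treat the two statements separately, as they rely on rather different techniques from the theory of cohomological induction. In both cases I would work through the equivalent formulation in terms of ${\cal R}^S_\frak{q}(\bb{C}_\nu)$ with $\nu=\lambda+\rho(\frak{u})$, since the bottom-layer $K$-type machinery and the Jantzen-type analysis are most naturally carried out there.

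For the good-range assertion I would proceed in three steps. First, the standard vanishing theorem for cohomological induction gives ${\cal R}^j_\frak{q}(\bb{C}_\nu)=0$ for $j\ne S$ once $\nu$ is in the good range; this comes from a spectral-sequence argument combined with Kostant-type cohomology calculations for $\frak{u}$. Second, non-vanishing of ${\cal R}^S_\frak{q}(\bb{C}_\nu)$ is detected by the bottom layer: the $L\cap K$-type $\bb{C}_\nu\otimes\wedge^{\mathrm{top}}(\frak{u}\cap\frak{p})$ lifts to a non-zero $K$-type in ${\cal R}^S_\frak{q}(\bb{C}_\nu)$ with controlled multiplicity. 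Third, irreducibility is the Vogan--Zuckerman theorem: in the good range ${\cal R}^S_\frak{q}$ is exact and sends irreducible $(\frak{l},L\cap K)$-modules to irreducible $(\frak{g},K)$-modules, and the one-dimensional character $\bb{C}_\nu$ is trivially irreducible.

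For the weakly fair assertion the strategy is Vogan's signature-character argument. One constructs a canonical invariant Hermitian form on $A_\frak{q}(\lambda)$ inherited from $\bb{C}_\lambda$ through the duality between ${\cal R}^S_\frak{q}(\bb{C}_\nu)$ and $A_\frak{q}(\lambda)$ recorded above, and then tracks the signature of this form as $\lambda$ varies. At $\lambda$ in the good range the module is irreducible and the form is non-degenerate, and positivity at a base point (say $\lambda=0$) is built into the chosen normalization. One then deforms $\lambda$ continuously into the weakly fair range and shows that the signature cannot acquire a negative component, using the Jantzen filtration on the standard module and the compatibility of the signature form with composition factors; whenever a composition factor splits off, either it is identified with another $A_{\frak{q}'}(\lambda')$ already known to be unitarizable by induction on the parabolic, or it vanishes.

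The main obstacle is clearly the weakly fair range outside the good range, since crossing a reducibility wall can in principle introduce negative signature contributions. The delicate point is to verify that in the weakly fair range every composition factor of $A_\frak{q}(\lambda)$ remains unitarizable, so that no negative signature is generated. Since the statement is cited from \cite[Chapters VIII and IX]{KnVo}, in practice the proof in the paper should reduce to invoking the relevant theorems of Vogan and Vogan--Zuckerman there rather than reproducing the signature analysis from scratch.
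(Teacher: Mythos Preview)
The paper gives no proof of this theorem at all; it is stated as a quotation of standard results, with the sentence immediately preceding it pointing to \cite[Chapters VIII and IX]{KnVo}. You anticipated exactly this in your final paragraph, and your outline is a fair summary of how those chapters actually proceed: vanishing away from degree $S$ plus the bottom-layer map for the good-range statement, and Vogan's signature/deformation argument for unitarizability in the weakly fair range. So there is nothing to compare against in the paper itself, and your sketch is consistent with the cited source.
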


\begin{thm}
\label{aqds}
Suppose that ${\rm rank}\,\frak{g}_\bb{C}={\rm rank}\,\frak{k}_\bb{C}$.
If $\frak{q}$ is a $\theta$-stable Borel subalgebra of $\frak{g}_\bb{C}$ 
and if $\lambda$ is in the good range, 
then $A_\frak{q}(\lambda)$ is isomorphic to the 
underlying $(\frak{g},K)$-module of a discrete series representation of $G$.
Conversely the underlying $(\frak{g},K)$-module of 
any discrete series representation of $G$ is isomorphic to 
$A_\frak{q}(\lambda)$ for some $\theta$-stable Borel subalgebra $\frak{q}$ 
and $\lambda$ in the good range.
\end{thm}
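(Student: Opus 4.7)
The plan is to exploit the rigid structure of $\theta$-stable Borel subalgebras under the equal rank hypothesis, and then match the invariants of $A_{\frak q}(\lambda)$ with those of discrete series in the Harish-Chandra parametrization.

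First I would set up the geometry. The equal rank assumption ${\rm rank}\,\frak{g}_\bb{C}={\rm rank}\,\frak{k}_\bb{C}$ guarantees that $G$ admits a compact Cartan subgroup $H\subset K$; write $\frak{h}$ for its Lie algebra. A $\theta$-stable Borel subalgebra $\frak q$ contains a $\theta$-stable Cartan subalgebra of $\frak g_{\bb C}$, and the equal rank hypothesis forces this Cartan (up to $K$-conjugation) to be $\frak h_{\bb C}$. Consequently $L=N_G(\frak q)=H$ is a compact Cartan subgroup, $\frak q=\frak h_{\bb C}\oplus\frak u$ for a positive root system $\Delta^+(\frak g_{\bb C},\frak h_{\bb C})$ with $\frak u$ the sum of positive root spaces, and a unitary character $\bb{C}_\lambda$ of $L$ is determined by an analytically integral weight $\lambda\in\sqrt{-1}\frak h^*$.

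For the first direction, assuming $\lambda$ is in the good range, the previous theorem yields that $A_{\frak q}(\lambda)$ is non-zero, irreducible, and unitarizable. I would identify it via two invariants. By the standard property of cohomological induction, its infinitesimal character is $\lambda+\rho$, which is a regular integral weight and precisely a Harish-Chandra parameter for a discrete series. By the Vogan--Zuckerman lowest $K$-type theorem, the lowest $K$-type of $A_{\frak q}(\lambda)$ is the irreducible $K$-representation of highest weight $\lambda+2\rho(\frak u\cap\frak p_{\bb C})$, occurring with multiplicity one. These two invariants coincide with those carried by the discrete series representation of $G$ with Harish-Chandra parameter $\lambda+\rho$; since an irreducible admissible tempered $(\frak g,K)$-module is determined up to isomorphism by its infinitesimal character together with its lowest $K$-type (Vogan's classification), $A_{\frak q}(\lambda)$ must be isomorphic to that discrete series. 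For the converse, I would start with a discrete series $\pi$ with Harish-Chandra parameter $\Lambda$, choose the unique positive system making $\Lambda$ strictly dominant, take $\frak q$ to be the associated $\theta$-stable Borel, and set $\lambda:=\Lambda-\rho$ as a character of $L=H$. Strict dominance of $\Lambda$ places $\lambda$ in the good range, and the first direction then realizes $\pi_K$ as $A_{\frak q}(\lambda)$.

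The main obstacle is the identification of the abstractly produced irreducible unitarizable $(\frak g,K)$-module $A_{\frak q}(\lambda)$ with a discrete series, because square-integrability is an analytic condition not directly visible from the cohomological construction. I would bridge this gap either by invoking Vogan's uniqueness theorem for tempered irreducibles with prescribed lowest $K$-type, or by comparing $K$-multiplicities with the Blattner formula, which was verified for discrete series by Hecht--Schmid. Either route converts the analytic identification into a combinatorial matching of highest weights and infinitesimal characters, and that matching is straightforward once the parameters $\lambda+\rho$ and $\lambda+2\rho(\frak u\cap\frak p_{\bb C})$ are written down.
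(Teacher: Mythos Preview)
Your outline is correct and is essentially the standard argument found in Knapp--Vogan, \emph{Cohomological Induction and Unitary Representations}, Chapters VIII and IX. Note, however, that the paper does not supply its own proof of this theorem: it is stated as a known background result with a citation to \cite{KnVo}, so there is no ``paper's proof'' to compare against beyond that reference. Your sketch---identifying the Levi as a compact Cartan under the equal rank hypothesis, then matching infinitesimal character $\lambda+\rho$ and lowest $K$-type $\lambda+2\rho(\frak u\cap\frak p_{\bb C})$ with the Harish-Chandra and Blattner parameters of a discrete series---is exactly the route taken there.
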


The goal of this article is to give a classification of 
the triples $(G,G^\sigma,\frak{q})$ such that 
the $({\mathfrak{g}},K)$-module $A_\frak{q}(\lambda)$ is 
discretely decomposable 
as a $(\frak{g}^\sigma,K^\sigma)$-module. 
Since the discrete decomposability depends only on 
the triple $(\frak{g},\frak{g}^\sigma,\frak{q})$ of Lie algebras 
 and not on the Lie group $G$, 
our classification will be given in terms of the Lie algebras.

To pursue the classification,
 we prepare some further basic setups:
\begin{de}
\label{def:irrsp}
{\rm{
We say the pair $(\frak{g},\frak{g}^\sigma)$ is an 
{\it irreducible symmetric pair} if one of the following holds.
\begin{enumerate}
\item[(1)]
$\frak{g}$ is simple.
\item[(2)]
$\frak{g}'$ is simple and $\frak{g}\simeq\frak{g}'\oplus\frak{g}'$;
$\sigma$ acts by switching the factors.
\end{enumerate}
}}
\end{de}
Let $\frak{g}=\frak{g}'\oplus\frak{g}'$ and $\varphi$ 
a non-trivial automorphism of $\frak{g}'$. 
Then there is also a symmetric pair $(\frak{g},\frak{g}^\sigma)$
defined by the involution 
$\sigma(x,y):=(\varphi(y),\varphi^{-1}(x))$ for $x,y\in\frak{g}'$.
For the simplicity of the exposition, we exclude this case from 
the definition of irreducible pairs.
This does not lose any generality for our purpose 
because we have an isomorphism 
$A_{\frak{q}'_1\oplus\frak{q}'_2}(\lambda_1,\lambda_2)|_{\frak{g}^\sigma}
\simeq 
A_{\frak{q}'_1\oplus \varphi(\frak{q}'_2)}
(\lambda_1, (\varphi^*)^{-1}\lambda_2)|_{{\rm diag}(\frak{g}')}$
via the isomorphism
$\frak{g}^\sigma\simeq {\rm diag}(\frak{g}')$, 
$(x,\varphi^{-1}(x))\mapsto (x, x)$.
Here, $\frak{q}'_1,\frak{q}'_2$ are parabolic subalgebra of $\frak{g}'_\bb{C}$ 
and ${\rm diag}(\frak{g}')$ is the diagonal in 
$\frak{g}=\frak{g}'\oplus \frak{g}'$.
Therefore the discrete decomposability for 
the triple $(\frak{g},\frak{g}^{\sigma},\frak{q}'_1\oplus\frak{q}'_2)$ 
is equivalent to 
that for the triple 
$(\frak{g},{\rm diag}(\frak{g}'),\frak{q}'_1\oplus\varphi(\frak{q}'_2))$. 
We shall treat the latter case in Section \ref{sec:tensor}.

We should remark that our definition differs from 
the one in Berger \cite{ber}, where the pair $(\frak{g},\frak{g}^\sigma)$ 
was called irreducible if $\frak{g}^{-\sigma}$ is 
an irreducible $\frak{g}^{\sigma}$-module. 
For example, 
$(\frak{sl}(n,\bb{R}),\frak{sl}(m,\bb{R})\oplus\frak{sl}(n-m,\bb{R})
\oplus\bb{R})$ is 
an irreducible pair for the Definition \ref{def:irrsp}, 
while it is not for the definition of \cite{ber}.
Both definitions are the same for Riemannian symmetric pairs.

Any semisimple symmetric pair is isomorphic to the direct sum of irreducible 
symmetric pairs.  
In particular,
 branching problems of $A_\frak{q}(\lambda)$ 
 with respect to reductive symmetric pairs
 reduce to the case of irreducible symmetric pairs
 because any $\theta$-stable parabolic subalgebra ${\mathfrak{q}}$
 is obviously written as the direct sum
 of $\theta$-stable parabolic subalgebras 
 of each factor.  

To describe $\theta$-stable parabolic subalgebras
 of ${\mathfrak {g}}_{\mathbb{C}}$, 
 it is convenient to use the following convention:
\begin{de}
\label{def:qa}
{\rm{
We say that a parabolic subalgebra $\frak{q}$ of $\frak{g}_\bb{C}$ 
is given by a vector $a\in\sqrt{-1}\frak{k}$ if 
$\frak{q}$ is the sum of non-negative eigenspaces of $\ad(a)$.
}}
\end{de}
Then ${\mathfrak {q}}$ is a $\theta$-stable parabolic subalgebra
 with a Levi decomposition $\frak{q}=\frak{l}_{\mathbb{C}}+\frak{u}$,
 where $\frak{l}_{\mathbb{C}}$ and $\frak{u}$ are the sums
 of zero and positive eigenspaces of 
$\ad(a)$, respectively.
Note that any $\theta$-stable parabolic subalgebras are obtained in this way.  

Needless to say,
 the defining element $a$ of a $\theta$-stable parabolic subalgebra 
${\mathfrak {q}}$
 is not unique.
However, 
 we adopt this convention
 in our classification
 (Tables \ref{holparablist}, \ref{dslist} and \ref{isolist})
 because it is not hard
 to compute ${\mathfrak{q}}$
 and $L=N_G({\mathfrak{q}})$
 from the defining element $a$
 by using the root system.  

Replacing $\frak{q}$ by $\Ad(k)\frak{q}$ for $k\in K$ if necessary,
we restrict ourselves to consider the following setting.

\begin{set}
\label{setq}
{\rm

(1)
Suppose that $(\frak{g},\frak{g}^\sigma)$ 
is an irreducible symmetric pair and 
the involution $\sigma$ commutes with a Cartan involution $\theta$.
Fix a $\sigma$-stable Cartan subalgebra 
$\frak{t}=\frak{t}^{\sigma}+\frak{t}^{-\sigma}$ of $\frak{k}$ 
such that $\frak{t}^{-\sigma}$ is maximal abelian in $\frak{k}^{-\sigma}$.
Choose a positive system $\Delta^+(\frak{k}_\bb{C},\frak{t}_\bb{C})$
that is compatible with 
some positive system 
of the restricted root system 
$\Sigma^+(\frak{k}_\bb{C},\sqrt{-1}\frak{t}^{-\sigma})$.

\noindent (2)
Let $\frak{q}$ be a $\theta$-stable parabolic subalgebra 
of $\frak{g}_\bb{C}$. We assume that $\frak{q}$ is given by
a $\Delta^+(\frak{k}_\bb{C},\frak{t}_\bb{C})$-dominant vector
$a\in\sqrt{-1}\frak{t}$.
}
\end{set}

Since $\frak{t}$ is $\sigma$-stable, $\sigma$ acts on the 
complexification $\frak{t}_\bb{C}$ and also on 
the dual space $\frak{t}_\bb{C}^*$, which is denoted by 
the same letter $\sigma$.
We note
 that ${\mathfrak {p}}_{\mathbb{C}}$
 and the nilradical ${\mathfrak {u}}$ of ${\mathfrak{q}}$
 are ${\mathfrak {t}}_{\mathbb{C}}$-invariant subspaces.  
We write $\Delta({\mathfrak {p}}_{\mathbb{C}}, {\mathfrak {t}}_{\mathbb{C}})$, 
 $\Delta({\mathfrak {u}} \cap {\mathfrak {p}}_{\mathbb{C}}, {\mathfrak {t}}_{\mathbb{C}})$
 for the sets of the weights of ${\mathfrak {t}}_{\mathbb{C}}$
 in ${\mathfrak {p}}_{\mathbb{C}}$, 
 ${\mathfrak {u}} \cap {\mathfrak {p}}_{\mathbb{C}}$, 
respectively. 
Here is a summary on equivalent conditions 
for discretely decomposable restrictions of $A_{\mathfrak {q}}(\lambda)$
 with respect to reductive symmetric pairs.  
We shall use the condition (iii) for our classification
 of the triples $({\mathfrak {g}}, {\mathfrak {g}}^{\sigma}, {\mathfrak {q}})$.  

\begin{thm}
\label{crit}
In Setting \ref{setq}, 
the following eight conditions on the triple $(\mathfrak{g},\mathfrak{g}^\sigma,\mathfrak{q})$
are equivalent: 
\begin{enumerate}
\item[{\rm (i)}]
$A_\frak{q}(\lambda)$ is non-zero and 
discretely decomposable as a $(\frak{g}^\sigma,K^\sigma)$-module 
for some $\lambda$ in the weakly fair range.
\item[{\rm (i$'$)}]
$A_\frak{q}(\lambda)$ is discretely decomposable 
as a $(\frak{g}^\sigma,K^\sigma)$-module 
for any $\lambda$ in the weakly fair range.
\item[{\rm (ii)}]
$\bb{R}_+\langle\frak{u}\cap\frak{p}_\bb{C}\rangle
\cap\sqrt{-1}(\frak{t}^{-\sigma})^*=\{0\}$.
Here, we define 
\[\bb{R}_+\langle\frak{u}\cap\frak{p}_\bb{C}\rangle
:=\left\{\sum_{\alpha\in\Delta(\frak{u}\cap\frak{p}_\bb{C},\frak{t}_\bb{C})}
n_\alpha \alpha:n_\alpha\in\bb{R}_{\geq 0}\right\}.\]
\item[{\rm (ii$'$)}]
There exists $b\in\sqrt{-1}\frak{t}^{\sigma}$ such that 
$\langle 
{\rm pr}_+(\bb{R}_+\langle\frak{u}\cap\frak{p}_\bb{C}\rangle), 
b\rangle>0$,
where ${\rm pr}_+: \sqrt{-1} \frak{t}^* \to 
\sqrt{-1} (\frak{t}^{\sigma})^*$ is 
the restriction map.
\item[{\rm (iii)}]
$\sigma\alpha(a)\geq 0$ 
whenever 
$\alpha\in\Delta(\frak{p}_\bb{C},\frak{t}_\bb{C})$
 satisfies $\alpha(a)>0$.   
\item[{\rm (iii$'$)}]
$\sigma(\frak{u}\cap\frak{p}_\bb{C})\subset \frak{q}.$
\item[{\rm (iv)}]
Let ${\cal V}(A_\frak{q}(\lambda))$ be the associated variety of 
$A_\frak{q}(\lambda)$ and ${\rm pr}_+:\frak{g}_\bb{C}^*
\to (\frak{g}^\sigma_\bb{C})^*$ the 
restriction map.
Then ${\rm pr}_+{\cal V}(A_\frak{q}(\lambda))$ is contained 
in the nilpotent cone of $(\frak{g}^\sigma_\bb{C})^*$
for any $\lambda$ in the weakly fair range.
\item[{\rm (v)}]
Each $K^\sigma$-type occurs in $A_\frak{q}(\lambda)$ 
with finite multiplicity 
for any $\lambda$ in the weakly fair range.
\end{enumerate}
\end{thm}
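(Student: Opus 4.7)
My plan is to organize the eight conditions into three natural clusters---the representation-theoretic conditions (i), (i$'$), (iv), (v); the cone-theoretic conditions (ii), (ii$'$); and the combinatorial conditions (iii), (iii$'$) expressed in terms of the defining vector $a$---and then link the clusters through the associated variety of $A_{\mathfrak{q}}(\lambda)$. The backbone of the argument is an invocation of the first author's general discrete decomposability criterion from \cite{kob94,kob98i,kob98ii}: for any unitarizable $(\mathfrak{g},K)$-module $V$, discrete decomposability as a $(\mathfrak{g}^\sigma,K^\sigma)$-module is equivalent to $K^\sigma$-admissibility (finite multiplicity of each $K^\sigma$-type), and these are in turn equivalent to ${\rm pr}_+(\mathcal{V}(V))$ being contained in the nilpotent cone of $(\mathfrak{g}^\sigma_{\bb{C}})^*$. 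Specializing to $V=A_{\mathfrak{q}}(\lambda)$, whose associated variety is $\overline{{\rm Ad}^*(K_{\bb{C}})(\mathfrak{u}\cap\mathfrak{p}_{\bb{C}})}$ by Vogan and hence independent of $\lambda$ in the weakly fair range, this yields the equivalences among (i), (i$'$), (iv), (v).

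To bridge (iv) with the cone condition (ii), I would identify $\mathfrak{g}_{\bb{C}}^*\simeq\mathfrak{g}_{\bb{C}}$ via an invariant bilinear form and rephrase (iv) as the assertion that no element of ${\rm Ad}(K_{\bb{C}})\cdot(\mathfrak{u}\cap\mathfrak{p}_{\bb{C}})$ has a nonzero semisimple part when projected to $\mathfrak{g}^\sigma_{\bb{C}}$. Since $\sqrt{-1}\mathfrak{t}^{-\sigma}$ is maximal abelian in $\mathfrak{k}^{-\sigma}$, a normalization by $K^\sigma_{\bb{C}}$ combined with the compatibility of the positive systems fixed in Setting \ref{setq} reduces this to a statement about $\mathfrak{t}_{\bb{C}}$-weights of $\mathfrak{u}\cap\mathfrak{p}_{\bb{C}}$, giving (ii). The further equivalence (ii) $\Leftrightarrow$ (ii$'$) is the separating hyperplane theorem applied to the finitely generated convex cone ${\rm pr}_+(\bb{R}_+\langle\mathfrak{u}\cap\mathfrak{p}_{\bb{C}}\rangle)\subset\sqrt{-1}(\mathfrak{t}^\sigma)^*$.

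Finally I would link (ii) with (iii), (iii$'$) by elementary root-theoretic arguments. Since $\mathfrak{q}$ is the non-negative eigenspace of $\ad(a)$ and $\sigma\mathfrak{g}_\alpha=\mathfrak{g}_{\sigma\alpha}$, the equivalence (iii) $\Leftrightarrow$ (iii$'$) is immediate. For (iii) $\Rightarrow$ (ii): if $v=\sum n_\alpha\alpha$ lies in $\sqrt{-1}(\mathfrak{t}^{-\sigma})^*$ with $n_\alpha\geq 0$ and $\alpha\in\Delta(\mathfrak{u}\cap\mathfrak{p}_{\bb{C}},\mathfrak{t}_{\bb{C}})$, then $\sigma v=-v$ yields $\sum n_\alpha(\alpha+\sigma\alpha)=0$; evaluating at $a$ forces $n_\alpha=0$ for each $\alpha$, since $\alpha(a)>0$ and $\sigma\alpha(a)\geq 0$ by (iii) make each summand strictly positive otherwise. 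Conversely, if (iii) fails at some $\alpha$ with $\sigma\alpha(a)<0$, then $-\sigma\alpha\in\Delta(\mathfrak{u}\cap\mathfrak{p}_{\bb{C}},\mathfrak{t}_{\bb{C}})$ as well, and $\alpha-\sigma\alpha$ is a nonzero element of the cone lying in $\sqrt{-1}(\mathfrak{t}^{-\sigma})^*$, contradicting (ii).

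The main obstacle is the Kostant--Rallis-style reduction in the middle step: one must show that semisimple parts of elements of ${\rm Ad}(K_{\bb{C}})\cdot(\mathfrak{u}\cap\mathfrak{p}_{\bb{C}})$, after projection to $\mathfrak{g}^\sigma_{\bb{C}}$, can be normalized into $\sqrt{-1}\mathfrak{t}^{-\sigma}$ by the $K^\sigma_{\bb{C}}$-action, and that this normalization is compatible with the weight description of the cone. All the remaining implications follow from convex analysis or direct manipulations with the $\sigma$-action on roots.
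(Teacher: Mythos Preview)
Your overall strategy---invoke the criteria from \cite{kob94,kob98i,kob98ii} for the substantive implications and then verify directly that (ii$'$) and (iii) are reformulations of (ii) and (iii$'$)---matches the paper's proof, and your explicit arguments for (ii)\,$\Leftrightarrow$\,(ii$'$), (iii)\,$\Leftrightarrow$\,(iii$'$), and (ii)\,$\Leftrightarrow$\,(iii) are correct and more detailed than what the paper writes down.

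There is, however, a genuine gap in your first paragraph. You assert that for \emph{any} unitarizable $(\mathfrak{g},K)$-module $V$, discrete decomposability, $K^\sigma$-admissibility, and the associated-variety condition are mutually equivalent. The cited papers do not establish this. What is proved in general is only one direction in each pair: $K^\sigma$-admissibility $\Rightarrow$ discrete decomposability (for unitarizable $V$), and discrete decomposability $\Rightarrow$ $\mathrm{pr}_+(\mathcal{V}(V))$ nilpotent \cite{kob98ii}. The reverse implication (iv)\,$\Rightarrow$\,(v), which you need to declare your first cluster ``equivalent,'' is \emph{not} a general theorem; it is obtained for $A_{\mathfrak q}(\lambda)$ only by passing through the cone condition (ii) and then invoking the $A_{\mathfrak q}(\lambda)$-specific result of \cite{kob94} that (ii)\,$\Rightarrow$\,(v). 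In other words, the logical loop in the paper is
\[
(\mathrm{ii}) \Rightarrow (\mathrm{v}) \Rightarrow (\mathrm{i}') \Rightarrow (\mathrm{i}) \Rightarrow (\mathrm{iv}) \Rightarrow (\mathrm{iii}') \Rightarrow (\mathrm{ii}),
\]
and the cone condition (ii) is not a side branch but the hinge through which one must pass to return from (iv) to (v). Your clustering of (i), (i$'$), (iv), (v) as an autonomous equivalence class therefore does not stand on its own; once you reorganize the argument as the cycle above, the pieces you have already supplied (plus the citations) are enough.
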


If one of, and hence any of, 
 these equivalent conditions holds, 
we say that the triple $(\frak{g},\frak{g}^\sigma,\frak{q})$ satisfies 
the {\it discrete decomposability condition}.

\begin{proof}
The equivalence of (i), (i$'$), (ii), (iii$'$), (iv), and (v)
 was established in \cite{kob94, kob98i, kob98ii}.  
To be more precise,
 the implication (ii) $\Rightarrow$ (v) was proved in \cite{kob94}, 
 and an alternative proof based on micro-local analysis
 was given in \cite{kob98i}.  
The opposite direction (v) $\Rightarrow$ (i$'$) $\Rightarrow$ 
(i) $\Rightarrow$ (iv) 
$\Rightarrow$ (iii$'$) $\Rightarrow$ (ii) was proved in \cite{kob98ii}.  
The conditions (ii$'$) and (iii) are just reformulations of (ii) and (iii$'$), 
respectively.
\end{proof}

We end this section with a number of direct consequences of 
Theorem \ref{crit}, namely, 
one equivalent condition (Proposition \ref{ass}), 
two sufficient conditions 
(Propositions \ref{incl}, \ref{holdd}) 
and two necessary conditions (Propositions \ref{split}, \ref{highlow}) 
for the discrete decomposability of 
$A_\frak{q}(\lambda)$ as a $(\frak{g}^{\sigma},K^{\sigma})$-module.

Suppose that an involution $\sigma$ of $G$ commutes 
with a Cartan involution $\theta$. 
Then the composition $\theta\sigma$ becomes another involution of $G$.  
The symmetric pair $(\frak{g},\frak{g}^{\theta\sigma})$ 
is called the {\it associated pair} of $(\frak{g},\frak{g}^\sigma)$.

Since ${\sigma} ={\theta \sigma}$ on $\frak{t}$, 
we get from the condition (iii) in Theorem \ref{crit}
 the following proposition:
\begin{prop}
\label{ass}
For $\lambda$ in the weakly fair range, 
$A_\frak{q}(\lambda)$ is discretely decomposable as a 
$(\frak{g}^\sigma,K^\sigma)$-module if and only if 
it is discretely decomposable 
as a $(\frak{g}^{\theta\sigma},K^{\theta\sigma})$-module.
\end{prop}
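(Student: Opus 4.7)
The plan is to reduce the statement to the combinatorial condition (iii) of Theorem \ref{crit} and then check that this condition is literally the same for the pair $(\frak{g},\frak{g}^\sigma)$ as for its associated pair $(\frak{g},\frak{g}^{\theta\sigma})$. The hint offered in the paragraph preceding the proposition already indicates this route; my task is to fill in the routine verifications that Setting \ref{setq} applies simultaneously to both involutions, so that Theorem \ref{crit} can be invoked twice with the same data.

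First I would verify that Setting \ref{setq} can be chosen compatibly for both symmetric pairs. Since $\theta$ and $\sigma$ commute, so do $\theta$ and $\theta\sigma$, and $(\frak{g},\frak{g}^{\theta\sigma})$ is again a symmetric pair whose defining involution commutes with $\theta$. Because $\frak{t}\subset\frak{k}$ and $\theta$ acts as the identity on $\frak{k}$, we have $(\theta\sigma)|_{\frak{k}}=\sigma|_{\frak{k}}$; in particular $\frak{t}$ is $\theta\sigma$-stable with $\frak{t}^{\pm\sigma}=\frak{t}^{\pm\theta\sigma}$, and $\frak{t}^{-\sigma}$ is maximal abelian in $\frak{k}^{-\sigma}=\frak{k}^{-\theta\sigma}$. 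Consequently the restricted root systems $\Sigma(\frak{k}_\bb{C},\sqrt{-1}\frak{t}^{-\sigma})$ entering Setting \ref{setq} coincide, and a single positive system $\Delta^+(\frak{k}_\bb{C},\frak{t}_\bb{C})$ together with the same dominant element $a\in\sqrt{-1}\frak{t}$ provides valid Setting \ref{setq} data for both pairs.

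Next I would compare condition (iii) of Theorem \ref{crit} for the two pairs. For $\alpha\in\Delta(\frak{p}_\bb{C},\frak{t}_\bb{C})$ and $a\in\sqrt{-1}\frak{t}$, since $\theta$ fixes $\frak{t}$ pointwise,
\[
(\theta\sigma)\alpha(a)=\alpha\bigl((\theta\sigma)^{-1}a\bigr)=\alpha(\sigma\theta\,a)=\alpha(\sigma a)=\sigma\alpha(a).
\]
Thus the inequality $(\theta\sigma)\alpha(a)\geq 0$ whenever $\alpha(a)>0$ is literally the same assertion as $\sigma\alpha(a)\geq 0$ whenever $\alpha(a)>0$. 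In other words, condition (iii) for $(\frak{g},\frak{g}^\sigma,\frak{q})$ and for $(\frak{g},\frak{g}^{\theta\sigma},\frak{q})$ are identical.

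Finally, applying the equivalence (i)$\Leftrightarrow$(iii) of Theorem \ref{crit} to both triples yields the desired equivalence of discrete decomposability. There is no real obstacle here: once one records that $\theta$ is trivial on $\frak{t}$, the two instances of criterion (iii) collapse to the same combinatorial condition, and the proposition follows immediately.
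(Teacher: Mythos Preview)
Your proof is correct and takes essentially the same approach as the paper: the paper's one-line justification is ``Since $\sigma=\theta\sigma$ on $\frak{t}$, we get from the condition (iii) in Theorem \ref{crit} the following proposition,'' and you have spelled out precisely this argument, including the routine verification that Setting \ref{setq} holds simultaneously for both involutions.
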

The following proposition is a direct consequence of 
the condition (ii) in Theorem \ref{crit}:

\begin{prop}
\label{incl}
Let $\frak{q}_1$ and $\frak{q}_2$ be $\theta$-stable parabolic subalgebras
of $\frak{g}_\bb{C}$ such that $\frak{q}_1\subset\frak{q}_2$.
If $(\frak{g},\frak{g}^\sigma,\frak{q}_1)$ 
satisfies the discrete decomposability condition, 
then so does $(\frak{g},\frak{g}^\sigma,\frak{q}_2)$.
\end{prop}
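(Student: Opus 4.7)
The plan is to verify condition (ii) of Theorem \ref{crit} for the triple $(\mathfrak{g},\mathfrak{g}^\sigma,\mathfrak{q}_2)$ using condition (ii) for $(\mathfrak{g},\mathfrak{g}^\sigma,\mathfrak{q}_1)$. The strategy hinges on a single geometric observation: enlarging a $\theta$-stable parabolic subalgebra \emph{shrinks} its nilradical. Once this is in hand, the cone appearing in (ii) shrinks accordingly, and the condition is obviously inherited.

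Concretely, I would first reduce to a situation where $\mathfrak{q}_1$ and $\mathfrak{q}_2$ admit a common $\sigma$-stable Cartan subalgebra $\mathfrak{t}\subset\mathfrak{k}$ as in Setting \ref{setq}, replacing one of them by a $K$-conjugate if necessary; the discrete decomposability condition is $K$-invariant, so this costs nothing. I may then write $\mathfrak{q}_i$ as given by a vector $a_i\in\sqrt{-1}\,\mathfrak{t}$ in the sense of Definition \ref{def:qa}, with nilradical $\mathfrak{u}_i=\bigoplus_{\alpha(a_i)>0}\mathfrak{g}_\alpha$.

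Next I would establish $\mathfrak{u}_2\subset\mathfrak{u}_1$. If $\alpha\in\Delta(\mathfrak{g}_\mathbb{C},\mathfrak{t}_\mathbb{C})$ satisfies $\alpha(a_2)>0$, then $-\alpha\notin\mathfrak{q}_2$, hence $-\alpha\notin\mathfrak{q}_1$ (since $\mathfrak{q}_1\subset\mathfrak{q}_2$), which forces $\alpha(a_1)>0$, i.e.\ $\mathfrak{g}_\alpha\subset\mathfrak{u}_1$. Intersecting with $\mathfrak{p}_\mathbb{C}$ gives $\Delta(\mathfrak{u}_2\cap\mathfrak{p}_\mathbb{C},\mathfrak{t}_\mathbb{C})\subset\Delta(\mathfrak{u}_1\cap\mathfrak{p}_\mathbb{C},\mathfrak{t}_\mathbb{C})$, so
\[
\mathbb{R}_+\langle\mathfrak{u}_2\cap\mathfrak{p}_\mathbb{C}\rangle
\;\subset\;
\mathbb{R}_+\langle\mathfrak{u}_1\cap\mathfrak{p}_\mathbb{C}\rangle.
\]

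Finally, intersecting both sides with $\sqrt{-1}\,(\mathfrak{t}^{-\sigma})^*$: the hypothesis that $(\mathfrak{g},\mathfrak{g}^\sigma,\mathfrak{q}_1)$ satisfies (ii) says the larger cone meets $\sqrt{-1}\,(\mathfrak{t}^{-\sigma})^*$ only at $0$, hence the same is a fortiori true for the smaller cone, giving condition (ii) for $(\mathfrak{g},\mathfrak{g}^\sigma,\mathfrak{q}_2)$ and completing the proof via Theorem \ref{crit}. There is no real obstacle beyond the nilradical comparison $\mathfrak{u}_2\subset\mathfrak{u}_1$; the only subtle point is making sure a common Cartan and defining vectors are available, which is why I would flag the $K$-conjugacy reduction at the outset.
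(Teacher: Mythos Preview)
Your proof is correct and follows exactly the paper's approach, which simply states that the proposition is a direct consequence of condition (ii) in Theorem \ref{crit} --- precisely the cone-inclusion argument you spell out. One minor wording fix: when reducing to Setting \ref{setq} you should conjugate both $\mathfrak{q}_1$ and $\mathfrak{q}_2$ by the \emph{same} element of $K$ (not just ``one of them'') so that the inclusion $\mathfrak{q}_1\subset\mathfrak{q}_2$ is preserved.
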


Yet another easy consequence of Theorem \ref{crit}
 concerns the triples
 $({\mathfrak {g}}, {\mathfrak {g}}^{\sigma}, {\mathfrak {q}})$
 for {\it{holomorphic}} ${\mathfrak {q}}$ 
 (Definition \ref{def:holoparab})
 defined for a Hermitian Lie algebra ${\mathfrak {g}}$
 below: 

\begin{de}
{\rm 
Let ${\mathfrak {g}}={\mathfrak {k}}+{\mathfrak {p}}$
 be a real non-compact simple Lie algebra.  
We say $\frak{g}$ is 
of {\it Hermitian type} 
and the symmetric pair $(\frak{g},\frak{k})$ is a 
{\it Hermitian symmetric pair}
if the center $\frak{z}_K$ of $\frak{k}$ is one-dimensional.
}
\end{de}

If $\frak{g}$ is of Hermitian type, 
then $\frak{p}_\bb{C}$,
 regarded as a $K$-module by the adjoint action, 
decomposes into the direct sum 
 of two irreducible submodules, 
 say,  
$\frak{p}_\bb{C}=\frak{p}_+ + \frak{p}_-$.
Then the Riemannian symmetric space
$G/K$ becomes a Hermitian symmetric space
by choosing $\frak{p}_-$ as a holomorphic tangent space at the base point. 

\begin{de}
\label{def:holoparab}
{\rm 
Suppose that $\frak{g}$ is a simple Lie algebra 
of Hermitian type.
A $\theta$-stable parabolic subalgebra 
$\frak{q}$ of $\frak{g}_\bb{C}$ 
is said to be {\it holomorphic} (resp.\ {\it anti-holomorphic}) 
if $\frak{q}\supset\frak{p}_+$
(resp.\ $\frak{q}\supset\frak{p}_-$). 
}
\end{de}

See Table \ref{holparablist} for the conditions on a defining element $a$ 
for parabolic subalgebra ${\mathfrak {q}}$ 
to be holomorphic or anti-holomorphic.  

If a $\theta$-stable parabolic subalgebra $\frak{q}$ is holomorphic 
and if $A_\frak{q}(\lambda)$ is non-zero and irreducible
(in particular, if $\lambda$ is in the good range), then 
$A_\frak{q}(\lambda)$ is a lowest weight module 
with respect to a 
Borel subalgebra containing $\frak{p}_+$.
Similarly, if $\frak{q}$ is anti-holomorphic, 
$A_\frak{q}(\lambda)$ is a highest weight module. 
If $\frak{q}\cap\frak{p}_\bb{C}=\frak{p}_+$
(resp.\ $\frak{q}\cap\frak{p}_\bb{C}=\frak{p}_-$) 
and $\lambda$ is in the good range, then 
$A_\frak{q}(\lambda)$ is 
the underlying $(\frak{g},K)$-module of 
a holomorphic (resp.\ anti-holomorphic) discrete series 
representation of $G$.

\begin{de}
{\rm 
Suppose that $\frak{g}$ is a simple Lie algebra of Hermitian type, 
so the center $\frak{z}_K$ of $\frak{k}$ is one-dimensional.
We say a symmetric pair 
$(\frak{g},\frak{g}^\sigma)$ 
is of {\it holomorphic type}
if $\frak{z}_K\subset\frak{g}^\sigma$, 
or equivalently if $\sigma$ induces a holomorphic involution 
 on the Hermitian symmetric space $G/K$.
}
\end{de}

It follows immediately from $\frak{k}^\sigma=\frak{k}^{\theta\sigma}$
 that the pair $(\frak{g},\frak{g}^\sigma)$
 is of holomorphic type if and only if the associated pair 
 $(\frak{g},\frak{g}^{\theta\sigma})$ is of holomorphic type.
See Table \ref{holpairlist} for the classification
 of symmetric pairs $({\mathfrak {g}}, {\mathfrak {g}}^{\sigma})$
 of holomorphic type.  

\begin{ex}
{\rm 
Let $\frak{g}=\frak{su}(2,2)\simeq\frak{so}(4,2)$.
Suppose we are in Setting \ref{setq},
and retain the notation of Setting \ref{sumn} for 
$\frak{t}$ and $e_i$.
In particular, 
$\frak{q}$ is given by $a=a_1e_1+a_2e_2+a_3e_3+a_4e_4$ 
with $a_1\geq a_2$ and $a_3\geq a_4$.
Figure \ref{fig1} follows the notation in \cite{kob94}.
It shows 18 $\theta$-stable parabolic subalgebras of $\frak{g}_\bb{C}$, 
which form a complete set of representatives of 
$\frak{q}$ up to $K$-conjugacy and the equivalence relation 
among the $\theta$-stable parabolic subalgebras $\frak{q}_1\sim \frak{q}_2$ 
defined by a $(\frak{g},K)$-isomorphism 
$A_{\frak{q}_1}(0)\simeq A_{\frak{q}_2}(0)$.
The correspondence is: 
$X_1\leftrightarrow a_1 > a_2 > a_3 > a_4,$ 
$X_2\leftrightarrow a_1 > a_3 > a_2 > a_4,$ 
$X_3\leftrightarrow a_1 > a_3 > a_4 > a_2,$ 
$X_4\leftrightarrow a_3 > a_1 > a_2 > a_4,$ 
$X_5\leftrightarrow a_3 > a_1 > a_4 > a_2,$ 
$X_6\leftrightarrow a_3 > a_4 > a_1 > a_2,$ 
$Y_1\leftrightarrow a_1 > a_2 = a_3 > a_4,$ 
$Y_2\leftrightarrow a_1 > a_3 > a_2 = a_4,$ 
$Y_3\leftrightarrow a_1 = a_3 > a_2 > a_4,$ 
$Y_4\leftrightarrow a_3 > a_1 > a_2 = a_4,$ 
$Y_5\leftrightarrow a_1 = a_3 > a_4 > a_2,$ 
$Y_6\leftrightarrow a_3 > a_1 = a_4 > a_2,$ 
$Z_1\leftrightarrow a_1 > a_2 = a_3 = a_4,$ 
$Z_2\leftrightarrow a_1 = a_2 = a_3 > a_4,$ 
$Z_3\leftrightarrow a_3 > a_1 = a_2 = a_4,$ 
$Z_4\leftrightarrow a_1 = a_3 = a_4 > a_2,$ 
$W  \leftrightarrow a_1 = a_3 > a_2 = a_4,$ 
$U  \leftrightarrow a_1 = a_2 = a_3 = a_4.$ 
We see that $X_1,\dots,X_6$ yield $\theta$-stable Borel subalgebras and 
$X_1,X_6,Y_1,Y_6,Z_1,Z_2,Z_3,Z_4,U$ yield
holomorphic or anti-holomorphic parabolic subalgebras.
}
\end{ex}

\begin{figure}[H]
\centering
\setlength{\unitlength}{0.0004in}
\begingroup\makeatletter\ifx\SetFigFont\undefined%
\gdef\SetFigFont#1#2#3#4#5{%
  \reset@font\fontsize{#1}{#2pt}%
  \fontfamily{#3}\fontseries{#4}\fontshape{#5}%
  \selectfont}%
\fi\endgroup%
{\renewcommand{\dashlinestretch}{30}
\begin{picture}(6696,5761)(0,-10)
\put(1548,5384){\ellipse{680}{680}}
\put(2748,5384){\ellipse{680}{680}}
\put(3948,5384){\ellipse{680}{680}}
\put(5148,5384){\ellipse{680}{680}}
\put(648,943){\ellipse{680}{680}}
\put(348,5384){\ellipse{680}{680}}
\put(6348,5384){\ellipse{680}{680}}
\put(1623,2609){\blacken\ellipse{140}{140}}
\put(1623,2609){\ellipse{140}{140}}
\put(3348,2609){\blacken\ellipse{140}{140}}
\put(3348,2609){\ellipse{140}{140}}
\put(5073,2609){\blacken\ellipse{140}{140}}
\put(5073,2609){\ellipse{140}{140}}
\path(6348,4684)(6048,4159)(6648,4159)(6348,4684)
\path(948,3634)(648,3109)(1248,3109)(948,3634)
\path(2148,3634)(1848,3109)(2448,3109)(2148,3634)
\path(5748,3634)(5448,3109)(6048,3109)(5748,3634)
\path(1548,4159)(1056,3460)
\path(1548,4159)(1056,3460)
\path(411,4141)(840,3463)
\path(411,4141)(840,3463)
\path(522,4150)(2034,3454)
\path(522,4150)(2034,3454)
\path(2721,4187)(2253,3448)
\path(2721,4187)(2253,3448)
\path(2058,3100)(1703,2709)
\path(2058,3100)(1703,2709)
\path(1059,3106)(1530,2717)
\path(1059,3106)(1530,2717)
\path(1757,2551)(3237,1960)
\path(1757,2551)(3237,1960)
\path(2786,4182)(3225,3563)
\path(2786,4182)(3225,3563)
\path(3948,4159)(3468,3578)
\path(3948,4159)(3468,3578)
\path(5148,4159)(3561,3494)
\path(5148,4159)(3561,3494)
\path(6183,4156)(4671,3451)
\path(6183,4156)(4671,3451)
\path(4023,4159)(4467,3484)
\path(4023,4159)(4467,3484)
\path(4662,3094)(4988,2724)
\path(4662,3094)(4988,2724)
\path(6300,4144)(5856,3451)
\path(6300,4144)(5856,3451)
\path(5625,3097)(5190,2732)
\path(5625,3097)(5190,2732)
\path(4929,2566)(3459,1966)
\path(4929,2566)(3459,1966)
\path(4548,3634)(4248,3109)(4848,3109)(4548,3634)
\path(5223,4159)(5658,3466)
\path(5223,4159)(5658,3466)
\path(3348,3109)(3338,2762)
\path(3348,3109)(3338,2762)
\path(646,537)(346,12)(946,12)(646,537)
\path(1551,5041)(2756,4522)
\path(1551,5041)(2756,4522)
\path(2745,5044)(1623,4534)
\path(2745,5044)(1623,4534)
\path(5148,4534)(2745,5044)
\path(5148,4534)(2745,5044)
\path(4098,4534)(5133,5032)
\path(4098,4534)(5133,5032)
\path(5153,5039)(6267,4573)
\path(5153,5039)(6267,4573)
\path(6348,5734)(6048,5209)(6648,5209)(6348,5734)
\path(3348,2134)(3048,1609)(3648,1609)(3348,2134)
\path(3338,2484)(3348,2152)
\path(3338,2484)(3348,2152)
\path(6348,5034)(6348,4696)
\path(6348,5034)(6348,4696)
\path(5148,5032)(5148,4534)
\path(5148,5032)(5148,4534)
\path(3948,5038)(3948,4534)
\path(3948,5038)(3948,4534)
\path(2811,4517)(3936,5035)
\path(2811,4517)(3936,5035)
\path(1548,5041)(1548,4534)
\path(1548,5041)(1548,4534)
\path(348,5044)(348,4696)
\path(348,5044)(348,4696)
\path(429,4573)(1545,5044)
\path(429,4573)(1545,5044)
\path(348,4684)(48,4159)(648,4159)(348,4684)
\path(348,5734)(48,5209)(648,5209)(348,5734)
\path(1623,4159)(3144,3485)
\path(1623,4159)(3144,3485)
\put(1423,4259){\makebox(0,0)[lb]{\smash{{{\SetFigFont{8}{9.6}{\familydefault}{\mddefault}{\updefault}$Y_2$}}}}}
\put(2623,4259){\makebox(0,0)[lb]{\smash{{{\SetFigFont{8}{9.6}{\familydefault}{\mddefault}{\updefault}$Y_3$}}}}}
\put(3823,4259){\makebox(0,0)[lb]{\smash{{{\SetFigFont{8}{9.6}{\familydefault}{\mddefault}{\updefault}$Y_4$}}}}}
\put(5023,4259){\makebox(0,0)[lb]{\smash{{{\SetFigFont{8}{9.6}{\familydefault}{\mddefault}{\updefault}$Y_5$}}}}}
\put(6223,4259){\makebox(0,0)[lb]{\smash{{{\SetFigFont{8}{9.6}{\familydefault}{\mddefault}{\updefault}$Y_6$}}}}}
\put(823,3209){\makebox(0,0)[lb]{\smash{{{\SetFigFont{8}{9.6}{\familydefault}{\mddefault}{\updefault}$Z_1$}}}}}
\put(2023,3209){\makebox(0,0)[lb]{\smash{{{\SetFigFont{8}{9.6}{\familydefault}{\mddefault}{\updefault}$Z_2$}}}}}
\put(3223,3209){\makebox(0,0)[lb]{\smash{{{\SetFigFont{8}{9.6}{\familydefault}{\mddefault}{\updefault}$W$}}}}}
\put(4423,3209){\makebox(0,0)[lb]{\smash{{{\SetFigFont{8}{9.6}{\familydefault}{\mddefault}{\updefault}$Z_3$}}}}}
\put(5623,3209){\makebox(0,0)[lb]{\smash{{{\SetFigFont{8}{9.6}{\familydefault}{\mddefault}{\updefault}$Z_4$}}}}}
\put(1098,809){\makebox(0,0)[lb]{\smash{{{\SetFigFont{10}{12.0}{\familydefault}{\mddefault}{\updefault}: $\theta$-stable Borel subalgebra}}}}}
\put(1098,259){\makebox(0,0)[lb]{\smash{{{\SetFigFont{10}{12.0}{\familydefault}{\mddefault}{\updefault}: holomorphic or anti-holomorphic parabolic subalgebra}}}}}
\put(3223,1709){\makebox(0,0)[lb]{\smash{{{\SetFigFont{8}{9.6}{\familydefault}{\mddefault}{\updefault}$U$}}}}}
\put(223,4259){\makebox(0,0)[lb]{\smash{{{\SetFigFont{8}{9.6}{\familydefault}{\mddefault}{\updefault}$Y_1$}}}}}
\put(223,5309){\makebox(0,0)[lb]{\smash{{{\SetFigFont{8}{9.6}{\familydefault}{\mddefault}{\updefault}$\!X_1$}}}}}
\put(1423,5309){\makebox(0,0)[lb]{\smash{{{\SetFigFont{8}{9.6}{\familydefault}{\mddefault}{\updefault}$\!X_2$}}}}}
\put(2623,5309){\makebox(0,0)[lb]{\smash{{{\SetFigFont{8}{9.6}{\familydefault}{\mddefault}{\updefault}$\!X_3$}}}}}
\put(3823,5309){\makebox(0,0)[lb]{\smash{{{\SetFigFont{8}{9.6}{\familydefault}{\mddefault}{\updefault}$\!X_4$}}}}}
\put(5023,5309){\makebox(0,0)[lb]{\smash{{{\SetFigFont{8}{9.6}{\familydefault}{\mddefault}{\updefault}$X_5$}}}}}
\put(6223,5309){\makebox(0,0)[lb]{\smash{{{\SetFigFont{8}{9.6}{\familydefault}{\mddefault}{\updefault}$\!X_6$}}}}}
\end{picture}
}
\caption{}
\label{fig1}
\end{figure}

The following theorem can be deduced from 
\cite[Theorem 7.4]{kob98iii}.
For the convenience of the reader, we give an alternative proof by using 
the criterion, Theorem \ref{crit} (iii).
\begin{prop}
\label{holdd}
Suppose that a symmetric pair $(\frak{g},\frak{g}^\sigma)$ 
is of holomorphic type and 
a parabolic subalgebra $\frak{q}$ of $\frak{g}_\bb{C}$
is holomorphic or anti-holomorphic.
Then $A_\frak{q}(\lambda)$ is discretely decomposable 
as a $(\frak{g}^\sigma,K^\sigma)$-module for any $\lambda$ 
in the weakly fair range.
\end{prop}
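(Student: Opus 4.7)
My plan is to verify condition (iii) of Theorem \ref{crit} directly, namely that $\sigma\alpha(a)\geq 0$ for every $\alpha\in\Delta(\frak{p}_\bb{C},\frak{t}_\bb{C})$ satisfying $\alpha(a)>0$. The key observation is that the holomorphic-type hypothesis forces $\sigma$ to preserve each of $\frak{p}_+$ and $\frak{p}_-$ separately.

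More precisely, since $\frak{g}$ is of Hermitian type there exists a characteristic element $Z\in\frak{z}_K$ such that $\frak{p}_\pm$ are the $(\pm\sqrt{-1})$-eigenspaces of $\ad Z$ on $\frak{p}_\bb{C}$. The assumption that $(\frak{g},\frak{g}^\sigma)$ is of holomorphic type means $\frak{z}_K\subset\frak{g}^\sigma$, so $\sigma Z=Z$. Because $\sigma$ commutes with $\ad Z$ on $\frak{p}_\bb{C}$, it preserves the $\pm\sqrt{-1}$-eigenspaces; in other words $\sigma\frak{p}_+=\frak{p}_+$ and $\sigma\frak{p}_-=\frak{p}_-$. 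Consequently $\sigma$ acts on each of the sets $\Delta(\frak{p}_+,\frak{t}_\bb{C})$ and $\Delta(\frak{p}_-,\frak{t}_\bb{C})$.

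Now suppose $\frak{q}$ is holomorphic, i.e.\ $\frak{p}_+\subset\frak{q}$. Decomposing a defining vector $a\in\sqrt{-1}\frak{t}$ of $\frak{q}$ against the $\frak{t}_\bb{C}$-weights of $\frak{p}_\bb{C}$, the condition $\frak{p}_+\subset\frak{q}$ translates into $\alpha(a)\geq 0$ for every $\alpha\in\Delta(\frak{p}_+,\frak{t}_\bb{C})$. Since $\Delta(\frak{p}_-,\frak{t}_\bb{C})=-\Delta(\frak{p}_+,\frak{t}_\bb{C})$, this forces $\beta(a)\leq 0$ for all $\beta\in\Delta(\frak{p}_-,\frak{t}_\bb{C})$. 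Hence if $\alpha\in\Delta(\frak{p}_\bb{C},\frak{t}_\bb{C})$ satisfies $\alpha(a)>0$, then necessarily $\alpha\in\Delta(\frak{p}_+,\frak{t}_\bb{C})$, and by the previous paragraph $\sigma\alpha$ again lies in $\Delta(\frak{p}_+,\frak{t}_\bb{C})$, so $\sigma\alpha(a)\geq 0$. The anti-holomorphic case is verified in the same manner with the roles of $\frak{p}_+$ and $\frak{p}_-$ interchanged. Theorem \ref{crit} then gives the conclusion.

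There is no real obstacle here; the argument is essentially a three-line consequence of Theorem \ref{crit}(iii) once the stability $\sigma\frak{p}_\pm=\frak{p}_\pm$ is noted. The only point worth stating carefully is this stability, which is why I would structure the write-up around isolating the role of the central element $Z\in\frak{z}_K$ and the sign dichotomy between $\Delta(\frak{p}_+,\frak{t}_\bb{C})$ and $\Delta(\frak{p}_-,\frak{t}_\bb{C})$.
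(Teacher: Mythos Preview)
Your proof is correct and is essentially the same as the paper's: both verify condition (iii) of Theorem \ref{crit} by using a central element of $\frak{k}$ to show that $\sigma$ stabilizes $\Delta(\frak{p}_+,\frak{t}_\bb{C})$, then observe that holomorphicity of $\frak{q}$ forces any $\alpha$ with $\alpha(a)>0$ to lie in $\Delta(\frak{p}_+,\frak{t}_\bb{C})$, whence $\sigma\alpha(a)\geq 0$. The only cosmetic difference is that the paper works with $z\in\sqrt{-1}\frak{z}_K$ and the sign of $\alpha(z)$, while you phrase the same thing via the $\pm\sqrt{-1}$-eigenspaces of $\ad Z$.
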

\begin{proof}
Choose $z\in\sqrt{-1}\frak{z}_K$ such that 
$\Delta(\frak{p}_+,\frak{t}_\bb{C}) 
= \{\alpha\in\Delta(\frak{p}_\bb{C},\frak{t}_\bb{C}) 
: \alpha(z) > 0 \}$. 
We observe $\frak{u}\cap \frak{p}_\bb{C}\subset\frak{p}_+$ 
if the $\theta$-stable parabolic subalgebra 
$\frak{q}=\frak{l}_\bb{C}+\frak{u}$ 
is holomorphic. 
Thus, if $\alpha(a)>0$ for $\alpha\in\Delta(\frak{p}_\bb{C},\frak{t}_\bb{C})$, 
then $\alpha\in\Delta(\frak{p}_+,\frak{t}_\bb{C})$.
Since $\sigma(z)=z$, 
the $\sigma$-action on $\frak{t}_\bb{C}^*$ stabilizes 
$\Delta(\frak{p}_+,\frak{t}_\bb{C})$. 
Then $\sigma\alpha\in\Delta(\frak{p}_+,\frak{t}_\bb{C})$ and hence 
$\sigma\alpha(a)\geq 0$.
Thus, Theorem \ref{crit} (iii) is satisfied.
\end{proof}

Conversely, Theorem \ref{crit} gives a simple, 
necessary condition on a pair $(\frak{g},\frak{g}^\sigma)$ 
such that at least one infinite dimensional $A_\frak{q}(\lambda)$ 
is discretely decomposable
 as a $({\mathfrak {g}}^{\sigma}, K^{\sigma})$-module.
\begin{prop}
\label{split}
Let $\frak{g}$ be a simple non-compact Lie algebra and 
$\sigma$ an involution of $\frak{g}$ commuting with $\theta$.
Suppose that 
$\lambda$ is in the weakly fair range, $\frak{q}\neq\frak{g}_\bb{C}$, 
and $A_\frak{q}(\lambda)$ is non-zero.
If $A_\frak{q}(\lambda)$ is 
 discretely decomposable as a $(\frak{g}^\sigma,K^\sigma)$-module, 
 then $\frak{t}^{\sigma} \ne 0$, 
or equivalently $\frak{k}^\sigma+\sqrt{-1}\frak{k}^{-\sigma}$ 
is not a split real form of 
$\frak{k}_\bb{C}$.
\end{prop}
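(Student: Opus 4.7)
The plan is to prove the contrapositive: assuming $\frak{t}^\sigma = 0$, show that no non-trivial weakly fair $A_\frak{q}(\lambda)$ can be discretely decomposable as a $(\frak{g}^\sigma,K^\sigma)$-module. The hypothesis $\frak{t}^\sigma = 0$ forces $\sigma$ to act as $-\mathrm{id}$ on $\frak{t}$, hence $\bb{C}$-linearly on $\frak{t}_\bb{C}$ and dually on $\frak{t}_\bb{C}^*$. In particular $\sigma\alpha(a) = -\alpha(a)$ for every $\alpha \in \Delta(\frak{p}_\bb{C},\frak{t}_\bb{C})$; this is the crucial computation that will be fed into the criterion.

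Assuming discrete decomposability, I would invoke condition (iii) of Theorem \ref{crit}: if some $\alpha \in \Delta(\frak{p}_\bb{C},\frak{t}_\bb{C})$ satisfied $\alpha(a) > 0$, then we would need $\sigma\alpha(a) = -\alpha(a) \geq 0$, an impossibility. Therefore $\alpha(a) \leq 0$ for every such $\alpha$. Since the Killing form is non-degenerate on $\frak{p}_\bb{C}$ and pairs the $\alpha$-weight space with the $(-\alpha)$-weight space, the set $\Delta(\frak{p}_\bb{C},\frak{t}_\bb{C})$ is stable under $\alpha \mapsto -\alpha$, and we conclude $\alpha(a) = 0$ for all $\alpha \in \Delta(\frak{p}_\bb{C},\frak{t}_\bb{C})$.

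Consequently $\ad(a)$ annihilates $\frak{p}_\bb{C}$, i.e.\ $a$ centralizes $\frak{p}$. Since $\frak{g}$ is simple, the subalgebra $\frak{p} + [\frak{p},\frak{p}]$ is a non-zero ideal (it is stable under $[\frak{k},\cdot]$ by $[\frak{k},\frak{p}]\subset\frak{p}$ and Jacobi), so it equals all of $\frak{g}$, and thus $a$ lies in the center of $\frak{g}_\bb{C}$, forcing $a = 0$. But then $\frak{q}$ is defined by the zero vector, so $\frak{q} = \frak{g}_\bb{C}$, contradicting the hypothesis.

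For the equivalent reformulation, note that $\sqrt{-1}\frak{t}^{-\sigma}$ is a maximal abelian subspace of $\sqrt{-1}\frak{k}^{-\sigma}$, which is the non-compact factor in the Cartan decomposition of the real form $\frak{k}^\sigma + \sqrt{-1}\frak{k}^{-\sigma}$ of the reductive complex Lie algebra $\frak{k}_\bb{C}$. This real form is split if and only if this maximal abelian subspace has dimension equal to the complex rank of $\frak{k}_\bb{C}$, that is, $\dim \frak{t}^{-\sigma} = \dim \frak{t}$, which is equivalent to $\frak{t}^\sigma = 0$. I do not anticipate a genuine obstacle in any of these steps; the whole proposition reduces to a short deduction from condition (iii) of Theorem \ref{crit} together with the negation-symmetry of $\frak{t}_\bb{C}$-weights on $\frak{p}_\bb{C}$ and the simplicity of $\frak{g}$.
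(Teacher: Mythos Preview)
Your proof is correct and follows essentially the same route as the paper: both argue the contrapositive, use $\frak{t}^\sigma=0$ to get $\sigma\alpha=-\alpha$ on $\frak{t}_\bb{C}^*$, invoke condition (iii) of Theorem~\ref{crit} to rule out any $\alpha(a)>0$, and then use simplicity of $\frak{g}$ to conclude $\frak{q}=\frak{g}_\bb{C}$. The only cosmetic difference is that the paper phrases the last step as ``$\frak{p}_\bb{C}\subset\frak{q}$ forces $\frak{q}=\frak{g}_\bb{C}$'' while you argue that $a$ centralizes $\frak{g}$ and hence $a=0$; both rely on $[\frak{p},\frak{p}]=\frak{k}$ for simple $\frak{g}$.
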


\begin{proof}
Suppose $\frak{t}^{\sigma}=0$.  
Then $\sigma$ acts by $-1$ on $\frak{t}$ and hence on 
$\Delta(\frak{p}_\bb{C},\frak{t}_\bb{C})$.
Therefore if $\alpha(a)>0$
for some $\alpha\in\Delta(\frak{p}_\bb{C}, \frak{t}_\bb{C})$, 
then $\sigma\alpha(a)<0$.
By Theorem \ref{crit}, 
$A_\frak{q}(\lambda)$ is 
not discretely decomposable.
If $\alpha(a)=0$
for all $\alpha\in\Delta(\frak{p}_\bb{C},\frak{t}_\bb{C})$, 
then $\frak{p}_\bb{C}\subset\frak{q}$. 
Therefore $\frak{q}$ must coincide with $\frak{g}_\bb{C}$, 
which is not the case.

Finally, we note that $\frak{k}^\sigma+\sqrt{-1}\frak{k}^{-\sigma}$ is a 
real form of $\frak{k}_\bb{C}$, where $\sigma$ acts as 
a Cartan involution.
Thus, we see from Setting \ref{setq} (1) that $\frak{t}^\sigma=0$ 
if and only if $\frak{k}^\sigma+\sqrt{-1}\frak{k}^{-\sigma}$ is 
a split real form of $\frak{k}_\bb{C}$.
\end{proof}

The following proposition presents also 
 a necessary condition 
for $A_\frak{q}(\lambda)$ to be discretely decomposable, 
which is stronger than 
the one in Proposition \ref{split}.
Let $\alpha_0$ be the highest weight of the irreducible representation 
of $\frak{k}_\bb{C}$ on $\frak{p}_\bb{C}$ (if $\frak{g}$ is not of Hermitian) 
or on $\frak{p}_+$ (if $\frak{g}$ is of Hermitian).

\begin{prop}
\label{highlow}
Let $\frak{g}$ be a simple non-compact Lie algebra and 
$\sigma$ an involution of $\frak{g}$ commuting with $\theta$.
Suppose that $\frak{q}\neq\frak{g}_\bb{C}$, 
$\lambda$ is in the weakly fair range, 
and $A_\frak{q}(\lambda)$ is non-zero. 
If one of the following three assumptions hold, 
then $A_\frak{q}(\lambda)$ is not discretely decomposable 
as a $(\frak{g}^\sigma,K^\sigma)$-module. 
\begin{enumerate}
\item[$(1)$]
$\frak{g}$ is not of Hermitian type 
and $-\sigma\alpha_0$ is $\Delta^+(\frak{k}_\bb{C},\frak{t}_\bb{C})$-dominant. 
\item[$(2)$]
$\frak{g}$ is of Hermitian type, 
$(\frak{g},\frak{g}^{\sigma})$ is not of holomorphic type, 
and $-\sigma\alpha_0$ is $\Delta^+(\frak{k}_\bb{C},\frak{t}_\bb{C})$-dominant. 
\item[$(3)$]
$\frak{g}$ is of Hermitian type, 
$(\frak{g},\frak{g}^{\sigma})$ is of holomorphic type, 
$-\sigma\alpha_0$ is $\Delta^+(\frak{k}_\bb{C},\frak{t}_\bb{C})$-dominant, 
and $\frak{q}$ is neither holomorphic nor anti-holomorphic.
\end{enumerate}
\end{prop}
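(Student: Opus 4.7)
The plan is to verify that condition (iii) of Theorem \ref{crit} fails, by exhibiting a weight $\alpha\in\Delta(\frak{p}_\bb{C},\frak{t}_\bb{C})$ with $\alpha(a)>0$ and $\sigma\alpha(a)<0$. The natural candidate is $\alpha=\alpha_0$.

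First I would verify $\alpha_0(a)>0$. Since $\alpha_0$ is a dominant weight of $\frak{k}_\bb{C}$ and $a$ is $\Delta^+(\frak{k}_\bb{C},\frak{t}_\bb{C})$-dominant, we automatically have $\alpha_0(a)\geq 0$. If $\alpha_0(a)=0$, then every weight $\mu$ of $\frak{p}_\bb{C}$ (case 1) or of $\frak{p}_+$ (cases 2, 3), being of the form $\alpha_0-\sum n_i\beta_i$ with $n_i\geq 0$ and $\beta_i$ simple, satisfies $\mu(a)\leq 0$. Combined with the $\theta$-stability of $\frak{q}$ and $\theta|_{\frak{p}_\bb{C}}=-1$, this forces $\frak{p}_\bb{C}\subset\frak{l}_\bb{C}$ (case 1) or $\frak{p}_+\subset\frak{l}_\bb{C}$ (cases 2, 3). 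In case 1, $\frak{g}=\frak{p}+[\frak{p},\frak{p}]$ then gives $\frak{l}_\bb{C}=\frak{g}_\bb{C}$, contradicting $\frak{q}\neq\frak{g}_\bb{C}$. In case 2, the weights of $\frak{p}_-$ are the negatives of those of $\frak{p}_+$ and thus also vanish at $a$, yielding $\frak{p}_\bb{C}\subset\frak{l}_\bb{C}=\frak{g}_\bb{C}$, contradiction. In case 3, $\frak{p}_+\subset\frak{l}_\bb{C}\subset\frak{q}$ makes $\frak{q}$ holomorphic, contradicting the hypothesis.

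Second I would verify $\sigma\alpha_0(a)<0$. Dominance of $-\sigma\alpha_0$ (by hypothesis) and of $a$ gives $\sigma\alpha_0(a)\leq 0$, while $\sigma\alpha_0\neq\alpha_0$ since equality would make $-\alpha_0$ dominant and force $\alpha_0=0$. Moreover, $-\sigma\alpha_0\in\Delta(\frak{p}_\bb{C},\frak{t}_\bb{C})$ in every case: by self-duality of $\frak{p}_\bb{C}$ in case 1, by $\sigma$ swapping $\frak{p}_\pm$ in case 2, and by $\sigma$ preserving $\frak{p}_+$ (so $-\sigma\alpha_0\in\Delta(\frak{p}_-,\frak{t}_\bb{C})$) in case 3. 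For the strict inequality I would rewrite the task via condition (ii) of Theorem \ref{crit} with the vector $v=\alpha_0+(-\sigma\alpha_0)=\alpha_0-\sigma\alpha_0$: this $v$ is non-zero and lies in $\sqrt{-1}(\frak{t}^{-\sigma})^*$, and when both $\alpha_0$ and $-\sigma\alpha_0$ are in $\Delta(\frak{u}\cap\frak{p}_\bb{C},\frak{t}_\bb{C})$, $v$ becomes a non-trivial element of $\bb{R}_+\langle\frak{u}\cap\frak{p}_\bb{C}\rangle\cap\sqrt{-1}(\frak{t}^{-\sigma})^*$, witnessing the failure of (ii).

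The main obstacle will be handling the degenerate configuration $(-\sigma\alpha_0)(a)=0$, in which case the $-\sigma\alpha_0$-summand of $v$ above sits in $\frak{l}_\bb{C}$ rather than in $\frak{u}$. Resolving this requires a finer analysis: one exploits the decomposition $\alpha_0-\sigma\alpha_0=\sum m_i\beta_i$ with $m_i\geq 0$, which evaluates to $\alpha_0(a)>0$ at $a$, so some $m_{i_0}>0$ and $\beta_{i_0}(a)>0$. One then seeks a suitably chosen weight of $\frak{p}_\bb{C}$ (typically of the form $\alpha_0-k\beta_{i_0}$ for a small positive integer $k$, or obtained by $W_K$-conjugation from $\alpha_0$) that witnesses the failure of (iii). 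The compatibility of the positive system with the restricted root system in Setting \ref{setq}(1) forces non-$\sigma$-fixed positive simple roots to be sent to negative roots under $\sigma$, and this constraint guides the identification of the correct witness.
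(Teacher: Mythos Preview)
Your approach misses the single observation that makes the proof immediate: the weight $-\sigma\alpha_0$ is not just dominant, it is an \emph{extremal} weight of the relevant $\frak{k}_{\bb C}$-module. Indeed $\sigma$ preserves $\frak{t}$, so it permutes weight spaces of $\frak{p}_{\bb C}$ and preserves their dimensions; since $\alpha_0$ has multiplicity one, so does $\sigma\alpha_0$. In case~(1) self-duality of $\frak{p}_{\bb C}$ then makes $-\sigma\alpha_0$ an extremal weight of $\frak{p}_{\bb C}$; being dominant forces $-\sigma\alpha_0=\alpha_0$, i.e.\ $\sigma\alpha_0=-\alpha_0$, so $\sigma\alpha_0(a)=-\alpha_0(a)<0$ and you are done. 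In case~(2) the non-holomorphic hypothesis means $\sigma$ swaps $\frak{p}_\pm$, so again $-\sigma\alpha_0$ is extremal in $\frak{p}_+$ and equals $\alpha_0$. In case~(3) $\sigma$ preserves $\frak{p}_+$, so $\sigma\alpha_0$ is extremal in $\frak{p}_+$; since $-\sigma\alpha_0$ is dominant, $\sigma\alpha_0$ is the \emph{lowest} weight of $\frak{p}_+$, whence $\sigma\alpha_0(a)\geq 0$ would force $\frak{p}_+\subset\frak{q}$ (holomorphic), a contradiction. The ``main obstacle'' you worry about simply does not arise.

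There are also genuine errors in your Step~1 for the Hermitian cases. The implication ``$\alpha_0$ dominant and $a$ dominant $\Rightarrow\alpha_0(a)\geq 0$'' uses that the closed Weyl chamber is self-dual, which holds only when $\frak{k}_{\bb C}$ is semisimple; in cases~(2) and~(3) the center $\frak{z}_K$ is one-dimensional and $\alpha_0(a)$ can be negative. Moreover, in case~(2) your claim that $\alpha_0(a)=0$ forces all weights of $\frak{p}_{\bb C}$ to vanish on $a$ is wrong: you only get $\mu(a)\leq 0$ for $\mu\in\Delta(\frak{p}_+,\frak{t}_{\bb C})$ and $\mu(a)\geq 0$ for $\mu\in\Delta(\frak{p}_-,\frak{t}_{\bb C})$. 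The paper instead treats case~(2) by also invoking the highest weight $\alpha_0'$ of $\frak{p}_-$ and showing that at least one of $\alpha_0(a)$, $\alpha_0'(a)$ is strictly positive. Likewise in case~(3), $\alpha_0(a)=0$ only yields $\frak{p}_+\subset\bar{\frak{q}}$ (equivalently $\frak{p}_-\subset\frak{q}$, i.e.\ $\frak{q}$ anti-holomorphic), not $\frak{p}_+\subset\frak{l}_{\bb C}$ as you wrote.
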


\begin{proof}
We assume that the parabolic subalgebra $\frak{q}$ is given by 
a $\Delta^+(\frak{k}_\bb{C},\frak{t}_\bb{C})$-dominant vector $a$.

(1): 
Since $-\sigma\alpha_0$ is an extremal weight of 
$\Delta(\frak{p}_\bb{C},\frak{t}_\bb{C})$ and  
$-\sigma\alpha_0$ is dominant, 
$-\sigma\alpha_0$ is the highest weight of $\frak{p}_\bb{C}$ and 
hence $-\sigma\alpha_0=\alpha_0$.
If $\alpha_0(a)\leq 0$, then $\alpha(a)\leq 0$ for all 
$\alpha\in \Delta(\frak{p}_\bb{C},\frak{t}_\bb{C})$. 
Hence $\alpha(a)=0$ for all 
$\alpha\in \Delta(\frak{p}_\bb{C},\frak{t}_\bb{C})$ and then 
$\frak{q}=\frak{g}_\bb{C}$,
contradicting our assumption.
We therefore have $\alpha_0(a)>0$ and 
$\sigma\alpha_0(a)=-\alpha_0(a)<0$ so the condition (iii) 
in Theorem \ref{crit} fails.

(2): 
Similarly to the proof of the case (1), 
$-\sigma\alpha_0$ must be the highest weight of $\frak{p}_+$ and 
hence $-\sigma\alpha_0=\alpha_0$.
Let $\alpha'_0$ be the highest weight of $\frak{p}_-$. 
Then $-\sigma\alpha'_0$ is dominant and hence $-\sigma\alpha'_0=\alpha'_0$.
If $\alpha_0(a)\leq 0$ and $\alpha'_0(a)\leq 0$, 
then $\alpha(a)\leq 0$ for all 
$\alpha\in \Delta(\frak{p}_\bb{C},\frak{t}_\bb{C})$. 
This implies 
$\frak{q}=\frak{g}_\bb{C}$,
contradicting our assumption.
Hence we must have $\alpha_0(a)>0$ or 
$\alpha'_0(a)> 0$.
If $\alpha_0(a)>0$, then $\sigma\alpha_0(a)<0$ so
the condition (iii) in Theorem \ref{crit} fails.
Similarly for the case $\alpha'_0(a)>0$.

(3): 
Similarly to the proof of the case (1), 
$\sigma\alpha_0$ is the lowest weight of $\frak{p}_+$.
If $\alpha_0(a)\leq 0$, then $\alpha(a)\leq 0$ for all 
$\alpha\in\Delta(\frak{p}_+, \frak{t}_\bb{C})$. 
This implies that 
$\frak{q}$ is anti-holomorphic, contradicting our assumption.
In the same way, $\sigma\alpha_0(a)\geq 0$ implies that $\frak{q}$ is 
holomorphic, a contradiction.
Therefore $\alpha_0(a)> 0$ and 
$\sigma\alpha_0(a)<0$ so the condition (iii) in 
Theorem \ref{crit} fails.
\end{proof}

The key assumption of Proposition \ref{highlow} is 
that $-\sigma\alpha_0$ is dominant.
In order to give a simple criterion to verify this, 
we consider the Satake diagram of 
the reductive Lie algebra 
$\frak{k}^{\sigma}+\sqrt{-1}\frak{k}^{-\sigma}$, 
which is a real form of $\frak{k}_\bb{C}$ 
(see \cite[Chapter X]{hel} for the Satake diagram).
Each vertex is associated to a simple root of 
$\Delta^+(\frak{k}_\bb{C},\frak{t}_\bb{C})$.
Then we add a vertex $\star$, indicating the 
highest weight $\alpha_0 (\in\frak{t}_\bb{C}^*)$ of 
$\frak{p}_\bb{C}$ or $\frak{p}_+$.
We connect this new vertex to the vertex associated to $\alpha_i$ 
if $\langle \alpha_0,\alpha_i\rangle >0$.
We can immediately tell whether $-\sigma\alpha_0$ is dominant 
from this diagram: 

\begin{prop}
\label{dom}
$-\sigma\alpha_0$ is $\Delta^+(\frak{k}_\bb{C},\frak{t}_\bb{C})$-dominant 
if and only if no black circle is connected to the new vertex $\star$.
\end{prop}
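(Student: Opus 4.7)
The plan is to check the dominance condition $\langle -\sigma\alpha_0,\alpha_i\rangle \ge 0$ one simple root at a time, splitting into black and white cases, and to use just two structural facts about the Satake diagram of $\frak{k}^\sigma+\sqrt{-1}\frak{k}^{-\sigma}$: (a) the black simple roots are exactly those $\alpha_i\in\Delta^+(\frak{k}_\bb{C},\frak{t}_\bb{C})$ with $\sigma\alpha_i=\alpha_i$ (equivalently, $\alpha_i|_{\frak{t}^{-\sigma}}=0$), and (b) the compatibility of positive systems in Setting~\ref{setq}(1) makes $-\sigma$ stabilize $\Delta^+(\frak{k}_\bb{C},\frak{t}_\bb{C})$, so for any simple root $\alpha_i$ one has $-\sigma\alpha_i=\sum_j n_j\alpha_j$ with $n_j\in\bb{Z}_{\ge 0}$. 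Using $\sigma$-invariance of the bilinear form (the Killing form is an $\sigma$-invariant, and so is its dualization to $\frak{t}_\bb{C}^*$), I rewrite $\langle -\sigma\alpha_0,\alpha_i\rangle = -\langle \alpha_0,\sigma\alpha_i\rangle$.

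A basic fact I will use repeatedly is that $\alpha_0$, being the highest weight of the irreducible $\frak{k}_\bb{C}$-module $\frak{p}_\bb{C}$ (or $\frak{p}_+$), is itself $\Delta^+(\frak{k}_\bb{C},\frak{t}_\bb{C})$-dominant, so $\langle \alpha_0,\alpha_j\rangle\ge 0$ for every simple root $\alpha_j$. By the construction of the extended Satake diagram, the absence of an edge between $\star$ and $\alpha_j$ is then equivalent to the equality $\langle \alpha_0,\alpha_j\rangle=0$.

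I then handle the white case first: for a white simple root $\alpha_i$, writing $\sigma\alpha_i=-\sum_j n_j\alpha_j$ with $n_j\ge 0$ gives
\[
\langle -\sigma\alpha_0,\alpha_i\rangle = \sum_j n_j\langle \alpha_0,\alpha_j\rangle \ge 0,
\]
so the inequality is automatic and puts no condition on the diagram. In the black case, $\sigma\alpha_i=\alpha_i$, so
\[
\langle -\sigma\alpha_0,\alpha_i\rangle = -\langle \alpha_0,\alpha_i\rangle,
\]
which is $\ge 0$ precisely when $\langle \alpha_0,\alpha_i\rangle\le 0$; combined with the dominance of $\alpha_0$ this forces $\langle \alpha_0,\alpha_i\rangle=0$, i.e., $\alpha_i$ is not joined to $\star$. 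Conversely, if no black vertex is joined to $\star$, both cases above give $\langle -\sigma\alpha_0,\alpha_i\rangle\ge 0$ for every simple $\alpha_i$, so $-\sigma\alpha_0$ is dominant.

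The main obstacle is conceptual rather than technical: one has to pin down that the Satake-diagram conventions used here really are those of the real form $\frak{k}^\sigma+\sqrt{-1}\frak{k}^{-\sigma}$ with Cartan involution $\sigma$ and maximally split Cartan subalgebra $\frak{t}^\sigma+\sqrt{-1}\frak{t}^{-\sigma}$, so that fact (a) above holds. Once that identification is in place, the proof reduces to the two short computations of $\langle -\sigma\alpha_0,\alpha_i\rangle$ displayed above, with the dominance of $\alpha_0$ providing the required one-sided inequality.
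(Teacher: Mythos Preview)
Your argument is correct and follows essentially the same approach as the paper: both use $\sigma$-invariance of the form to write $\langle -\sigma\alpha_0,\alpha_i\rangle = -\langle\alpha_0,\sigma\alpha_i\rangle$ and split according to whether $\sigma\alpha_i=\alpha_i$ (black) or $-\sigma\alpha_i\in\Delta^+$; the paper checks this for all positive roots, while you more economically check only the simple ones. One small correction: your claim (b) that $-\sigma$ stabilizes $\Delta^+(\frak{k}_\bb{C},\frak{t}_\bb{C})$ is false as stated---black positive roots are sent into $-\Delta^+$---but since you only invoke it for white simple roots, where $-\sigma\alpha_i\in\Delta^+$ does hold, the proof is unaffected.
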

\begin{proof}
Write $\Delta^+=\Delta^+(\frak{k}_\bb{C},\frak{t}_\bb{C})$ for simplicity.
Suppose that the vertex $\star$ is connected to the black circle 
associated to a simple root $\alpha_i$.
Then 
$\langle -\sigma\alpha_0,\alpha_i \rangle
=-\langle \alpha_0,\sigma\alpha_i \rangle
=-\langle \alpha_0,\alpha_i \rangle<0$ and 
hence $-\sigma\alpha_0$ is not 
$\Delta^+$-dominant.

Conversely, assume that there is no black circle 
connected to the vertex $\star$.
Suppose that $\alpha\in -\sigma\Delta^+$. 
Then $-\sigma\alpha\in \Delta^+$ and hence 
$\langle -\sigma\alpha_0, \alpha\rangle
=\langle \alpha_0, -\sigma\alpha\rangle\geq 0$.
Suppose that $\alpha\in\Delta^+\setminus-\sigma\Delta^+$.
Since $\Delta^+$ is compatible with 
a positive restricted root system 
$\Sigma^+(\frak{k}_\bb{C},\sqrt{-1}\frak{t}^{-\sigma})$, 
it follows that 
$\sigma\alpha=\alpha$ and $\alpha$ can be written as a linear sum of 
roots associated to black circles.
Our assumption implies that $\alpha_0$ is orthogonal to 
any roots associated to black circle and hence orthogonal to $\alpha$.
Thus, $\langle -\sigma\alpha_0,\alpha\rangle\geq 0$ 
for all $\alpha\in \Delta^+$.
\end{proof}

Owing to Proposition \ref{dom}, 
we can classify all the pairs $(\frak{g},\frak{g}^{\sigma})$ such that 
$-\sigma\alpha_0$ is $\Delta^+(\frak{k}_\bb{C},\frak{t}_\bb{C})$-dominant
and $\frak{t}^{\sigma}\neq 0$.
See Appendix \ref{sec:diagram}
for the list of the diagrams for all such pairs.

%%%%%%%%%%%%%%%%%%%%%%%%%%%%%%%%%%%%%%%%%%%%%%%%%%%%%

\section{Discretely decomposable tensor product}
\label{sec:tensor}

The tensor product of two representations is a special case of 
the restriction with respect to a symmetric pair, namely, 
it is regarded as the restriction of an outer tensor product 
representation of the direct sum 
${\mathfrak {g}}={\mathfrak {g}}' \oplus {\mathfrak {g}}'$, 
when restricted to the subalgebra $\frak{g}^{\sigma}:={\rm diag}(\frak{g}')$.
In this section we discuss when the tensor product
of $(\frak{g}',K')$-modules 
$A_{\frak{q}'_1}(\lambda_1)\otimes A_{\frak{q}'_2}(\lambda_2)$ 
decomposes discretely. 
This is a branching problem of the $(\frak{g},K)$-module 
$A_\frak{q}(\lambda)$ with respect to 
$(\frak{g}^{\sigma},K^{\sigma})
:=({\rm diag}(\frak{g}'), {\rm diag}(K'))$,
 where 
$K=K'\times K'$, 
$
{\frak {q}}={\frak {q}'_1}\oplus {\frak {q}'_2}
$
 and $\lambda=(\lambda_1, \lambda_2)$.

\begin{thm}
\label{tensor}
Let $\frak{g}'$ be a non-compact simple Lie algebra.
Let $\frak{q}'_1$ and $\frak{q}'_2$ be $\theta$-stable parabolic 
subalgebras of $\frak{g}'_\bb{C}$, not equal to $\frak{g}'_\bb{C}$.
Then the following three conditions on $\frak{q}'_1$ and $\frak{q}'_2$ 
are equivalent.
\begin{itemize}
\item[$({\rm i})$]
The tensor product 
$A_{\frak{q}'_1}(\lambda_1)\otimes A_{\frak{q}'_2}(\lambda_2)$
is non-zero and 
discretely decomposable as a $(\frak{g}',K')$-module
for some $\lambda_1$ and $\lambda_2$
in the weakly fair range.
\item[$({\rm i}')$]
The tensor product 
$A_{\frak{q}'_1}(\lambda_1)\otimes A_{\frak{q}'_2}(\lambda_2)$
is discretely decomposable as a $(\frak{g}',K')$-module
for any $\lambda_1$ and $\lambda_2$
in the weakly fair range.
\item[$({\rm ii})$]
$\frak{g}'$ is of 
Hermitian type and 
both $\frak{q}'_1$ and $\frak{q}'_2$ are simultaneously holomorphic or 
anti-holomorphic. 
\end{itemize}
\end{thm}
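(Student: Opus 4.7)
The plan is to apply Theorem \ref{crit} to the irreducible symmetric pair $(\mathfrak{g}' \oplus \mathfrak{g}', \mathrm{diag}(\mathfrak{g}'))$ of Definition \ref{def:irrsp}(2), using that $A_{\mathfrak{q}'_1}(\lambda_1) \otimes A_{\mathfrak{q}'_2}(\lambda_2) = A_{\mathfrak{q}'_1 \oplus \mathfrak{q}'_2}(\lambda_1, \lambda_2)$ as a $(\mathfrak{g}' \oplus \mathfrak{g}', K' \times K')$-module, and that its underlying diagonal $(\mathfrak{g}', K')$-structure is exactly the restriction to $\mathrm{diag}(\mathfrak{g}')$. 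Since condition (iii) of Theorem \ref{crit} depends only on $\mathfrak{q}$ and not on $\lambda$, the equivalence (i) $\Leftrightarrow$ (i$'$) is immediate, and it remains to prove (i) $\Leftrightarrow$ (ii) via (iii).

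To unwind (iii), I take $\mathfrak{t} = \mathfrak{t}' \oplus \mathfrak{t}'$ with $\sigma$ the swap involution and choose $\Delta^+(\mathfrak{k}_\bb{C}, \mathfrak{t}_\bb{C})$ compatible with a positive restricted root system on $\mathfrak{t}^{-\sigma}$. The compatibility clause in Setting \ref{setq} then forces the defining element of $\mathfrak{q}'_1 \oplus \mathfrak{q}'_2$ to take the form $a=(a_1,a_2)$ with $a_1$ a $\Delta^+(\mathfrak{k}'_\bb{C}, \mathfrak{t}'_\bb{C})$-dominant vector and $a_2$ an anti-dominant vector. The $\mathfrak{t}_\bb{C}$-weights on $\mathfrak{p}_\bb{C} = \mathfrak{p}'_\bb{C} \oplus \mathfrak{p}'_\bb{C}$ are $(\beta,0)$ and $(0,\beta)$ for $\beta \in \Delta(\mathfrak{p}'_\bb{C}, \mathfrak{t}'_\bb{C})$ and $\sigma$ swaps them, so (iii) is equivalent to the single condition
\[
(\ast)\ \ \text{for every } \beta \in \Delta(\mathfrak{p}'_\bb{C}, \mathfrak{t}'_\bb{C}),\ \beta(a_1) \text{ and } \beta(a_2) \text{ are never strictly of opposite signs.}
\]

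For (ii) $\Rightarrow$ (i$'$) I observe that if both $\mathfrak{q}'_i$ are holomorphic then $\mathfrak{p}'_+ \subset \mathfrak{q}'_i$, so every $\mathfrak{p}'_+$-weight is $\geq 0$ on each $a_i$ and every $\mathfrak{p}'_-$-weight (the negative of a $\mathfrak{p}'_+$-weight) is $\leq 0$ on each $a_i$; the condition $(\ast)$ holds automatically, and the both-anti-holomorphic case is symmetric. For (i) $\Rightarrow$ (ii), I would first prove a preliminary lemma, via $[\mathfrak{p}',\mathfrak{p}'] = \mathfrak{k}'$ and simplicity of $\mathfrak{g}'$, that $a_i \ne 0$ and $\mathfrak{u}'_i \cap \mathfrak{p}'_\bb{C} \ne 0$ whenever $\mathfrak{q}'_i \ne \mathfrak{g}'_\bb{C}$. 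If $\mathfrak{g}'$ is not of Hermitian type, then $\mathfrak{p}'_\bb{C}$ is irreducible and self-dual as a $\mathfrak{k}'_\bb{C}$-module, so its highest weight $\alpha_0$ satisfies $\alpha_0(a_1) = \max_\beta \beta(a_1) > 0$ and $\alpha_0(a_2) = \min_\beta \beta(a_2) < 0$, violating $(\ast)$; hence $\mathfrak{g}'$ must be of Hermitian type. In the Hermitian case, letting $\alpha_0$ and $\alpha'_0$ be the highest weights of $\mathfrak{p}'_+$ and $\mathfrak{p}'_-$, the same max/min identities turn ``$\mathfrak{q}'_i$ is (anti-)holomorphic'' into one-sided inequalities on $\alpha_0(a_i)$ and $\alpha'_0(a_i)$. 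Evaluating $(\ast)$ at $\beta = \alpha_0$ and $\beta = \alpha'_0$ gives four implications relating the holomorphic/anti-holomorphic properties of $\mathfrak{q}'_1$ and $\mathfrak{q}'_2$; combined with the fact that $\mathfrak{u}'_i \cap \mathfrak{p}'_\bb{C} \ne 0$ rules out $\mathfrak{q}'_i \supset \mathfrak{p}'_\bb{C}$, a short case check forces $\mathfrak{q}'_1, \mathfrak{q}'_2$ to be simultaneously holomorphic or simultaneously anti-holomorphic.

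The most delicate step---and the one easiest to get wrong---is the enforced anti-dominance of $a_2$ coming from the compatibility clause in Setting \ref{setq}: without performing this swap of chamber on the second copy, a naive evaluation of $(\ast)$ returns spurious positives, and one could for instance be misled into concluding that the tensor square of a suitable $A_\mathfrak{q}(\lambda)$ for $\mathfrak{so}(n,1)$ were discretely decomposable.
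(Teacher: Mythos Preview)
Your proposal is correct and follows essentially the same approach as the paper: reduce to Theorem \ref{crit}(iii) for the swap involution on $\mathfrak{g}'\oplus\mathfrak{g}'$, note that the compatibility in Setting \ref{setq} forces $a_1$ dominant and $a_2$ anti-dominant, and then argue via the highest weights $\alpha_0$ (and $\alpha'_0$ in the Hermitian case) of $\mathfrak{p}'_\bb{C}$ (resp.\ $\mathfrak{p}'_\pm$). Your symmetric condition $(\ast)$ is a minor repackaging of the paper's one-directional condition (i$''$) together with its immediate consequence, and the ensuing case analysis is the same.
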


\begin{proof}
Let $\frak{t}'$ be a Cartan subalgebra of $\frak{k}'$.
Fix a positive system $\Delta^+(\frak{k}'_\bb{C},\frak{t}'_\bb{C})$.
Suppose that $\frak{q}'_1$ and $\frak{q}'_2$
are given by 
$a_1\in\sqrt{-1}\frak{t}'$ and $a_2\in\sqrt{-1}\frak{t}'$, 
respectively.
We set ${\mathfrak {g}}={\mathfrak {g}}' \oplus {\mathfrak {g}}'$, 
 ${\mathfrak {k}}={\mathfrak {k}}' \oplus {\mathfrak {k}}'$,
 $\frak{t}=\frak{t}'\oplus\frak{t'}$,
 and 
 ${\mathfrak {q}}={\mathfrak {q}}_1' \oplus {\mathfrak {q}}_2'$.  
Then $\frak{t}$ is a Cartan subalgebra of $\frak{k}$ and
${\mathfrak {q}}$ is a $\theta$-stable parabolic subalgebra of ${\mathfrak {g}}_{\mathbb{C}}$.  
We define the involution $\sigma$ of $\frak{g}$ as 
$\sigma(x,y):=(y,x)$ for $x,y\in\frak{g}'$.
Let $\Delta^+(\frak{k}_\bb{C},\frak{t}_\bb{C})$ 
be the union of $\Delta^+(\frak{k}'_\bb{C},\frak{t}'_\bb{C})$ 
in the first factor and $-\Delta^+(\frak{k}'_\bb{C},\frak{t}'_\bb{C})$ 
in the second factor, so the condition of 
Setting \ref{setq} (1) is satisfied.
We assume that the defining element 
$a=(a_1,a_2)$ of $\frak{q}$ 
is $\Delta^+(\frak{k}_\bb{C},\frak{t}_\bb{C})$-dominant. 
This means that $a_1$ and $-a_2$ are
$\Delta^+(\frak{k}'_\bb{C},\frak{t}'_\bb{C})$-dominant.
Then the condition (iii) in Theorem \ref{crit} amounts to that 
\[
({\rm i}'')\quad
\alpha(a_2)\geq 0 
{\it \ \ whenever\ } \alpha\in\Delta(\frak{p}'_\bb{C},\frak{t}'_\bb{C})
 {\it\  satisfies\ } \alpha(a_1)>0.\]
(i$''$) implies that $\alpha(a_1)\geq 0$ 
whenever $\alpha\in\Delta(\frak{p}'_\bb{C},\frak{t}'_\bb{C})$
 satisfies $\alpha(a_2)>0$.

By Theorem \ref{crit}, 
it suffices to prove that (i$''$) is equivalent to (ii).

(ii) $\Rightarrow$ (i$''$):
This is similar to Proposition \ref{holdd}.
Suppose that $(\frak{g}',\frak{k}')$ is a Hermitian symmetric pair.
We assume $\frak{q}'_1$ and $\frak{q}'_2$ are holomorphic 
with respect to $\frak{p}'_\bb{C}=\frak{p}'_+ + \frak{p}'_-$.
If $\alpha\in\Delta(\frak{p}'_\bb{C},\frak{t}'_\bb{C})$ satisfies 
 $\alpha(a_1)>0$,  
then $\alpha\in\Delta(\frak{p}'_+,\frak{t}'_\bb{C})$.
Since $\Delta(\frak{p}'_+,\frak{t}'_\bb{C})\subset 
\Delta(\frak{q}'_2,\frak{t}_\bb{C})$, 
it follows that $\alpha(a_2)\geq 0$ and 
hence (i$''$) holds.
The same argument works when $\frak{q}'_1$ and $\frak{q}'_2$ are 
anti-holomorphic.

(i$''$) $\Rightarrow$ (ii):
Suppose that $(\frak{g}',\frak{k}')$ is not 
a Hermitian symmetric pair.  
This means that $\frak{k}'_\bb{C}$ is semisimple
 and acts irreducibly on 
$\frak{p}'_\bb{C}$ by the adjoint action.
Let us show $\alpha_0(a_1) >0$
 and $\alpha_0(a_2)<0$
 if $\alpha_0\in \Delta(\frak{p}'_\bb{C},\frak{t}'_\bb{C})$ is 
 the highest weight of $\frak{p}'_\bb{C}$
with respect to $\Delta^+(\frak{k}'_\bb{C},\frak{t}'_\bb{C})$.
First we observe that $\alpha_0(a_1)\geq 0$ because $a_1$ is dominant 
and $\frak{k}'_\bb{C}$ is semisimple.
If $\alpha_0(a_1)=0$, 
then we would have $\mathfrak {p}_{\mathbb{C}}' \subset {\mathfrak {q}}_1'$, 
 which would result in $\frak{q}'_1=\frak{g}'_\bb{C}$, 
 contradicting our assumption.
Therefore $\alpha_0(a_1)>0$.
In the same way, we have $\alpha_0(a_2)<0$.
Hence (i$''$) fails.

Suppose now that $(\frak{g}',\frak{k}')$ is a Hermitian 
symmetric pair and fix a decomposition 
$\frak{p}'_\bb{C}=\frak{p}'_+ + \frak{p}'_-$.
We assume that (i$''$) holds.
Let  $\alpha_0\in\Delta(\frak{p}'_+,\frak{t}'_\bb{C})$
be the highest weight 
of $\frak{p}'_+$ 
with respect to $\Delta^+(\frak{k}'_\bb{C},\frak{t}'_\bb{C})$.
Since $\Delta(\frak{p}'_+,\frak{t}'_\bb{C})
=-\Delta(\frak{p}'_-,\frak{t}'_\bb{C})$, 
we see that $-\alpha_0$ is the lowest weight of $\frak{p}'_-$.

Now we assume that $\frak{q}'_1$ is not anti-holomorphic, namely 
$\frak{p}'_-\not\subset \frak{q}'_1$.
Then $\alpha_0(a_1)>0$ 
because ${\mathfrak {p}}'_- \not \subset {\mathfrak {q}}'_1$
 and $\alpha_0$ is the highest weight.
Then (i$''$) 
implies that $\alpha_0(a_2)\geq 0$.
Since $-a_2$ is dominant, $\alpha(-a_2)\leq \alpha_0(-a_2)\leq 0$
for every $\alpha\in\Delta(\frak{p}'_+,\frak{t}'_\bb{C})$.
Therefore $\frak{q}'_2$ is holomorphic. 
In particular, $\frak{q}'_2$ is not anti-holomorphic, which in turn implies 
that $\frak{q}'_1$ is holomorphic by the same argument.

Likewise,
 if we assume that $\frak{q}'_1$ is not holomorphic, we see 
that $\frak{q}'_1$ and $\frak{q}'_2$ are anti-holomorphic.
\end{proof}

In view of Theorem \ref{aqds} and Theorem \ref{tensor}, 
we can tell when the tensor product of 
two discrete series representations decomposes discretely.

\begin{cor}
\label{dstensor}
Suppose that $V_1$ and $V_2$ are the underlying $(\frak{g}',K')$-modules of 
discrete series representations.
Then $V_1\otimes V_2$ is discretely decomposable as 
a $(\frak{g}',K')$-module if and only if 
they are simultaneously holomorphic (or anti-holomorphic) discrete series 
representations.
\end{cor}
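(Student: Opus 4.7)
The plan is to reduce the corollary to Theorem \ref{tensor} by expressing discrete series as $A_{\frak{q}}(\lambda)$-modules via Theorem \ref{aqds}. By Theorem \ref{aqds}, we can write $V_i = A_{\frak{q}'_i}(\lambda_i)$ for some $\theta$-stable Borel subalgebra $\frak{q}'_i$ of $\frak{g}'_\bb{C}$ and some $\lambda_i$ in the good range; the existence of a discrete series on $G'$ forces the equal-rank condition ${\rm rank}\,\frak{g}'_\bb{C}={\rm rank}\,\frak{k}'_\bb{C}$. Since a discrete series representation is infinite dimensional, $\frak{q}'_i\neq\frak{g}'_\bb{C}$, and the good range is contained in the weakly fair range. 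Hence Theorem \ref{tensor} applies, asserting that $V_1\otimes V_2$ is discretely decomposable as a $(\frak{g}',K')$-module if and only if $\frak{g}'$ is of Hermitian type and both $\frak{q}'_1$ and $\frak{q}'_2$ are simultaneously holomorphic or simultaneously anti-holomorphic.

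The remaining step is to recognize the latter condition on Borel subalgebras as the condition that $V_1$ and $V_2$ are simultaneously holomorphic (or anti-holomorphic) discrete series. For a $\theta$-stable Borel $\frak{q}'_i=\frak{t}'_\bb{C}+\frak{u}'_i$ in the Hermitian case, the inclusion $\frak{p}'_+\subset\frak{q}'_i$ is equivalent to the equality $\frak{u}'_i\cap\frak{p}'_\bb{C}=\frak{p}'_+$: indeed $\frak{p}'_+\cap\frak{t}'_\bb{C}=0$ forces $\frak{p}'_+\subset\frak{u}'_i$, while the positive system associated to a Borel cannot contain any root of $\frak{p}'_+$ together with its negative, which lies in $\Delta(\frak{p}'_-,\frak{t}'_\bb{C})$. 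By the paragraph following Definition \ref{def:holoparab}, $\frak{q}'_i\cap\frak{p}'_\bb{C}=\frak{p}'_+$ together with $\lambda_i$ in the good range implies that $A_{\frak{q}'_i}(\lambda_i)$ is the underlying $(\frak{g}',K')$-module of a holomorphic discrete series representation of $G'$. The anti-holomorphic case is entirely symmetric, completing the equivalence.

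No genuine obstacle arises: Theorems \ref{aqds} and \ref{tensor} do all the heavy lifting, and the only item requiring verification is the elementary identification of holomorphic $\theta$-stable Borel subalgebras with holomorphic discrete series sketched above. We note also that the Hermitian-type hypothesis on $\frak{g}'$ appearing in condition (ii) of Theorem \ref{tensor} is automatic once either $V_1$ or $V_2$ is known to be a holomorphic discrete series, so that the two formulations of the criterion match.
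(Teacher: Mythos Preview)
Your proof is correct and follows exactly the route the paper indicates: the paper's own argument is the single sentence ``In view of Theorem \ref{aqds} and Theorem \ref{tensor}'', and you have simply unpacked that citation, including the elementary observation that for a $\theta$-stable Borel in the equal-rank Hermitian case the condition $\frak{p}'_+\subset\frak{q}'_i$ is equivalent to $\frak{q}'_i\cap\frak{p}'_\bb{C}=\frak{p}'_+$. The only point left implicit in both your write-up and the paper is the converse identification (a holomorphic discrete series forces the realizing Borel to be holomorphic up to $K$-conjugacy), which is standard and causes no trouble.
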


%%%%%%%%%%%%%%%%%%%%%%%%%%%%%%%%%%%%%%%%%%%%%%%%%%%%%%%%%

\section{Classification of Discretely Decomposable $A_\frak{q}(\lambda)$}
\label{sec:class}

The classification of the triples $(\frak{g},\frak{g}^\sigma,\frak{q})$ 
goes as follows.
The tensor product case was treated in Section \ref{sec:tensor}. 
Consider the case where $\frak{g}$ is simple.
We fix a simple Lie algebra $\frak{g}$ with a Cartan involution $\theta$.
Suppose that $(\frak{g},\frak{g}^{\sigma_1},\frak{q}_1)$ and 
$(\frak{g},\frak{g}^{\sigma_2},\frak{q}_2)$ are triples  
such that ${\rm Ad}(k)\sigma_1{\rm Ad}(k^{-1})=\sigma_2$ 
and ${\rm Ad}(k')\frak{q}_1=\frak{q}_2$ for 
$k,k'\in K$.
Then there is an isomorphism 
$A_{\frak{q}_1}(\lambda_1)|_{\frak{g}^{\sigma_1}}
\simeq A_{\frak{q}_2}(\lambda_2)|_{\frak{g}^{\sigma_2}}$ 
via the isomorphism 
${\rm Ad}(k):\frak{g}^{\sigma_1}\to\frak{g}^{\sigma_2}$
if ${\rm Ad}^*(k')\lambda_1=\lambda_2$.
In this sense the branching problems 
with respect to $(\frak{g},\frak{g}^{\sigma_1},\frak{q}_1)$
and $(\frak{g},\frak{g}^{\sigma_2},\frak{q}_2)$
are equivalent. 
Thus, we will classify 
the triples $(\frak{g},\frak{g}^\sigma,\frak{q})$
with the discrete decomposability condition
up to the adjoint action ${\rm Ad}(K)\times {\rm Ad}(K)$. 
Here, $\sigma$ is an involution commuting with $\theta$ 
and $\frak{q}$ is a $\theta$-stable parabolic subalgebra 
of $\frak{g}_\bb{C}$.

Retain the notation and the assumption in Setting \ref{setq}. 
In particular, 
the parabolic subalgebra $\frak{q}$ is given by 
a $\Delta^+(\frak{k}_\bb{C},\frak{t}_\bb{C})$-dominant vector 
$a\in\sqrt{-1}\frak{t}$.
The classification of $(\frak{g},\frak{g}^\sigma,\frak{q})$ with
the discrete decomposability condition is given 
as conditions on the coordinates $a_i$ of $a$.

\begin{thm}
\label{ddclass}
Let $(\frak{g},\frak{g}^\sigma)$ be an 
irreducible symmetric pair such that $\sigma$ commutes 
with $\theta$ 
and let $\frak{q}$ be a $\theta$-stable 
parabolic subalgebra of $\frak{g}_\bb{C}$, 
not equal to $\frak{g}_\bb{C}$.
Suppose that $\lambda$ is in the weakly fair range and 
that $A_\frak{q}(\lambda)$ is non-zero.
Then $A_\frak{q}(\lambda)$ is discretely decomposable 
as a $(\frak{g}^\sigma,K^\sigma)$-module if and only if 
one of the following conditions on 
the triple $(\frak{g},\frak{g}^\sigma,\frak{q})$ holds.
\begin{enumerate}
\item[$(1)$]
$\frak{g}$ is compact.
\item[$(2)$]
$\sigma=\theta$.
\item[$(3)$]
$\frak{g}=\frak{g}'\oplus\frak{g}'$ and 
$\frak{q}=\frak{q}'_1\oplus\frak{q}'_2$.
Further, $\frak{g}'$ is of Hermitian type and 
both of the parabolic subalgebras
$\frak{q}'_1$ and $\frak{q}'_2$ of $\frak{g}'_\bb{C}$ are holomorphic, or 
they are anti-holomorphic 
(see Table \ref{holparablist} for 
holomorphic and anti-holomorphic parabolic subalgebras).
\item[$(4)$]
The symmetric pair $(\frak{g},\frak{g}^\sigma)$ 
is of holomorphic type (see Table \ref{holpairlist} for the classification) 
and the parabolic subalgebra $\frak{q}$ is either holomorphic 
or anti-holomorphic.
\item[$(5)$]
The triple $(\frak{g},\frak{g}^\sigma,\frak{q})$ is isomorphic to
one of those listed in Table \ref{dslist} or
in Table \ref{isolist}, 
where the parabolic subalgebra $\frak{q}$ is given by the conditions on $a$.
\end{enumerate}
In Tables \ref{holparablist}, \ref{dslist}, and \ref{isolist}, 
we have assumed that the defining element $a$ of $\frak{q}$ 
is dominant with respect to 
$\Delta^+(\frak{k}_\bb{C},\frak{t}_\bb{C})$ 
(see Appendix \ref{sec:setup} 
for concrete conditions on the coordinates of $a$) 
and list only additional conditions 
for the discrete decomposability.
\end{thm}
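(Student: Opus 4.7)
\emph{Proof plan.} The proof splits into two directions. For sufficiency, cases $(1)$ and $(2)$ are immediate: when $\mathfrak{g}$ is compact every $A_\mathfrak{q}(\lambda)$ is finite dimensional, and when $\sigma=\theta$ the $(\mathfrak{g}',K')=(\mathfrak{k},K)$-module $A_\mathfrak{q}(\lambda)$ is $K$-admissible, hence trivially discretely decomposable. Case $(3)$ is exactly Theorem \ref{tensor}, and case $(4)$ is exactly Proposition \ref{holdd}. For case $(5)$, the plan is to go through each entry of Tables \ref{dslist} and \ref{isolist} and verify the criterion Theorem \ref{crit}~(iii): namely that $\sigma\alpha(a)\geq 0$ for every $\alpha\in\Delta(\mathfrak{p}_\bb{C},\mathfrak{t}_\bb{C})$ with $\alpha(a)>0$. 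Since $\sigma$ acts on the weight coordinates by an explicit permutation/sign change (recorded pair-by-pair using the root data in Appendix \ref{sec:setup}), this is a direct inequality check in the coordinates of $a$.

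For necessity, assume $A_\mathfrak{q}(\lambda)$ is discretely decomposable with $\mathfrak{q}\neq\mathfrak{g}_\bb{C}$, $\mathfrak{g}$ noncompact, and $\sigma\neq\theta$. When $\mathfrak{g}=\mathfrak{g}'\oplus\mathfrak{g}'$ with $\sigma$ swapping the factors, Theorem \ref{tensor} shows the triple must be of form $(3)$. So the remaining work is under the assumption that $\mathfrak{g}$ is simple. Here I would invoke Berger's classification \cite{ber} of involutions $\sigma$ (up to inner/outer automorphism) and work through the finite list of pairs $(\mathfrak{g},\mathfrak{g}^\sigma)$. In each case I fix the Cartan $\mathfrak{t}\subset\mathfrak{k}$ as in Setting \ref{setq}(1), identify the $\sigma$-action on $\Delta(\mathfrak{p}_\bb{C},\mathfrak{t}_\bb{C})$ explicitly, and then run the criterion Theorem \ref{crit}~(iii) against the family of $\Delta^+(\mathfrak{k}_\bb{C},\mathfrak{t}_\bb{C})$-dominant $a\in\sqrt{-1}\mathfrak{t}$.

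The first round of elimination uses Propositions \ref{split}, \ref{highlow}, and \ref{dom}: draw the Satake diagram of $\mathfrak{k}^\sigma+\sqrt{-1}\mathfrak{k}^{-\sigma}$ with the highest weight vertex $\star$ adjoined, and if no black circle meets $\star$ then $-\sigma\alpha_0$ is dominant, which kills every $A_\mathfrak{q}(\lambda)$ unless $\mathfrak{g}$ is Hermitian and $\mathfrak{q}$ is (anti)holomorphic and the pair is of holomorphic type. This disposes of a large number of pairs at once and reduces to the list of pairs compiled in Appendix \ref{sec:diagram}. For the remaining pairs, I enumerate $K$-conjugacy classes of $\theta$-stable parabolics by the orderings and coincidences of the coordinates $a_i$ of $a$, and for each ordering test Theorem \ref{crit}~(iii). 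Proposition \ref{incl} is used to cut down the labor: once an inclusion-minimal $\mathfrak{q}$ (typically a $\theta$-stable Borel) is shown to be discretely decomposable, all $\mathfrak{q}$ containing it automatically are; this is what produces the ``discrete series type'' families of Table \ref{dslist} and reveals that the non-Borel survivors outside of $(4)$ form the short ``isolated type'' list of Table \ref{isolist}. Proposition \ref{ass} is used at the end to collapse $\sigma$ and its associated $\theta\sigma$ partner, so only one representative of each pair $\{\sigma,\theta\sigma\}$ needs explicit treatment.

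The main obstacle is thus not conceptual but combinatorial: for each of the surviving Berger pairs one must work out the action of $\sigma$ on the root coordinates, enumerate the dominant chambers of $\sqrt{-1}\mathfrak{t}$ subdivided by the vanishing and ordering conditions on $a_i$, and check criterion (iii) on each. The delicate points are (a) choosing the Cartan $\mathfrak{t}$ so that $\sigma$ acts by a transparent signed permutation on the $e_i$-basis while remaining compatible with $\Delta^+(\mathfrak{k}_\bb{C},\mathfrak{t}_\bb{C})$ in the sense of Setting \ref{setq}(1), and (b) in the Hermitian cases, separating the purely holomorphic/anti-holomorphic regime captured by $(4)$ from the ``mixed'' regimes that must be listed separately in Table \ref{dslist} (discrete-series-containing chambers) or discarded. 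Organizing this verification pair-by-pair is what fills Section \ref{sec:class} and its subsequent tables, and matches the five cases of the theorem exhaustively.
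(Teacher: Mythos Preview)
Your proposal is correct and follows essentially the same approach as the paper's own proof: sufficiency via the immediate cases, Theorem~\ref{tensor}, and Proposition~\ref{holdd}; necessity via Berger's classification combined with the elimination step through Propositions~\ref{split}, \ref{highlow}, \ref{dom} (the Satake-diagram test recorded in Appendix~\ref{sec:diagram}), followed by a case-by-case verification of criterion~(iii) of Theorem~\ref{crit} on the surviving pairs. Your explicit invocation of Propositions~\ref{incl} and~\ref{ass} as labor-saving devices is consistent with how the paper organizes the tables, though the paper itself leaves these mostly to the surrounding remarks rather than the proof proper.
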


\begin{proof}
If $\frak{g}$ is compact, namely, 
if $\frak{g}$ is isomorphic to the Lie algebra of a compact Lie group, then 
the discrete decomposability follows obviously.

We divide irreducible symmetric pairs $(\frak{g},\frak{g}^{\sigma})$ 
into the following four cases.

\noindent
Case 1: In the tensor product case, a necessary and sufficient condition 
for the discrete decomposability 
was obtained in Theorem \ref{tensor}.

In the rest of the proof, we assume that $\frak{g}$ is non-compact and simple.

\noindent
Case 2: Suppose that $\frak{t}^\sigma=0$ or the assumption 
(1) or (2) in Proposition \ref{highlow} is satisfied 
for a symmetric pair $(\frak{g},\frak{g}^\sigma)$.
Then it follows from Propositions \ref{split} and \ref{highlow} 
that the triple $(\frak{g},\frak{g}^{\sigma},\frak{q})$ 
does not satisfy the discrete decomposability condition for 
any $\theta$-stable parabolic subalgebra $\frak{q}$ other than 
$\frak{g}_\bb{C}$. 

\noindent
Case 3: Suppose that  the assumption 
(3) in Proposition \ref{highlow} is satisfied 
for a symmetric pair $(\frak{g},\frak{g}^\sigma)$.
Then the triple $(\frak{g},\frak{g}^{\sigma},\frak{q})$ 
satisfies the discrete decomposability condition 
if and only if $\frak{q}$ is holomorphic or anti-holomorphic. 

We can verify which irreducible pairs $(\frak{g},\frak{g}^\sigma)$
belong to Case 2 or Case 3.
The condition $\frak{t}^{\sigma}=0$ holds if and only if 
$\frak{k}^{\sigma}+\sqrt{-1}\frak{k}^{-\sigma}$ is a split real form 
of $\frak{k}_\bb{C}$, so it is easily verified.
The case $\frak{t}^\sigma\neq 0$ is less easy.
We give a list of all the pairs 
$(\frak{g},\frak{g}^{\sigma})$ such that 
$-\sigma\alpha_0$ is $\Delta^+(\frak{k}_\bb{C},\frak{t}_\bb{C})$-dominant
and $\frak{t}^{\sigma}\neq 0$ in
Appendix \ref{sec:diagram}.
The verification of 
the dominancy of $-\sigma\alpha_0$ 
is reduced to a simple combinatorial problem by using the Satake diagram 
as we noted in the end of Section \ref{sec:dd}.

\noindent
Case 4: 
The classification of the triples $(\frak{g},\frak{g}^{\sigma},\frak{q})$ 
with the discrete decomposability condition 
for the remaining symmetric pairs $(\frak{g},\frak{g}^{\sigma})$ 
is more delicate.
For this, we apply the criterion, Theorem \ref{crit} ${\rm (iii)}$. 
This criterion reduces to simple computations for 
only the pair $(\frak{k},\frak{k}^\sigma)$ 
and the set of weights $\Delta(\frak{p}_\bb{C},\frak{t}_\bb{C})$.
We then carry out the computation in a case-by-case way.

To be more precise, we classify 
the $K$-conjugacy classes of symmetric pairs 
$(\frak{g},\frak{g}^{\sigma})$, 
building on Berger's classification of symmetric pairs (\cite{ber}). 
We postpone this until Section \ref{sec:Kconj}.

In Setting \ref{setq}, 
we gave a symmetric pair $(\frak{g},\frak{g}^\sigma)$, followed by  
the choice of a Cartan subalgebra $\frak{t}$ of $\frak{k}$ and 
a positive system $\Delta^+(\frak{k}_\bb{C},\frak{t}_\bb{C})$
that satisfy the compatibility condition with respect to $\sigma$ and 
finally we set a $\theta$-stable parabolic subalgebra $\frak{q}$ 
given by a $\Delta^+(\frak{k}_\bb{C},\frak{t}_\bb{C})$-dominant 
vector $a\in\sqrt{-1}\frak{t}$.
In the following, however, we do this in a different order.
We fix $\frak{t}$ and $\Delta^+(\frak{k}_\bb{C},\frak{t}_\bb{C})$ 
before $\sigma$ is given. 
This does not lose the generality 
because all the pairs $(\frak{t},\Delta^+(\frak{k}_\bb{C},\frak{t}_\bb{C}))$ 
are $K$-conjugate (recall that 
we treat $\sigma$ and $\frak{q}$ up to $K\times K$-conjugacy).
Then choose $\sigma$ that
satisfies the conditions in Setting \ref{setq} (1) 
with respect to 
$(\frak{t},\Delta^+(\frak{k}_\bb{C},\frak{t}_\bb{C}))$.
Each $K$-conjugacy class of 
$\theta$-stable parabolic subalgebras of $\frak{g}_\bb{C}$ 
has a unique representative $\frak{q}$ 
which is given by a dominant vector $a\in\sqrt{-1}\frak{t}$.

Let $\frak{g}$ be a non-compact simple Lie algebra.
Choose coordinates $e_i$ of $\frak{t}_\bb{C}$ and 
write the defining element $a$ 
of $\frak{q}$ as $a=\sum a_i e_i$ 
(see Appendix \ref{sec:setup}).
Fix a positive system $\Delta^+(\frak{k}_\bb{C},\frak{t}_\bb{C})$. 
We assume that $a$ is 
$\Delta^+(\frak{k}_\bb{C},\frak{t}_\bb{C})$-dominant.
For a given $K$-conjugacy class of symmetric pairs 
$(\frak{g},\frak{g}^\sigma)$, 
we choose a representative $\sigma$ that satisfies 
the conditions in Setting $\ref{setq}$ (1).
We describe the restriction of $\sigma$ to $\frak{t}$ 
and then the $\sigma$-action on the set of weights 
$\Delta(\frak{p}_\bb{C},\frak{t}_\bb{C})$.
Now the condition Theorem \ref{crit} ${\rm (iii)}$ amounts to conditions on 
the coordinates $a_i$.

We illustrate computations in the following two examples.
Other cases are verified similarly. 
\end{proof}

\begin{ex}
{\rm 
Let $(\frak{g},\frak{g}^\sigma)
  =(\frak{su}(m,n),\,\frak{su}(m,k)\oplus\frak{su}(n-k)\oplus \frak{u}(1))$
for $k,n-k\geq 1$.
We fix $\frak{t}$, $\{\epsilon_i\}$, and $\{e_i\}$ as in Setting \ref{sumn}.
Choose $\sigma$ that satisfies the conditions in Setting \ref{setq} (1), 
so the restriction of $\sigma$ to $\frak{t}_\bb{C}$ can be written as 
$\sigma(e_i)=e_{\sigma(i)}$
for $1\leq i \leq m+n$, 
where 
\begin{align*}
&\sigma(i)=i & &\quad{\rm for}\ 1\leq i\leq m,\\
&\sigma(m+j)=m+n-j+1 & &\quad{\rm for}\ 1\leq j\leq \min\{k,n-k\} 
 \ {\rm or}\ \max\{k,n-k\}< j\leq n, \\
&\sigma(m+j)=m+j & &\quad{\rm for}\ \min\{k,n-k\}< j \leq \max\{k,n-k\}.
\end{align*}
Suppose that $\frak{q}$ is given by a dominant vector 
$a=a_1e_1+\dots +a_{m+n}e_{m+n}\in\sqrt{-1}\frak{t}$, namely 
$a_1\geq \dots \geq a_m$ 
and $a_{m+1}\geq \dots \geq a_{m+n}$ as in Setting \ref{sumn}.
If the condition $({\rm iii})$ in Theorem \ref{crit} 
is satisfied, 
then $a_i-a_{m+n}>0$ implies $a_i-a_{m+1}\geq 0$ for 
$1\leq i\leq m$.
As a consequence, we see that the triple 
$(\frak{su}(m,n),\,
 \frak{su}(m,k)\oplus\frak{su}(n-k)\oplus \frak{u}(1),
 \,\frak{q})$ satisfies the discrete decomposability condition
if and only if  
\begin{enumerate}
\item[(1)] $a_{m+n}\geq a_1$, 
\item[(2)] there exists an integer $1\leq l\leq m-1$ such that \\
$a_1\geq \cdots \geq a_l \geq a_{m+1}\geq \cdots \geq a_{m+n}
\geq a_{l+1}\geq \cdots \geq a_{m}$, or 
\item[(3)] $a_m\geq a_{m+1}$.
\end{enumerate}
These triples are listed in Table \ref{dslist}.
}
\end{ex}

\begin{ex}
{\rm 
Let $(\frak{g},\frak{g}^\sigma)
  =(\frak{f}_{4(-20)},\,\frak{so}(8,1))$.
Here, the exceptional Lie algebra 
$\frak{f}_{4(-20)}$ is a real form of $\frak{f}_4^\bb{C}$ with real rank one.
We fix $\frak{t}$, $\{\epsilon_i\}$, and $\{e_i\}$ as in Setting \ref{f42}.
Choose $\sigma$ that satisfies the conditions in Setting \ref{setq} (1), 
so the restriction of $\sigma$ to $\frak{t}_\bb{C}$ can be written as 
\begin{align*}
&\sigma e_1=-e_1,\\
&\sigma e_{i}=e_{i}\quad\quad \ {\rm for}\ 
 2\leq i\leq  4.
\end{align*}
Suppose that $\frak{q}$ is given by 
$a=a_1e_1+\dots +a_{4}e_{4}\in\sqrt{-1}\frak{t}$, namely 
$a_1\geq \dots \geq a_4\geq 0$ as in Setting \ref{f42}. 
If the condition $({\rm iii})$ in Theorem \ref{crit} is satisfied, then 
$\frac{1}{2}(\epsilon_1+\epsilon_2-\epsilon_3-\epsilon_4)(a)\leq 0$ 
or $\frac{1}{2}(-\epsilon_1+\epsilon_2-\epsilon_3-\epsilon_4)(a) \geq 0$.
The former implies $a_1=a_2=a_3=a_4$ and 
the latter implies $a_1=a_2\geq a_3=a_4=0$.
Hence the triple
 $(\frak{f}_{4(-20)},\,\frak{so}(8,1),\,\frak{q})$
 satisfies the discrete decomposability condition
if and only if 
$(a_1,a_2,a_3,a_4)=(s,s,s,s)$ or $(s,s,0,0)$ for $s\geq 0$.
These triples are listed in Table \ref{isolist}.
}
\end{ex}

From our classification result, we see that: 

\begin{cor}
\label{borel}
In the setting of Theorem \ref{ddclass}, 
suppose that $\frak{q}$ is a Borel subalgebra of $\frak{g}_\bb{C}$.
If $A_\frak{q}(\lambda)$ is discretely decomposable as a 
$(\frak{g}^{\sigma},K^{\sigma})$-module, 
then $\sigma=\theta$ or 
${\rm rank}\,\frak{g}_\bb{C}={\rm rank}\,\frak{k}_\bb{C}$.
In particular, 
$A_\frak{q}(\lambda)$ is isomorphic to the underlying 
$(\frak{g},K)$-module of a discrete series representation 
in the latter case as far as $\lambda$ is in the good range.
\end{cor}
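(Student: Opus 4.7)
The plan is to invoke Theorem \ref{ddclass}, which gives the complete classification of triples $(\frak{g},\frak{g}^\sigma,\frak{q})$ for which $A_\frak{q}(\lambda)$ is discretely decomposable, and to verify in each of its five cases that the extra hypothesis ``$\frak{q}$ is a Borel subalgebra'' forces either $\sigma=\theta$ or ${\rm rank}\,\frak{g}_\bb{C}={\rm rank}\,\frak{k}_\bb{C}$.

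Cases (1)--(4) would go immediately. In case (1), $\frak{g}$ is compact so $\frak{g}=\frak{k}$ and the equal-rank condition is automatic. In case (2), $\sigma=\theta$ is the first alternative. In case (3), $\frak{g}\simeq\frak{g}'\oplus\frak{g}'$ with $\frak{g}'$ of Hermitian type, and any simple Lie algebra of Hermitian type has equal rank, so ${\rm rank}\,\frak{g}_\bb{C}={\rm rank}\,\frak{k}_\bb{C}$. In case (4), the holomorphic-type hypothesis on $(\frak{g},\frak{g}^\sigma)$ requires $\frak{g}$ itself to be of Hermitian type, hence equal rank.

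The substantive step is case (5), which amounts to inspecting the two tables. For Table \ref{isolist} (isolated type), the listed defining elements $a$ have prescribed equalities among their coordinates, so the Levi of $\frak{q}$ strictly contains a Cartan of $\frak{g}_\bb{C}$ and $\frak{q}$ is never a Borel; hence no triple from this table contributes to the corollary. For Table \ref{dslist} (discrete series type), each family contains a $\theta$-stable Borel by construction, so I would verify entry by entry that every pair $(\frak{g},\frak{g}^\sigma)$ appearing there has $\frak{g}$ of equal rank. This bookkeeping is the main obstacle: one must check that no non-equal-rank real form (such as $\frak{sl}(n,\bb{R})$ with $n\geq 3$, $\frak{su}^*(2n)$, or an orthogonal algebra of odd total signature) sneaks into the table. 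This is consistent with the intended reading of the name ``discrete series type'', namely that the extremal member of the family really is a genuine discrete series representation of $G$.

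Once the case check is finished, the final assertion of the corollary is a direct consequence of Theorem \ref{aqds}: when ${\rm rank}\,\frak{g}_\bb{C}={\rm rank}\,\frak{k}_\bb{C}$, $\lambda$ is in the good range, and $\frak{q}$ is a $\theta$-stable Borel subalgebra, $A_\frak{q}(\lambda)$ is the underlying $(\frak{g},K)$-module of a discrete series representation of $G$.
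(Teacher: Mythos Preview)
Your proposal is correct and takes essentially the same approach as the paper, which offers no proof beyond the sentence ``From our classification result, we see that:''---your case-by-case inspection of Theorem \ref{ddclass} and Tables \ref{dslist}, \ref{isolist} is exactly the intended verification. Your two key observations (that every $\frak{g}$ in Table \ref{dslist} is of equal rank, and that every defining element $a$ in Table \ref{isolist} has coinciding coordinates so $\frak{q}$ is never Borel) are both accurate.
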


\begin{rem}
{\rm 
The triples $(\frak{g},\frak{g}^\sigma,\frak{q})$ in 
Table \ref{dslist} have the following property: 
there exists a $\theta$-stable
Borel subalgebra $\frak{b}$ contained in $\frak{q}$
such that $(\frak{g},\frak{g}^\sigma,\frak{b})$ also satisfies the 
discrete decomposability condition.
This is also the case for (1), (2), (3), and (4) in Theorem \ref{ddclass}.
Then Proposition \ref{incl} implies that every $\theta$-stable 
parabolic subalgebra containing $\frak{b}$ satisfies the discrete 
decomposability condition.
We call such triples $(\frak{g},\frak{g}^\sigma,\frak{q})$ 
{\it discrete series type}. 
The triples in Table \ref{dslist} together with (1), (2), (3), and (4) in 
Theorem \ref{ddclass} give all triples of discrete series type.
}
\end{rem}

\begin{rem}
{\rm 
The remaining case is (5) in Theorem \ref{ddclass} for Table \ref{isolist}.
We call triples $(\frak{g},\frak{g}^\sigma,\frak{q})$ in Table \ref{isolist} 
{\it isolated type}.
For generic $m,n$ and $k$, discrete series type and isolated type are
exclusive.
However, for particular $m,n$ or $k$ there may be overlaps (see
Remark \ref{overlap} (6), (7)).
}
\end{rem}

\begin{rem}
\label{overlap}
{\rm 
We did not intend to write the cases 
$(1)$ to $(5)$ in Theorem \ref{ddclass} 
in an exclusive way. 
Also there are some overlaps among the tables.
What follows from (2) to (5) below show overlaps between 
the cases $(4)$ and $(5)$ in Theorem \ref{ddclass}.
(6) and (7) discuss some 
overlaps between Tables \ref{dslist} and \ref{isolist}.

\begin{itemize}
\item[(1)]
Table \ref{holpairlist} includes the case 
$\sigma=\theta$ with $\frak{g}$ Hermitian.

\item[(2)]
The symmetric pair 
$(\frak{su}(m,n),\,\frak{su}(m,k)\oplus\frak{su}(n-k)\oplus\frak{u}(1))$ 
in Table \ref{dslist} is of holomorphic type and 
the parabolic subalgebra $\frak{q}$ is holomorphic (anti-holomorphic) if 
$a_m\geq a_{m+1}$ ($a_{m+n}\geq a_1$).

\item[(3)]
The symmetric pairs 
$(\frak{so}(2m,2n),\,\frak{so}(2m,k)\oplus\frak{so}(2n-k))$ and 
$(\frak{so}(2m,2n+1),\,\frak{so}(2m,k)\oplus\frak{so}(2n-k+1))$ 
in Table \ref{dslist}
are of holomorphic type if $m=1$ and 
$\frak{q}$ is holomorphic or anti-holomorphic if 
$m=1,\ |a_1|\geq |a_2|$.

\item[(4)]
The symmetric pair 
$(\frak{so}(2m,2n),\,\frak{u}(m,n))$ in Table \ref{dslist}
is of holomorphic type if $m=1$ or $n=1$ and 
the parabolic subalgebra $\frak{q}$ is holomorphic or anti-holomorphic if 
$m=1,\ a=a_1e_1$ or $n=1,\ a=a_{m+1}e_{m+1}$.

\item[(5)]
The symmetric pairs 
$(\frak{so}^*(2n),\,\frak{so}^*(2n-2)\oplus\frak{so}(2))$ and 
$(\frak{so}^*(2n),\,\frak{u}(n-1,1))$
in Table \ref{isolist}
are of holomorphic type and 
the parabolic subalgebra $\frak{q}$ is holomorphic or anti-holomorphic if 
$k=1$ or $n-1$.

\item[(6)]
The triple
$(\frak{so}(2m,2n),\,\frak{u}(m,n),\frak{q})$ for 
$m=2$ and $a=a_1e_1$ in Table \ref{isolist}
is also listed in Table \ref{dslist}.

\item[(7)]
The triple
$(\frak{su}(2m,2n),\,\frak{sp}(m,n),\frak{q})$ for 
$m=1$ and 
$(a_1,a_2;\,a_3,\dots,a_{2n+2})
=(s,0;\,t,0,\dots,0) (s\geq t)$, $(0,-s;\,0,\dots,0,-t) (s\geq t)$, 
$(s,-t;\,0,\dots,0)(s,t\geq 0) \mod \bb{I}_{2n+2}$
in Table \ref{isolist}
are also listed in Table \ref{dslist}.

\item[(8)]
There are also coincidences of Lie algebras with small rank such as 
$\frak{sp}(2,\bb{R})\simeq\frak{so}(2,3)$, 
$\frak{so}(2,4)\simeq\frak{su}(2,2)$, 
$\frak{so}(3,3)\simeq\frak{sl}(4,\bb{R})$,
and 
$\frak{so}^*(6)\simeq \frak{su}(1,3)$.
\end{itemize}
}
\end{rem}

\begin{rem}
\label{exh}
{\rm 
Our classification of the triples $(\frak{g},\frak{g}^\sigma,\frak{q})$ 
is carried out up to $K \times K$-conjugacy 
as we noted in the beginning of this section. 
In some cases, there exist 
more than one $K$-conjugacy classes 
of $(\frak{g},\frak{g}^\sigma)$ for a given 
Lie algebra isomorphism class of $\frak{g}^\sigma$.
To save space, we did not distinguish some of different 
$K$-conjugacy classes in the tables 
if the discrete decomposability conditions with respect to them are the same.
For given Lie algebras $\frak{g}$ and $\frak{g}'$, we define 
${\cal S}$, ${\cal T}$ and $\phi$ as we shall explain in (\ref{Krest}).
The elements of ${\cal S}$ correspond to the $K$-conjugacy classes of 
involutions $\sigma$ of $\frak{g}$ such that $\theta\sigma=\sigma\theta$
and $\frak{g}^\sigma$ is isomorphic to $\frak{g}'$.
The discrete decomposability condition only depends on 
their images in ${\cal T}$ by $\phi$.
\begin{itemize}
\item[(1)]
Let $\frak{g}=\frak{so}^*(8)$ and 
$\frak{g}'=\frak{so}^*(2)\oplus\frak{so}^*(6)\simeq \frak{u}(1,3)$.
There are two $K$-conjugacy classes of 
 involutions $\sigma$
such that $\theta\sigma=\sigma\theta$ and 
$\frak{g}^\sigma$ is isomorphic to $\frak{g}'$, 
and one is the associated pair of the other.
We list the two pairs 
$(\frak{so}^*(8),\frak{so}^*(2)\oplus\frak{so}^*(6))$ 
and 
$(\frak{so}^*(8),\frak{u}(1,3))$
in Table \ref{holpairlist}, 
which are not $K$-conjugate to each other.
This is the case (1) in Proposition \ref{Kconj}.
\item[(2)]
Let $\frak{g}=\frak{su}(m,n)$.
Among two types of symmetric pairs 
$(\frak{su}(m,n),\frak{su}(m,k)\oplus\frak{su}(n-k)\oplus\frak{u}(1))$
and $(\frak{su}(m,n),\frak{su}(m-k)\oplus\frak{su}(k,n)\oplus\frak{u}(1))$, 
we list only the former type in Tables \ref{dslist} and \ref{isolist} 
because the latter type can be treated by interchanging $m$ and $n$.
Similarly for $\frak{g}=\frak{so}(m,n)$ or $\frak{g}=\frak{sp}(m,n)$.
\item[(3)]
Let $\frak{g}=\frak{so}(2m,2n)$ and $\frak{g}'=\frak{u}(m,n)$.
Consider $K$-conjugacy classes of involutions $\sigma$
such that $\theta\sigma=\sigma\theta$ and 
$\frak{g}^\sigma$ is isomorphic to $\frak{g}'$.
Then $\frak{g}^{\theta\sigma}$ is also isomorphic to 
$\frak{g}'$.

If both $m$ and $n$ are odd, then 
$|{\rm Im}\ \phi|$=1 and $|{\cal S}|=2$.
This is the case (1) in Proposition \ref{Kconj}.

If $m$ is even and $n$ is odd, then 
$|{\rm Im}\ \phi|$=2 and $|{\cal S}|=2$.
For every $\sigma$, the case (3) in Proposition \ref{Kconj}
occurs.
The same holds if $m$ is odd and $n$ is even.

If both $m$ and $n$ are even, then 
$|{\rm Im}\ \phi|$=4 and $|{\cal S}|=4$.
For every $\sigma$, the case (3) in Proposition \ref{Kconj}
occurs.

It turns out that 
the discrete decomposability condition depends on the 
$K$-conjugacy classes of $\sigma$ only if $m=2$ (or $n=2$). 
For $m=2$ and $n\neq 2$, we write in Table \ref{dslist} as 
\begin{align*}
&\frak{g}^{\sigma}=\frak{u}(2,n)_1 \quad {\rm if}\ 
\sigma(e_1)=-e_2,\\
&\frak{g}^{\sigma}=\frak{u}(2,n)_2 \quad {\rm if}\ 
\sigma(e_1)=e_2,
\end{align*}
(if $m=2$ and $n=2k$ $(k>1)$, there are four $K$-conjugacy classes, 
so we group them two and two).
For $m=n=2$, we write in Table \ref{dslist} as 
\begin{align*}
&\frak{g}^{\sigma}=\frak{u}(2,2)_{11} \quad {\rm if}\ 
\sigma(e_1)=-e_2\ {\rm and}\ \sigma(e_3)=-e_4, \\
&\frak{g}^{\sigma}=\frak{u}(2,2)_{12} \quad {\rm if}\ 
\sigma(e_1)=-e_2\ {\rm and}\ \sigma(e_3)=e_4, \\
&\frak{g}^{\sigma}=\frak{u}(2,2)_{21} \quad {\rm if}\ 
\sigma(e_1)=e_2\ {\rm and}\ \sigma(e_3)=-e_4, \\
&\frak{g}^{\sigma}=\frak{u}(2,2)_{22} \quad {\rm if}\ 
\sigma(e_1)=e_2\ {\rm and}\ \sigma(e_3)=e_4.
\end{align*}
\end{itemize}
}
\end{rem}

Let $\frak{g}$ be a simple non-compact Lie algebra 
and $\sigma(\neq \theta)$ an involution commuting with $\theta$.
We illustrate by examples how to obtain all $\theta$-stable parabolic subalgebras $\frak{q}$ 
of $\frak{g}_\bb{C}$ such that $(\frak{g},\frak{g}^\sigma,\frak{q})$ 
satisfy the discrete decomposability condition.

\begin{ex}
{\rm 
Let $(\frak{g},\frak{g}^\sigma)=(\frak{so}(4,2),\frak{u}(2,1))$.
Fix a Cartan subalgebra $\frak{t}$ of 
$\frak{k}=\frak{so}(4)\oplus\frak{so}(2)$, 
a positive system $\Delta^+(\frak{k}_\bb{C},\frak{t}_\bb{C})$, 
and a basis of $\frak{t}_\bb{C}$ as in Setting \ref{somn}.
We use the letters $a'_i,e'_i$ instead of $a_i,e_i$ in Setting \ref{somn}.
Suppose that $\frak{g}^\sigma=\frak{u}(2,1)_1$ in the notation of 
Remark $\ref{exh}$.
We assume $\frak{q}$ is given by 
a $\Delta^+(\frak{k}_\bb{C},\frak{t}_\bb{C})$-dominant vector 
$a=a'_1e'_1+a'_2e'_2+a'_3e'_3$ as in Setting \ref{setq} (2).
According to Table \ref{holpairlist}, the pair $(\frak{g},\frak{g}^\sigma)$ is of holomorphic type.
Hence all holomorphic or anti-holomorphic parabolic subalgebras $\frak{q}$ 
satisfy the discrete decomposability condition.
According to Table \ref{holparablist}, $\frak{q}$ is holomorphic or anti-holomorphic if and only if $|a'_3|\geq a'_1$.
The pair $(\frak{so}(4,2),\frak{u}(2,1)_1)$ is listed in Table \ref{dslist}.
This says that $\frak{q}$ satisfies the discrete decomposability condition 
if $-a'_2\geq|a'_3|$.
The pair $(\frak{so}(4,2),\frak{u}(2,1))$ is also listed 
in Table \ref{isolist}.
This says that $\frak{q}$ satisfies the discrete decomposability condition 
if $a=a'_1e'_1$ or $a=a'_3e'_3$.
We also have an isomorphism 
$$
(\frak{so}(4,2),\frak{u}(2,1))\simeq
(\frak{su}(2,2),\frak{su}(2,1)\oplus\frak{su}(1)\oplus\frak{u}(1)).
$$
Regard $\frak{t}$ as a Cartan subalgebra of $\frak{su}(2,2)$ and 
define $a_i$ and $e_i$ as in Setting \ref{sumn}.
Then we have 
\[
a'_1=\frac{1}{2}(a_1-a_2+a_3-a_4),\ a'_2=\frac{1}{2}(-a_1+a_2+a_3-a_4),\ 
a'_3=\frac{1}{2}(a_1+a_2-a_3-a_4).\]
The pair $(\frak{su}(2,2),\frak{su}(2,1)\oplus\frak{su}(1)\oplus\frak{u}(1))$
is listed in Table \ref{dslist} and in Table \ref{isolist}. 
However, it turns out that no parabolic subalgebra other than 
that obtained in the previous argument 
satisfies the discrete decomposability condition.
As a consequence, 
a parabolic subalgebra 
$\frak{q}$ satisfies the discrete decomposability condition 
if and only if $\frak{q}$ is given by $a$ for 
$|a'_3|\geq a'_1$ or $-a'_2\geq|a'_3|$ 
under the assumptions in Settings \ref{setq} and \ref{somn}.
They correspond to 
$X_1,X_3,X_6,Y_1,Y_2,Y_5,Y_6,Z_1,Z_2,Z_3,Z_4,W$, or $U$. 
In all cases, the triples $(\frak{g},\frak{g}^\sigma,\frak{q})$
are of discrete series type.
}
\end{ex}

\begin{figure}[H]
\setlength{\unitlength}{0.0004in}
\begingroup\makeatletter\ifx\SetFigFont\undefined%
\gdef\SetFigFont#1#2#3#4#5{%
  \reset@font\fontsize{#1}{#2pt}%
  \fontfamily{#3}\fontseries{#4}\fontshape{#5}%
  \selectfont}%
\fi\endgroup%
{\renewcommand{\dashlinestretch}{30}
\begin{picture}(6696,5160)(0,-10)
\put(6348,4788){\ellipse{680}{680}}
\put(3338,1200){\ellipse{680}{680}}
\put(348,4788){\ellipse{680}{680}}
\put(648,347){\ellipse{680}{680}}
\put(1623,2013){\blacken\ellipse{140}{140}}
\put(1623,2013){\ellipse{140}{140}}
\put(3348,2013){\blacken\ellipse{140}{140}}
\put(3348,2013){\ellipse{140}{140}}
\put(5073,2013){\blacken\ellipse{140}{140}}
\put(5073,2013){\ellipse{140}{140}}
\put(2748,4798){\ellipse{680}{680}}
\put(971,2696){\ellipse{680}{680}}
\put(2138,2696){\ellipse{680}{680}}
\put(3373,2696){\ellipse{680}{680}}
\put(4569,2696){\ellipse{680}{680}}
\put(348,3741){\ellipse{680}{680}}
\put(6348,3741){\ellipse{680}{680}}
\put(5769,2696){\ellipse{680}{680}}
\put(5168,3741){\ellipse{680}{680}}
\put(1623,3741){\ellipse{680}{680}}
\path(1469,3421)(1152,2997)
\path(1469,3421)(1152,2997)
\path(492,3417)(773,2963)
\path(492,3417)(773,2963)
\path(601,3514)(1894,2933)
\path(601,3514)(1894,2933)
\path(2721,3591)(2340,2975)
\path(2721,3591)(2340,2975)
\path(1829,3466)(3081,2918)
\path(1829,3466)(3081,2918)
\path(1943,2401)(1692,2122)
\path(1943,2401)(1692,2122)
\path(1182,2423)(1519,2107)
\path(1182,2423)(1519,2107)
\path(1757,1955)(3072,1420)
\path(1757,1955)(3072,1420)
\path(2786,3586)(3201,3001)
\path(2786,3586)(3201,3001)
\path(3948,3563)(3509,3016)
\path(3948,3563)(3509,3016)
\path(4927,3466)(3637,2948)
\path(4927,3466)(3637,2948)
\path(6089,3516)(4824,2930)
\path(6089,3516)(4824,2930)
\path(4023,3563)(4404,2990)
\path(4023,3563)(4404,2990)
\path(4772,2412)(4992,2137)
\path(4772,2412)(4992,2137)
\path(6224,3426)(5953,2997)
\path(6224,3426)(5953,2997)
\path(5537,2442)(5187,2122)
\path(5537,2442)(5187,2122)
\path(4929,1970)(3582,1431)
\path(4929,1970)(3582,1431)
\path(5316,3406)(5586,2986)
\path(5316,3406)(5586,2986)
\path(1551,4445)(2756,3926)
\path(1551,4445)(2756,3926)
\path(2745,4448)(1859,4020)
\path(2745,4448)(1859,4020)
\path(4919,3983)(2745,4448)
\path(4919,3983)(2745,4448)
\path(4098,3938)(5133,4436)
\path(4098,3938)(5133,4436)
\path(5153,4443)(6157,4026)
\path(5153,4443)(6157,4026)
\path(3342,1890)(3348,1556)
\path(3342,1890)(3348,1556)
\path(6348,4438)(6348,4100)
\path(6348,4438)(6348,4100)
\path(3948,4442)(3948,3938)
\path(3948,4442)(3948,3938)
\path(2811,3921)(3936,4439)
\path(2811,3921)(3936,4439)
\path(348,4448)(348,4100)
\path(348,4448)(348,4100)
\path(552,4020)(1545,4448)
\path(552,4020)(1545,4448)
\path(5148,4436)(5148,4096)
\path(5148,4436)(5148,4096)
\path(1548,4445)(1548,4096)
\path(1548,4445)(1548,4096)
\path(3348,2333)(3348,2146)
\path(3348,2333)(3348,2146)
\put(1423,3663){\makebox(0,0)[lb]{\smash{{{\SetFigFont{8}{9.6}{\familydefault}{\mddefault}{\updefault}$Y_2$}}}}}
\put(2623,3663){\makebox(0,0)[lb]{\smash{{{\SetFigFont{8}{9.6}{\familydefault}{\mddefault}{\updefault}$Y_3$}}}}}
\put(3823,3663){\makebox(0,0)[lb]{\smash{{{\SetFigFont{8}{9.6}{\familydefault}{\mddefault}{\updefault}$Y_4$}}}}}
\put(5023,3663){\makebox(0,0)[lb]{\smash{{{\SetFigFont{8}{9.6}{\familydefault}{\mddefault}{\updefault}$Y_5$}}}}}
\put(6223,3663){\makebox(0,0)[lb]{\smash{{{\SetFigFont{8}{9.6}{\familydefault}{\mddefault}{\updefault}$Y_6$}}}}}
\put(823,2613){\makebox(0,0)[lb]{\smash{{{\SetFigFont{8}{9.6}{\familydefault}{\mddefault}{\updefault}$Z_1$}}}}}
\put(2023,2613){\makebox(0,0)[lb]{\smash{{{\SetFigFont{8}{9.6}{\familydefault}{\mddefault}{\updefault}$Z_2$}}}}}
\put(3223,2613){\makebox(0,0)[lb]{\smash{{{\SetFigFont{8}{9.6}{\familydefault}{\mddefault}{\updefault}$W$}}}}}
\put(4423,2613){\makebox(0,0)[lb]{\smash{{{\SetFigFont{8}{9.6}{\familydefault}{\mddefault}{\updefault}$Z_3$}}}}}
\put(5623,2613){\makebox(0,0)[lb]{\smash{{{\SetFigFont{8}{9.6}{\familydefault}{\mddefault}{\updefault}$Z_4$}}}}}
\put(3223,1113){\makebox(0,0)[lb]{\smash{{{\SetFigFont{8}{9.6}{\familydefault}{\mddefault}{\updefault}$U$}}}}}
\put(223,3663){\makebox(0,0)[lb]{\smash{{{\SetFigFont{8}{9.6}{\familydefault}{\mddefault}{\updefault}$Y_1$}}}}}
\put(223,4713){\makebox(0,0)[lb]{\smash{{{\SetFigFont{8}{9.6}{\familydefault}{\mddefault}{\updefault}$\!X_1$}}}}}
\put(1423,4713){\makebox(0,0)[lb]{\smash{{{\SetFigFont{8}{9.6}{\familydefault}{\mddefault}{\updefault}$\!X_2$}}}}}
\put(2623,4713){\makebox(0,0)[lb]{\smash{{{\SetFigFont{8}{9.6}{\familydefault}{\mddefault}{\updefault}$\!X_3$}}}}}
\put(3823,4713){\makebox(0,0)[lb]{\smash{{{\SetFigFont{8}{9.6}{\familydefault}{\mddefault}{\updefault}$\!X_4$}}}}}
\put(5023,4713){\makebox(0,0)[lb]{\smash{{{\SetFigFont{8}{9.6}{\familydefault}{\mddefault}{\updefault}$X_5$}}}}}
\put(6223,4713){\makebox(0,0)[lb]{\smash{{{\SetFigFont{8}{9.6}{\familydefault}{\mddefault}{\updefault}$\!X_6$}}}}}
\put(1098,313){\makebox(0,0)[lb]{\smash{{{\SetFigFont{10}{12.0}{\familydefault}{\mddefault}{\updefault}: $A_{\mathfrak{q}}(\lambda)|_{\mathfrak{u}(2,1)}$ is discretely decomposable}}}}}
\end{picture}
}
\caption{}
\label{fig2}
\end{figure}

\begin{ex}
{\rm 
Let $(\frak{g},\frak{g}^\sigma)=(\frak{su}(2,2),\frak{sp}(1,1))$.
Fix a Cartan subalgebra $\frak{t}$ of 
$\frak{k}=\frak{su}(2)\oplus\frak{su}(2)\oplus\frak{u}(1)$, 
a positive system $\Delta^+(\frak{k}_\bb{C},\frak{t}_\bb{C})$, 
and $e_i$ as in Setting \ref{sumn}.
We assume $\frak{q}$ is given by 
$\Delta^+(\frak{k}_\bb{C},\frak{t}_\bb{C})$-dominant vector 
$a=a_1e_1+a_2e_2+a_3e_3+a_4e_4$ as in Setting \ref{setq} (2).
The pair $(\frak{su}(2,2),\frak{sp}(1,1))$ is listed in Table \ref{dslist}.
This says that $\frak{q}$ satisfies the discrete decomposability condition 
if $a_1\geq a_3\geq a_4\geq a_2$ or $a_3\geq a_1\geq a_2\geq a_4$.
The pair $(\frak{su}(2,2),\frak{sp}(1,1))$ is also listed 
in Table \ref{isolist} and 
we have an isomorphism 
$$
(\frak{su}(2,2),\frak{sp}(1,1))\simeq
(\frak{so}(4,2),\frak{so}(4,1)).
$$
It turns out that $\frak{q}$ satisfies the discrete decomposability 
condition if and only if $a_1\geq a_3\geq a_4\geq a_2$ or 
$a_3\geq a_1\geq a_2\geq a_4$ 
under the assumptions in Settings \ref{setq} and \ref{sumn}.
They correspond to 
$X_3,X_4,Y_2,Y_3,Y_4,Y_5,Z_1,Z_2,Z_3,Z_4,W$, or $U$. 
In all cases, the triples $(\frak{g},\frak{g}^\sigma,\frak{q})$
are of discrete series type.
}
\end{ex}

\begin{figure}[H]
\setlength{\unitlength}{0.0004in}
\begingroup\makeatletter\ifx\SetFigFont\undefined%
\gdef\SetFigFont#1#2#3#4#5{%
  \reset@font\fontsize{#1}{#2pt}%
  \fontfamily{#3}\fontseries{#4}\fontshape{#5}%
  \selectfont}%
\fi\endgroup%
{\renewcommand{\dashlinestretch}{30}
\begin{picture}(6137,5150)(0,-10)
\put(3763,3741){\ellipse{680}{680}}
\put(2563,3741){\ellipse{680}{680}}
\put(4963,3741){\ellipse{680}{680}}
\put(1363,3741){\ellipse{680}{680}}
\put(2563,4788){\ellipse{680}{680}}
\put(3763,4788){\ellipse{680}{680}}
\put(3115,1200){\ellipse{680}{680}}
\put(3115,2696){\ellipse{680}{680}}
\put(1915,2696){\ellipse{680}{680}}
\put(4315,2696){\ellipse{680}{680}}
\put(5515,2696){\ellipse{680}{680}}
\put(715,2696){\ellipse{680}{680}}
\put(425,347){\ellipse{680}{680}}
\put(1400,2013){\blacken\ellipse{140}{140}}
\put(1400,2013){\ellipse{140}{140}}
\put(3125,2013){\blacken\ellipse{140}{140}}
\put(3125,2013){\ellipse{140}{140}}
\put(4850,2013){\blacken\ellipse{140}{140}}
\put(4850,2013){\ellipse{140}{140}}
\path(1222,3424)(913,2981)
\path(1222,3424)(913,2981)
\path(269,3417)(535,2990)
\path(269,3417)(535,2990)
\path(378,3514)(1671,2933)
\path(378,3514)(1671,2933)
\path(2401,3442)(2117,2975)
\path(2401,3442)(2117,2975)
\path(1585,3481)(2854,2920)
\path(1585,3481)(2854,2920)
\path(1720,2401)(1477,2116)
\path(1720,2401)(1477,2116)
\path(959,2423)(1297,2116)
\path(959,2423)(1297,2116)
\path(1534,1955)(2849,1420)
\path(1534,1955)(2849,1420)
\path(2686,3418)(2977,3004)
\path(2686,3418)(2977,3004)
\path(3619,3430)(3265,2995)
\path(3619,3430)(3265,2995)
\path(4747,3484)(3379,2911)
\path(4747,3484)(3379,2911)
\path(5960,3560)(4601,2930)
\path(5960,3560)(4601,2930)
\path(3901,3415)(4181,2990)
\path(3901,3415)(4181,2990)
\path(4537,2419)(4769,2131)
\path(4537,2419)(4769,2131)
\path(6077,3548)(5730,2997)
\path(6077,3548)(5730,2997)
\path(5314,2442)(4964,2108)
\path(5314,2442)(4964,2108)
\path(4706,1970)(3359,1431)
\path(4706,1970)(3359,1431)
\path(5098,3418)(5363,2986)
\path(5098,3418)(5363,2986)
\path(1328,4445)(2350,4003)
\path(1328,4445)(2350,4003)
\path(2522,4448)(1564,4012)
\path(2522,4448)(1564,4012)
\path(4723,3973)(2522,4448)
\path(4723,3973)(2522,4448)
\path(3988,3991)(4910,4436)
\path(3988,3991)(4910,4436)
\path(4930,4443)(6044,3977)
\path(4930,4443)(6044,3977)
\path(6125,4438)(6125,4100)
\path(6125,4438)(6125,4100)
\path(4925,4436)(4924,4090)
\path(4925,4436)(4924,4090)
\path(3725,4442)(3724,4084)
\path(3725,4442)(3724,4084)
\path(2770,4000)(3713,4439)
\path(2770,4000)(3713,4439)
\path(1325,4445)(1324,4081)
\path(1325,4445)(1324,4081)
\path(125,4448)(125,4100)
\path(125,4448)(125,4100)
\path(329,4020)(1322,4448)
\path(329,4020)(1322,4448)
\path(3125,2359)(3125,2138)
\path(3125,2359)(3125,2138)
\path(3125,1883)(3125,1556)
\path(3125,1883)(3125,1556)
\put(1200,3663){\makebox(0,0)[lb]{\smash{{{\SetFigFont{8}{9.6}{\familydefault}{\mddefault}{\updefault}$Y_2$}}}}}
\put(2400,3663){\makebox(0,0)[lb]{\smash{{{\SetFigFont{8}{9.6}{\familydefault}{\mddefault}{\updefault}$Y_3$}}}}}
\put(3600,3663){\makebox(0,0)[lb]{\smash{{{\SetFigFont{8}{9.6}{\familydefault}{\mddefault}{\updefault}$Y_4$}}}}}
\put(4800,3663){\makebox(0,0)[lb]{\smash{{{\SetFigFont{8}{9.6}{\familydefault}{\mddefault}{\updefault}$Y_5$}}}}}
\put(6000,3663){\makebox(0,0)[lb]{\smash{{{\SetFigFont{8}{9.6}{\familydefault}{\mddefault}{\updefault}$Y_6$}}}}}
\put(600,2613){\makebox(0,0)[lb]{\smash{{{\SetFigFont{8}{9.6}{\familydefault}{\mddefault}{\updefault}$Z_1$}}}}}
\put(3000,2613){\makebox(0,0)[lb]{\smash{{{\SetFigFont{8}{9.6}{\familydefault}{\mddefault}{\updefault}$W$}}}}}
\put(4200,2613){\makebox(0,0)[lb]{\smash{{{\SetFigFont{8}{9.6}{\familydefault}{\mddefault}{\updefault}$Z_3$}}}}}
\put(5400,2613){\makebox(0,0)[lb]{\smash{{{\SetFigFont{8}{9.6}{\familydefault}{\mddefault}{\updefault}$Z_4$}}}}}
\put(3000,1113){\makebox(0,0)[lb]{\smash{{{\SetFigFont{8}{9.6}{\familydefault}{\mddefault}{\updefault}$U$}}}}}
\put(0,3663){\makebox(0,0)[lb]{\smash{{{\SetFigFont{8}{9.6}{\familydefault}{\mddefault}{\updefault}$Y_1$}}}}}
\put(0,4713){\makebox(0,0)[lb]{\smash{{{\SetFigFont{8}{9.6}{\familydefault}{\mddefault}{\updefault}$\!X_1$}}}}}
\put(1200,4713){\makebox(0,0)[lb]{\smash{{{\SetFigFont{8}{9.6}{\familydefault}{\mddefault}{\updefault}$\!X_2$}}}}}
\put(3600,4713){\makebox(0,0)[lb]{\smash{{{\SetFigFont{8}{9.6}{\familydefault}{\mddefault}{\updefault}$\!X_4$}}}}}
\put(4800,4713){\makebox(0,0)[lb]{\smash{{{\SetFigFont{8}{9.6}{\familydefault}{\mddefault}{\updefault}$X_5$}}}}}
\put(6000,4713){\makebox(0,0)[lb]{\smash{{{\SetFigFont{8}{9.6}{\familydefault}{\mddefault}{\updefault}$\!X_6$}}}}}
\put(1800,2613){\makebox(0,0)[lb]{\smash{{{\SetFigFont{8}{9.6}{\familydefault}{\mddefault}{\updefault}$Z_2$}}}}}
\put(2400,4713){\makebox(0,0)[lb]{\smash{{{\SetFigFont{8}{9.6}{\familydefault}{\mddefault}{\updefault}$\!X_3$}}}}}
\put(875,313){\makebox(0,0)[lb]{\smash{{{\SetFigFont{10}{12.0}{\familydefault}{\mddefault}{\updefault}: $A_{\mathfrak{q}}(\lambda)|_{\mathfrak{sp}(1,1)}$ is discretely decomposable}}}}}
%\put(875,313){\makebox(0,0)[lb]{\smash{{{\SetFigFont{8}{12.0}{\familydefault}{\mddefault}{\updefault}: $A_{\mathfrak{q}}(\lambda)|_{\mathfrak{sp}(1,1)}$ is discrertely decomposable}}}}}
\end{picture}
}
\caption{}
\label{fig3}
\end{figure}

We see from our classification that, in most cases,
 the center of $L=N_G(\frak{q})$ is contained in $K$
 if $A_\frak{q}(\lambda)$ is discretely decomposable as
 a $(\frak{g}^\sigma,K^\sigma)$-module.
We classify the cases where
 $L$ has a split center, or equivalently
 $\lambda$ can be non-zero on $\frak{l}\cap \frak{p}$.
\begin{cor}
Let $(\frak{g},\frak{g}^\sigma)$ be an irreducible symmetric pair
 such that $\sigma\neq \theta$.
Suppose that $A_\frak{q}(\lambda)$ is non-zero
 and discretely decomposable as
 a $(\frak{g}^\sigma,K^\sigma)$-module with $\lambda$
 in the weakly fair range.
Then $L$ has a split center if and only if 
\begin{align*}
(\frak{g},\frak{g}^{\sigma},\frak{l})
=&(\frak{sl}(2n,\bb{C}),\, \frak{sp}(n,\bb{C}),\, 
\frak{sl}(2n-1,\bb{C})\oplus\bb{C}),\\ 
&(\frak{sl}(2n,\bb{C}),\, \frak{su}^*(2n),\, 
\frak{sl}(2n-1,\bb{C})\oplus\bb{C}),\\ 
&(\frak{so}(2n,\bb{C}),\, \frak{so}(2n-1,\bb{C}),\, 
\frak{sl}(n,\bb{C})\oplus\bb{C}), \ {\rm or}\\
&(\frak{so}(2n,\bb{C}),\, \frak{so}(2n-1,1),\, 
\frak{sl}(n,\bb{C})\oplus\bb{C}).
\end{align*}
\end{cor}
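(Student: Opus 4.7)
The plan is to use a rank-theoretic obstruction to eliminate most of the triples in Theorem~\ref{ddclass}, and then inspect case-by-case what remains.

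First, since $\mathfrak{l}=Z_\mathfrak{g}(a)$ with $a\in\sqrt{-1}\mathfrak{t}\subset\sqrt{-1}\mathfrak{k}$, one checks that $\mathfrak{z}(\mathfrak{l})$ lies in some $\theta$-stable Cartan of $\mathfrak{l}$ of the form $\mathfrak{t}+\mathfrak{a}_\mathfrak{l}$, where $\mathfrak{a}_\mathfrak{l}$ is a subspace of the split part $\mathfrak{a}$ of a fundamental Cartan $\mathfrak{h}=\mathfrak{t}+\mathfrak{a}$ of $\mathfrak{g}$. In particular $\mathfrak{z}(\mathfrak{l})\cap\mathfrak{p}\subset\mathfrak{a}$, and so whenever $\mathrm{rank}\,\mathfrak{g}_\bb{C}=\mathrm{rank}\,\mathfrak{k}_\bb{C}$ we have $\mathfrak{a}=0$ and $L$ has compact center. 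This immediately rules out cases (1) (compact $\mathfrak{g}$), (4) (holomorphic type, which forces $\mathfrak{g}$ Hermitian and hence equal rank), and (3) (tensor product with $\mathfrak{g}'$ Hermitian, so $\mathfrak{g}'\oplus\mathfrak{g}'$ is equal rank) of Theorem~\ref{ddclass}.

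For the remaining triples, all coming from Theorem~\ref{ddclass}(5) (Tables~\ref{dslist} and \ref{isolist}), I would separate the entries where $\mathfrak{g}$ is a complex simple Lie algebra viewed as a real Lie algebra from those where $\mathfrak{g}$ is a real non-complex simple Lie algebra of non-equal rank. For complex $\mathfrak{g}$, every $\theta$-stable parabolic is in particular a complex parabolic and $\mathfrak{l}$ is a complex Levi; its complex center $\mathfrak{z}_\bb{C}(\mathfrak{l})$ decomposes under $\theta$ into parts of equal real dimension in $\mathfrak{k}$ and $\mathfrak{p}$, so $L$ has a split center if and only if $\mathfrak{z}_\bb{C}(\mathfrak{l})\ne 0$, that is, iff $\mathfrak{l}$ is a proper non-Cartan complex Levi. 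The Cartan (Borel) case is ruled out because $\sigma\ne\theta$ together with $\mathfrak{g}$ being non-equal rank contradicts Corollary~\ref{borel}. It then suffices to scan the complex-$\mathfrak{g}$ entries of the tables, read off $\mathfrak{l}$ from the defining element $a$, and verify that exactly the four triples in the statement arise.

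For real non-complex simple $\mathfrak{g}$ of non-equal rank appearing in the tables (for instance $\mathfrak{sl}(n,\bb{R})$, $\mathfrak{su}^*(2n)$, $\mathfrak{so}(p,q)$ with $p,q$ both odd, and certain exceptional algebras), I would check each entry to confirm that the prescribed conditions on $a$ force $\mathfrak{l}$ to be a product of factors of the form $\mathfrak{u}(p',q')$, $\mathfrak{so}(p',q')$, or $\mathfrak{sp}(p',q')$, each of which has compact center as a real Lie algebra. This final step is the main obstacle: although only finitely many entries need to be examined, a uniform structural argument would be cleaner than brute-force inspection. A plausible route is to show that for these entries the semisimple part of $\mathfrak{l}$ absorbs every direction of $\mathfrak{a}$ centralizing $\mathfrak{t}$, thereby leaving $\mathfrak{z}(\mathfrak{l})\cap\mathfrak{a}=0$.
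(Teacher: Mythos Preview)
Your approach is essentially the paper's: reduce via the equal-rank obstruction and then inspect the remaining triples from the classification. However, you have substantially overestimated the final step. The examples you list for the real non-complex case (\,$\mathfrak{sl}(n,\bb R)$, $\mathfrak{su}^*(2n)$, exceptional algebras\,) do \emph{not} appear in Tables~\ref{dslist} or~\ref{isolist} at all --- this is precisely the content of Theorem~\ref{notdd}(ii)(2). Once you strip out the equal-rank entries from the tables, only five symmetric pairs remain: the four complex ones in the statement and the single real case $(\frak{so}(2m+1,2n+1),\frak{so}(2m+1,k)\oplus\frak{so}(2n-k+1))$. For the latter, the table already forces $a_{m+1}=\cdots=a_{m+n}=0$, so $\frak l$ is a direct sum of $\frak{so}(2l-1,2n+1)$ (for some $l\geq 1$) with compact factors, and its center is compact. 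For the four complex entries, the constraints on $a$ in Table~\ref{isolist} immediately give $\frak l=\frak{sl}(2n-1,\bb C)\oplus\bb C$ or $\frak{sl}(n,\bb C)\oplus\bb C$, and your observation that a nontrivial complex center is automatically split finishes the argument. No uniform structural argument is needed; there is exactly one real case to check and the table hands you $\frak l$.
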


\begin{proof}
If ${\rm rank}\,\frak{g}_\bb{C}={\rm rank}\,\frak{k}_\bb{C}$, then 
a fundamental Cartan subalgebra $\frak{h}$ of $\frak{l}$ is 
contained in $\frak{k}$.
In this case, 
 the center of $L$ is contained in $K$.

Suppose that ${\rm rank}\,\frak{g}_\bb{C}>{\rm rank}\,\frak{k}_\bb{C}$.
Then Theorem \ref{ddclass} implies that
 the pair $(\frak{g},\frak{g}^\sigma)$ is
 isomorphic to
 $(\frak{so}(2m+1,2n+1),\, \frak{so}(2m+1,k)\oplus\frak{so}(2n-k+1)),
 (\frak{sl}(2n,\bb{C}),\, \frak{sp}(n,\bb{C}))$,
 $(\frak{sl}(2n,\bb{C}),\, \frak{su}^*(2n))$,
 $(\frak{so}(2n,\bb{C}),\, \frak{so}(2n-1,\bb{C}))$, or
 $(\frak{so}(2n,\bb{C}),\, \frak{so}(2n-1,1))$.

Let $(\frak{g},\frak{g}^\sigma)
=(\frak{so}(2m+1,2n+1),\, \frak{so}(2m+1,k)\oplus\frak{so}(2n-k+1))$. 
By Theorem \ref{ddclass}, we may assume that
 $\frak{q}$ is given by
 $a$ with $a_{m+1}=\cdots=a_{m+n}=0$ (see Table \ref{isolist}).
Then $\frak{l}$ is a direct sum of $\frak{so}(2l-1,2n+1)(l\geq 1)$
 and compact factors.
Hence the center of $L$ is contained in $K$.

For the remaining four pairs $(\frak{g},\frak{g}^\sigma)$, 
 we have $\frak{l}=\frak{sl}(2n-1,\bb{C})\oplus\bb{C}$
 if $\frak{g}=\frak{sl}(2n,\bb{C})$ and
 $\frak{l}=\frak{sl}(n,\bb{C})\oplus\bb{C}$
 if $\frak{g}=\frak{so}(2n,\bb{C})$ 
 (see Table \ref{isolist}). 
Therefore $L$ has a split center in these cases.
\end{proof}

As another consequence of Theorem \ref{ddclass}, 
we get all the pairs $(\frak{g},\frak{g}^\sigma)$ 
which do not have 
discretely decomposable restrictions $A_\frak{q}(\lambda)|_{\frak{g}^\sigma}$.
We use the notation of \cite[Chapter X]{hel} for 
exceptional Lie algebras.

\begin{thm}
\label{notdd}
Let $(\frak{g},\frak{g}^\sigma)$ be an irreducible symmetric pair
such that $\frak{g}$ is non-compact and 
that $\sigma$ $(\neq \theta)$ commutes with $\theta$.
The following two conditions on the pair $(\frak{g},\frak{g}^\sigma)$ 
are equivalent.
\begin{enumerate}
\item[{\rm (i)}]
There is no $\theta$-stable parabolic subalgebra 
$\frak{q}$ $(\neq \frak{g}_\bb{C})$ such that 
the triple $(\frak{g},\frak{g}^\sigma,\frak{q})$ satisfies 
the discrete decomposability condition (see Theorem \ref{crit}).
\item[{\rm (ii)}]
One of the following cases occurs.
\begin{enumerate}
\item[$(1)$]
$\frak{g}\simeq\frak{g}'\oplus\frak{g}'$ with 
$\frak{g}'$ not of Hermitian type.
\item[$(2)$]
The simple Lie algebra 
$\frak{g}$ is isomorphic to 
$\frak{sl}(n,\bb{R}) (n\geq 5)$, 
$\frak{so}(1, n)$, 
$\frak{su}^*(2n)$, 
$\frak{sl}(2n+1,\bb{C})$, 
$\frak{so}(2n+1,\bb{C})$, 
$\frak{sp}(n,\bb{C})$, 
$\frak{g}_{2(2)}$,
$\frak{e}_{6(6)}$,
$\frak{e}_{6(-26)}$,
$\frak{e}_{7(7)}$, 
$\frak{e}_{8(8)}$,
$\frak{g}_{2}^\bb{C}$,
$\frak{f}_{4}^\bb{C}$,
$\frak{e}_{6}^\bb{C}$,
$\frak{e}_{7}^\bb{C}$,
or 
$\frak{e}_{8}^\bb{C}$.
\item[$(3)$]
$\frak{k}^\sigma+\sqrt{-1}\frak{k}^{-\sigma}$ is 
a split real form of $\frak{k}_\bb{C}$.
\item[$(4)$]
The pair $(\frak{g},\frak{g}^\sigma)$ is isomorphic to one 
of those listed in 
Table \ref{notddpair}.
\end{enumerate}
\end{enumerate}
\end{thm}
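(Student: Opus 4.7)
The plan is to derive Theorem~\ref{notdd} as a direct consequence of Theorem~\ref{ddclass}: condition (ii) should be read as the precise complement, among irreducible symmetric pairs with $\sigma\neq\theta$ and $\mathfrak{g}$ non-compact, of the set of pairs $(\mathfrak{g},\mathfrak{g}^\sigma)$ that appear as the first two slots of some triple satisfying the discrete decomposability condition.

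For the implication (ii) $\Rightarrow$ (i), I would handle each sub-case with a ready-made obstruction. In case (1), the tensor product situation reduces by Theorem~\ref{tensor} to requiring $\mathfrak{g}'$ of Hermitian type, so if $\mathfrak{g}'$ is not Hermitian no nonzero $\mathfrak{q}$ works. In case (3), $\mathfrak{k}^\sigma+\sqrt{-1}\mathfrak{k}^{-\sigma}$ being a split real form of $\mathfrak{k}_{\mathbb{C}}$ is equivalent to $\mathfrak{t}^\sigma=0$, so Proposition~\ref{split} rules out every $\mathfrak{q}\neq \mathfrak{g}_{\mathbb{C}}$. In case (2), one inspects the explicit list: none of $\mathfrak{sl}(n,\mathbb{R})$ $(n\geq 5)$, $\mathfrak{so}(1,n)$, $\mathfrak{su}^*(2n)$, the complex simple Lie algebras, or the listed exceptional real forms occurs as $\mathfrak{g}$ in any row of Tables~\ref{holpairlist}, \ref{dslist}, or \ref{isolist}, and none is of Hermitian type (so case (4) of Theorem~\ref{ddclass} is vacuous for them); hence no nontrivial triple satisfies the discrete decomposability condition. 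Case (4) of (ii) is a direct inspection against Table~\ref{notddpair}.

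For the converse (i) $\Rightarrow$ (ii), I argue contrapositively: assuming $(\mathfrak{g},\mathfrak{g}^\sigma)$ does not fall under (1)--(4) of (ii), I produce an explicit $\mathfrak{q}\neq\mathfrak{g}_{\mathbb{C}}$ yielding discrete decomposability. If $(\mathfrak{g},\mathfrak{g}^\sigma)$ is of holomorphic type, pick any holomorphic $\mathfrak{q}$ from Table~\ref{holparablist} and invoke Proposition~\ref{holdd}. Otherwise, since the pair is not in Table~\ref{notddpair} and does not satisfy (1)--(3), inspection of Berger's list leaves precisely the pairs appearing in Tables~\ref{dslist} and \ref{isolist}; choose $\mathfrak{q}$ given by any admissible defining vector $a$ from the corresponding row and apply Theorem~\ref{ddclass}.

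The main obstacle is the combinatorial completeness check: verifying that every irreducible symmetric pair with $\mathfrak{g}$ non-compact and $\sigma\neq\theta$ not covered by (1)--(3) either yields a nontrivial $\mathfrak{q}$ through Theorem~\ref{ddclass} or is recorded in Table~\ref{notddpair}. Concretely, the delicate step is to walk through Berger's classification---restricted to the non-split, non-rank-one, non-complex simple real forms that are not of Hermitian type, together with their $\theta$-commuting involutions $\sigma$---and to match each pair either to a row of the classification tables or to Table~\ref{notddpair}. The exceptional Lie algebras are where one must be most careful, since there one cannot appeal to the rank families that dominate Tables~\ref{dslist} and \ref{isolist}, and the dominance test via the Satake-diagram criterion of Proposition~\ref{dom} must be executed case by case.
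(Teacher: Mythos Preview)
Your proposal is correct and matches the paper's approach: the paper does not give a separate proof of Theorem~\ref{notdd} but simply states it ``as another consequence of Theorem~\ref{ddclass},'' which is precisely the derivation you outline. One minor imprecision worth fixing: in your handling of case~(2) you write ``the complex simple Lie algebras,'' but case~(2) lists only $\mathfrak{sl}(2n+1,\mathbb{C})$, $\mathfrak{so}(2n+1,\mathbb{C})$, $\mathfrak{sp}(n,\mathbb{C})$, and the exceptional complex algebras---indeed $\mathfrak{sl}(2n,\mathbb{C})$ and $\mathfrak{so}(2n,\mathbb{C})$ do appear in Table~\ref{isolist}, so you should restrict your remark to the specific complex algebras in the list.
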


%%%%%%%%%%%%%%%%%%%%%%%%%%%%%%%%%%%%%%%%%%%%%%%%%%%%%%%%%

\section{$K$-conjugacy classes of Reductive Symmetric Pairs}
\label{sec:Kconj}

In \cite{ber}, irreducible symmetric pairs 
$(\frak{g},\frak{g}^\sigma)$ are classified 
up to outer automorphisms of $\mathfrak {g}$.
For our purpose,
 we need its refinement.
To classify the $K$-conjugacy classes of $(\frak{g},\frak{g}^\sigma)$, 
we have to tell
whether or not two symmetric pairs 
$(\frak{g},\frak{g}^{\sigma_1})$ and $(\frak{g},\frak{g}^{\sigma_2})$ 
 are $K$-conjugate to each other
 when $\frak{g}^{\sigma_1}$ is isomorphic to $\frak{g}^{\sigma_2}$ 
by an outer automorphism of ${\mathfrak {g}}$.

For this, we fix a reductive Lie algebra $\frak{g}'$ with a 
Cartan decomposition $\frak{g}'=\frak{k}'+\frak{p}'$.
Denote by ${\cal S}\equiv {\cal S}(\frak{g},\frak{g}')$ 
the set of $K$-conjugacy classes of involutions $\sigma$ of 
$\frak{g}$ such that $\sigma$ commutes with $\theta$ and 
that there is an isomorphism 
$\varphi:\frak{g}^\sigma\xrightarrow{\sim}\frak{g}'$ of Lie algebras 
with $\varphi(\frak{k}^\sigma)=\frak{k}'$.
Similarly, denote by ${\cal T}\equiv {\cal T}(\frak{k},\frak{k}')$ 
the set of $K$-conjugacy classes of involutions $\sigma$ of 
$\frak{k}$ such that $\frak{k}^\sigma$ is isomorphic to ${\frak{k}'}$.  
We allow the case where $\sigma$ is the identity
 in the definition of ${\cal T}$.  
Then the restriction $\sigma \mapsto \sigma|_{\frak {k}}$
 induces a map:
\begin{align}
\label{Krest}
\phi:{\cal S}\to {\cal T}.  
\end{align} 
The aim of this section is to classify the set ${\cal S}$.
This is carried out by studying ${\cal T}$ and $\phi$.

First let us study ${\cal T}\equiv{\cal T}(\frak{k},\frak{k}')$.
There is a one-to-one correspondence between 
${\cal T}$
and the set of $K$-conjugacy
classes of real forms 
$\frak{k}^{\sigma}+\sqrt{-1}\frak{k}^{-\sigma}$ of $\frak{k}_\bb{C}$ 
such that $\frak{k}^\sigma\simeq\frak{k}'$.  
Therefore the elements of ${\cal T}$ correspond to 
the Satake diagrams 
of real forms $\frak{k}_0$ 
of $\frak{k}_\bb{C}$ such that a maximal compact subalgebra of $\frak{k}_0$
is isomorphic to $\frak{k}'$.
For a simple compact Lie algebra $\frak{k}$, 
we see from the list of Satake diagrams (\cite[Chapter X]{hel}) that 
\begin{align*}
|{\cal T}|=3 \quad {\rm if}\ 
(\frak{k},\frak{k}')
\simeq
&(\frak{so}(8),\frak{u}(4))\simeq(\frak{so}(8),\frak{so}(2)\oplus\frak{so}(6)), \\
&(\frak{so}(8),\frak{so}(7)), \\
&(\frak{so}(8),\frak{so}(3)\oplus\frak{so}(5)), \\
|{\cal T}|=2 \quad {\rm if}\ 
(\frak{k},\frak{k}')
\simeq
&(\frak{so}(4n),\frak{u}(2n))\quad (n\geq 3),
\end{align*}
and $|{\cal T}|\leq 1$ if otherwise.
For $\frak{k}$ not simple, there may exist 
outer automorphisms which interchange simple factors.
In such a case, 
 the cardinality of ${\cal T}$ may also be greater than one.

Second we study the map $\phi$.
\begin{prop}
\label{Kconj}
Let $x\in {\cal T}$.
Suppose that the fiber $\phi^{-1}(x)$ is non-empty.
Choose an involution $\sigma$ of $\frak{g}$ which represents 
an element of $\phi^{-1}(x)$.
Then one of the following three cases occurs.
\begin{itemize}
\item[(1)]
$|\phi^{-1}(x)|=2$ and $\{\sigma, \theta\sigma\}$
is a complete set of representatives of $\phi^{-1}(x)$.
In particular, $\frak{g}^\sigma$ and $\frak{g}^{\theta\sigma}$ 
are isomorphic as Lie algebras, but they are not $K$-conjugate to each other.
\item[(2)]
$|\phi^{-1}(x)|=1$ and $\frak{g}^\sigma$
is not isomorphic to $\frak{g}^{\theta\sigma}$
as a Lie algebra.
\item[(3)]
$|\phi^{-1}(x)|=1$ and $\frak{g}^\sigma$ is $K$-conjugate to 
$\frak{g}^{\theta\sigma}$.
\end{itemize}
\end{prop}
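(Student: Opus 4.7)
The plan is to show that the fiber $\phi^{-1}(x)\subset\mathcal{S}$ always lies inside $\{[\sigma],[\theta\sigma]\}$; the trichotomy (1)--(3) then depends only on whether $\theta\sigma\in\mathcal{S}(\mathfrak{g},\mathfrak{g}')$ and whether $\theta\sigma$ is $K$-conjugate to $\sigma$. Observe first that $\theta\sigma$ is always an involution commuting with $\theta$ (since $\theta$ and $\sigma$ commute) and that $\phi(\theta\sigma)=(\theta\sigma)|_\mathfrak{k}=\sigma|_\mathfrak{k}=x$, so it is a candidate representative of the fiber. Conversely, let $\sigma'$ be any representative of $\phi^{-1}(x)$. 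After replacing $\sigma'$ by a $K$-conjugate I may arrange $\sigma'|_\mathfrak{k}=\sigma|_\mathfrak{k}=:\tau$, and then set $\mu:=\sigma\sigma'$. This $\mu$ is an automorphism of $\mathfrak{g}$ commuting with $\theta$ whose restriction to $\mathfrak{k}$ is the identity; in particular $\mu$ preserves $\mathfrak{p}$, commutes with $\operatorname{ad}(\mathfrak{k})$ on $\mathfrak{p}$, and satisfies $[\mu Y,\mu Z]=[Y,Z]$ for $Y,Z\in\mathfrak{p}$.

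The technical core is the description of the group $M:=\{\mu\in\operatorname{Aut}(\mathfrak{g}):\mu|_\mathfrak{k}=\operatorname{id}\}$ together with the involutivity constraint $\sigma\mu\sigma=\mu^{-1}$ coming from $(\mu\sigma)^{2}=\operatorname{id}$. When $\mathfrak{g}$ is simple and non-Hermitian, $\mathfrak{p}$ is real-irreducible as a $\mathfrak{k}$-module with $\operatorname{End}_\mathfrak{k}(\mathfrak{p})=\mathbb{R}$; Schur's lemma and the bracket condition force $\mu|_\mathfrak{p}=\pm\operatorname{id}$, hence $\mu\in\{\operatorname{id},\theta\}$ and $\sigma'\in\{\sigma,\theta\sigma\}$. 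When $\mathfrak{g}$ is simple Hermitian, $\operatorname{End}_\mathfrak{k}(\mathfrak{p})=\mathbb{C}$ and I expect $M=\{\operatorname{Ad}(\exp tZ):t\in\mathbb{R}\}\simeq U(1)$, where $Z$ spans $\mathfrak{z}(\mathfrak{k})$. Here the outcome depends on $\tau(Z)=\pm Z$. In the holomorphic case $\tau(Z)=Z$ the map $\sigma$ commutes with $M$, so $\mu^{2}=\operatorname{id}$ and again $\mu\in\{\operatorname{id},\theta\}$. In the non-holomorphic case $\tau(Z)=-Z$ the map $\sigma$ inverts $M$, every $\mu\in M$ is admissible, and I would show by a direct conjugation calculation that $\operatorname{Ad}(\exp sZ)\in K$ moves the resulting $U(1)$-family of $\sigma'$ by $\mu\mapsto\operatorname{Ad}(\exp 2sZ)\cdot\mu$, collapsing it to a single $K$-orbit containing $\theta\sigma$. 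The tensor product case $\mathfrak{g}\simeq\mathfrak{g}'\oplus\mathfrak{g}'$ with $\sigma$ the switch reduces to the simple case on $\mathfrak{g}'$: an element of $M$ takes the form $\mu_{1}\oplus\mu_{2}$ with $\mu_{i}\in M'$, the switch constraint gives $\mu_{2}=\mu_{1}^{-1}$, and the simple-case analysis applied to $\mu_{1}$ finishes the argument.

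Once the fiber has been squeezed into $\{[\sigma],[\theta\sigma]\}$, the case division is transparent: case (2) occurs when $\mathfrak{g}^{\theta\sigma}\not\cong\mathfrak{g}^{\sigma}$, so $[\theta\sigma]\notin\mathcal{S}(\mathfrak{g},\mathfrak{g}')$ and the fiber is the singleton $\{[\sigma]\}$; case (3) occurs when $\mathfrak{g}^{\theta\sigma}\cong\mathfrak{g}^{\sigma}$ and moreover $[\theta\sigma]=[\sigma]$ in $\mathcal{S}$ (which is automatic in the Hermitian non-holomorphic situation above); case (1) is the remaining possibility, with both classes lying in $\mathcal{S}$ but distinct. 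The main obstacle I anticipate is the Hermitian non-holomorphic subcase: one must verify, using the precise action $\sigma(Z)=-Z$, that the $U(1)$-family of admissible $\mu$ is actually swept out by the single one-parameter subgroup $\{\operatorname{Ad}(\exp sZ)\}\subset\operatorname{Ad}(K)$ rather than by a proper subgroup, and one must carefully distinguish $\operatorname{Aut}(\mathfrak{k})$-conjugation (which governs $\mathcal{T}$) from $K$-conjugation of the whole $\mathfrak{g}$ (which governs $\mathcal{S}$) when reducing to the common restriction $\tau$ at the beginning.
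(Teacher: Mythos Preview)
Your proposal is correct and follows essentially the same route as the paper. The paper reduces Proposition~\ref{Kconj} to a lemma stating that if two involutions $\sigma_1,\sigma_2$ of a simple $\mathfrak{g}$ commute with $\theta$ and agree on $\mathfrak{k}$, then $\sigma_1$ is $K$-conjugate to $\sigma_2$ or $\sigma_1=\theta\sigma_2$; its proof sets $\tau=\sigma_1\sigma_2$, uses Schur's lemma on $\mathfrak{p}$ (resp.\ on $\mathfrak{p}_\pm$), and in the Hermitian non-holomorphic case exhibits the explicit conjugation $\operatorname{Ad}(\exp(-tz/2))\,\sigma_1\,\operatorname{Ad}(\exp(tz/2))=\sigma_2$---precisely the calculation you outline. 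Your worry about $\operatorname{Aut}(\mathfrak{k})$- versus $K$-conjugacy is unnecessary here since $\mathcal{T}$ is itself defined via $K$-conjugacy, and your treatment of the tensor-product case is a harmless extra (the paper's lemma is stated only for simple $\mathfrak{g}$).
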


Let $y_1, y_2\in\phi^{-1}(x)$.
We can choose two involutions 
$\sigma_1$ and $\sigma_2$ of $\frak{g}$
which represent $y_1$ and $y_2$,  
respectively, such that
$\frak{k}^{\sigma_1}=\frak{k}^{\sigma_2}$.
Therefore, the proof of Proposition \ref{Kconj} reduces to the following lemma.

\begin{lem}
\label{inv}
Let $\sigma_1$ and $\sigma_2$ be involutions of a 
simple Lie algebra $\frak{g}$ 
that commute with a Cartan involution $\theta$.
If $\frak{k}^{\sigma_1}=\frak{k}^{\sigma_2}$, 
then $\sigma_1$ is $K$-conjugate to 
$\sigma_2$ or $\sigma_1=\theta\sigma_2$.
\end{lem}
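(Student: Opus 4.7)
The plan is to study the automorphism $\phi:=\sigma_1\sigma_2$ of $\frak{g}$, which commutes with $\theta$ because both $\sigma_i$ do. First I observe that $\sigma_1|_{\frak{k}}=\sigma_2|_{\frak{k}}$: the Killing form is negative definite on $\frak{k}$, and $\frak{k}^{\pm\sigma_i}$ are each other's orthogonal complements there, so $\frak{k}^{\sigma_1}=\frak{k}^{\sigma_2}$ forces $\frak{k}^{-\sigma_1}=\frak{k}^{-\sigma_2}$ as well. In particular $\phi|_{\frak{k}}=\mathrm{id}$, and since $\phi$ commutes with $\theta$ it preserves $\frak{p}$; the identity $\phi([X,Y])=[X,\phi(Y)]$ for $X\in\frak{k},\,Y\in\frak{p}$ then shows $\phi|_{\frak{p}_\bb{C}}$ is a $\frak{k}_\bb{C}$-module endomorphism. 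The task reduces to proving that $\phi\in\{\mathrm{id},\theta\}$, with the one exception already handled by an explicit $K$-conjugation.

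If $\frak{g}$ is of non-Hermitian type, $\frak{p}_\bb{C}$ is irreducible as a $\frak{k}_\bb{C}$-module, so by Schur's lemma together with reality, $\phi|_{\frak{p}_\bb{C}}=c\cdot\mathrm{id}$ for some $c\in\bb{R}$. Applying $\phi$ to any non-zero bracket in $[\frak{p},\frak{p}]\subset\frak{k}$ (non-zero since $\frak{g}$ is simple non-compact) gives $c^2=1$. Hence $\phi=\mathrm{id}$ (so $\sigma_1=\sigma_2$) or $\phi=\theta$ (so $\sigma_1=\theta\sigma_2$).

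If $\frak{g}$ is of Hermitian type, let $\frak{z}_K=\bb{R}z$ with $\mathrm{ad}(z)$ acting as $+i$ on $\frak{p}_+$. Since $\phi|_{\frak{k}}=\mathrm{id}$, it commutes with $\mathrm{ad}(z)$ and therefore preserves $\frak{p}_\pm$; Schur yields $\phi|_{\frak{p}_+}=c\cdot\mathrm{id}$ and $\phi|_{\frak{p}_-}=\overline{c}\cdot\mathrm{id}$, and applying $\phi=\mathrm{id}$ to $[\frak{p}_+,\frak{p}_-]\subset\frak{k}_\bb{C}$ forces $|c|=1$. Now split by $\sigma_1(z)=\pm z$. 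In the \emph{holomorphic} subcase $\sigma_1(z)=z$, each $\sigma_i$ preserves $\frak{p}_\pm$; rewriting $\sigma_2^2=\mathrm{id}$ as $\sigma_1\phi\sigma_1=\phi^{-1}$ and evaluating on $\frak{p}_+$ gives $c=c^{-1}$, hence $c=\pm 1$ and we conclude as before. In the \emph{anti-holomorphic} subcase $\sigma_1(z)=-z$, each $\sigma_i$ interchanges $\frak{p}_+$ and $\frak{p}_-$, and the relation $\sigma_1\phi\sigma_1=\phi^{-1}$ degenerates to the tautology $\overline{c}=\overline{c}$. To finish, I conjugate $\sigma_1$ by $k_t:=\exp(tz)\in K$: from $\mathrm{Ad}(k_t)|_{\frak{k}}=\mathrm{id}$ and $\mathrm{Ad}(k_t)|_{\frak{p}_+}=e^{it}\cdot\mathrm{id}$ a direct computation yields
\[
(k_t\sigma_1 k_t^{-1})|_{\frak{p}_+}=e^{-2it}\,\sigma_1|_{\frak{p}_+}.
\]
Choosing $t\in\bb{R}$ with $e^{-2it}=c$ makes $k_t\sigma_1 k_t^{-1}$ agree with $\sigma_2$ on $\frak{p}_+$; agreement on $\frak{k}$ is automatic, and agreement on $\frak{p}_-$ follows from reality, so $\sigma_2=k_t\sigma_1 k_t^{-1}$ is $K$-conjugate to $\sigma_1$.

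The main obstacle is exactly the anti-holomorphic subcase: the algebraic relation $\sigma_2^2=\mathrm{id}$ no longer pins the scalar $c$ down to $\pm 1$, and one must produce an explicit $K$-conjugator from the one-parameter subgroup $\exp(\bb{R}z)$ in the center of $K$. All other situations are handled uniformly by Schur plus reality plus the bracket constraint.
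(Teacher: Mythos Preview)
Your proof is correct and follows essentially the same approach as the paper: you set $\phi=\sigma_1\sigma_2$, use Schur on $\frak{p}_\bb{C}$ (or on $\frak{p}_\pm$ in the Hermitian case), and in the anti-holomorphic subcase conjugate by an element of $\exp(\bb{R}z)\subset Z(K)$. The only cosmetic differences are that the paper identifies $\phi$ globally as $\Ad(\exp(tz))$ and then conjugates by $\exp(-tz/2)$, whereas you work directly with the scalar $c$ and solve $e^{-2it}=c$; these are the same computation.
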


\begin{proof}
[Proof of the Lemma \ref{inv}.]
Since $\sigma_1=\sigma_2$ on $\frak{k}$, 
the composition $\tau=\sigma_1\sigma_2$ is an automorphism of $\frak{g}$ 
that is the identity map on $\frak{k}$.
Then the restriction 
$\tau|_\frak{p}:\frak{p}\to \frak{p}$
is an isomorphism of ${\rm ad}(\frak{k})$-modules.

Suppose that $\frak{g}$ is not of Hermitian type. 
Then $\frak{p}_\bb{C}$ is a simple $\frak{k}$-module. 
Hence $\tau$ acts on $\frak{p}$ as a scalar.
Because $\tau=1$ on $\frak{k}$ and 
$[\frak{p},\frak{p}]=\frak{k}$, we have 
$\tau=1$ or $-1$ on $\frak{p}$.
Therefore $\sigma_1=\sigma_2$ or
$\sigma_1=\theta\sigma_2$.

Suppose that $\frak{g}$ is of Hermitian type. 
Then $\frak{p}_\bb{C}$ decomposes as a 
$\frak{k}$-module: $\frak{p}_\bb{C}=\frak{p}_++\frak{p}_-$.
We extend $\tau$ to a complex linear automorphism of $\frak{g}_\bb{C}$ 
and use the same letter.
Since $\frak{p}_+$ and $\frak{p}_-$ are non-isomorphic 
simple $\frak{k}$-modules, 
there are constants $c_+,c_-\in\bb{C}$ such that 
$\tau=c_+$ on $\frak{p}_+$ 
and $\tau=c_-$ on $\frak{p}_-$.
In light that $[\frak{p}_+,\frak{p}_-]=\frak{k}$ and 
$\tau=1$ on $\frak{k}$, we have $c_+c_-=1$.
We write $\overline{\frak{p}_+}$ for 
the complex conjugate of $\frak{p}_+$ with respect to 
the real form $\frak{g}$.
Since $\overline{\frak{p}_+}=\frak{p}_-$
and $\tau$ commutes with the complex conjugates, 
we have $\overline{c_+}=c_-$.
Let $z\in\frak{z}_K$ be a non-zero element of 
the center of $\frak{k}$.
Then we can write $\tau={\rm Ad}(\exp(tz))$ for $t\in\bb{R}$.
Since $\sigma_1\tau=\sigma_2$ is an involution, 
it follows that 
$\tau^{-1}=\sigma_1\tau\sigma_1={\rm Ad}(\exp(t\sigma_1 z))$. 
If the symmetric pair $(\frak{g},\frak{g}^{\sigma_1})$ is 
of holomorphic type, then $\sigma_1 z=z$ and 
hence $\tau=\tau^{-1}$. 
Therefore, $c_+=1$ or $-1$ and it follows that 
$\sigma_1=\sigma_2$ or $\sigma_1=\sigma_2\theta$.
If the symmetric pair $(\frak{g},\frak{g}^{\sigma_1})$ is 
not of holomorphic type, then $\sigma_1 z=-z$. 
In this case, 
${\rm Ad}(\exp(-tz/2))\sigma_1{\rm Ad}(\exp(tz/2))=\sigma_2$, so
$\sigma_1$ is $K$-conjugate to $\sigma_2$.
\end{proof}

When $\frak{g}^{\sigma}$ is isomorphic to 
$\frak{g}^{\theta\sigma}$ as a Lie algebra, 
we use a case-by-case analysis to tell 
whether 
$\sigma$ and $\theta\sigma$ are $K$-conjugate 
and we conclude that: 

\begin{prop}
For a symmetric pair $(\frak{g},\frak{g}')$ with $\frak{g}$ simple, 
Proposition \ref{Kconj} (1) occurs if and only if 
$(\frak{g},\frak{g}')$ is isomorphic to 
$(\frak{so}(4m+2,4n+2),\frak{u}(2m+1,2n+1))$.
\end{prop}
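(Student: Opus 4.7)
The plan is to exploit Lemma \ref{inv} together with a case-by-case check against Berger's classification of irreducible symmetric pairs with $\frak{g}$ simple. First I would observe that, since $\theta$ restricts to the identity on $\frak{k}$, one has $\sigma|_\frak{k}=(\theta\sigma)|_\frak{k}$ for every involution $\sigma$ commuting with $\theta$. Hence the fiber of $\phi:\mathcal{S}\to\mathcal{T}$ through $[\sigma]$ always contains both $[\sigma]$ and $[\theta\sigma]$, and by Lemma \ref{inv} no other classes. Thus case (1) of Proposition \ref{Kconj} is equivalent to the assertion that $\sigma$ and $\theta\sigma$ are not $K$-conjugate as involutions of $\frak{g}$.

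The next step is enumeration. For each irreducible pair $(\frak{g},\frak{g}^\sigma)$ in Berger's tables, the associated subalgebra $\frak{g}^{\theta\sigma}=\frak{k}^\sigma+\frak{p}^{-\sigma}$ is the real form of $(\frak{g}^\sigma)_\bb{C}$ with maximal compact part $\frak{k}^\sigma$ and non-compact part $\frak{p}^{-\sigma}$. Pairs with $\frak{g}^\sigma\not\cong\frak{g}^{\theta\sigma}$ fall automatically in case (2) and can be discarded. A direct inspection of the list shows that, apart from a small number of sporadic low-rank or exceptional pairs (to be handled individually), the only nontrivial candidate family is $(\frak{so}(2p,2q),\,\frak{u}(p,q))$, where $\frak{g}^\sigma$ and $\frak{g}^{\theta\sigma}$ are both isomorphic to $\frak{u}(p,q)$ since $\frak{u}(p,q)$ is the unique real form of $\frak{gl}(p+q,\bb{C})$ with maximal compact $\frak{u}(p)\oplus\frak{u}(q)$.

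For this main family I would realize $\sigma$ as $\Ad(J)$ where $J=J_p\oplus J_q\in SO(2p)\times SO(2q)=K$ is a block-diagonal complex structure on $\bb{R}^{2p}\oplus\bb{R}^{2q}$, and take $\theta=\Ad(I_{p,q})$ with $I_{p,q}=\mathrm{diag}(I_{2p},-I_{2q})$, so that $\theta\sigma=\Ad(J_p\oplus(-J_q))$. The condition $\sigma\sim_K\theta\sigma$ then reduces to the existence of $(k_1,k_2)\in K$ and a sign $\epsilon\in\{\pm 1\}$ with $k_1J_pk_1^{-1}=\epsilon J_p$ and $k_2J_qk_2^{-1}=-\epsilon J_q$, modulo the central $\pm I$. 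The key ingredient is the parity lemma: in $SO(2r)$, the complex structure $J_r$ is conjugate to $-J_r$ if and only if $r$ is even; indeed, the set of $k\in O(2r)$ anti-commuting with $J_r$ forms a coset of $U(r)\subset O(2r)$ and every representative has determinant $(-1)^r$. Combining the two factors gives $\sigma\sim_K\theta\sigma$ if and only if $p$ or $q$ is even, so case (1) occurs precisely when $p=2m+1$ and $q=2n+1$.

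For each sporadic candidate left from the enumeration step, I would exhibit an explicit element of $K$ witnessing $\sigma\sim_K\theta\sigma$, placing the pair in case (3). The main obstacle is making the enumeration in step two exhaustive: one must go carefully through Berger's tables, using the Satake diagrams to track the real forms of $(\frak{g}^\sigma)_\bb{C}$ with prescribed maximal compact subalgebra, and paying attention to low-dimensional coincidences (such as $\frak{so}^*(6)\cong\frak{su}(1,3)$) and outer-equivalent realizations so that no self-associated candidate is overlooked.
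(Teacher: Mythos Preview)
Your approach is essentially the same as the paper's: the paper states only that ``we use a case-by-case analysis to tell whether $\sigma$ and $\theta\sigma$ are $K$-conjugate'' and records the conclusion, with no further details. Your proposal fleshes this out considerably. The parity argument for the family $(\frak{so}(2p,2q),\frak{u}(p,q))$ is correct and is exactly the kind of verification the paper alludes to but omits; it also matches the paper's Remark~\ref{exh}(3), where the fiber counts for $\phi$ are tabulated according to the parities of $m$ and $n$.

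One shortcut you may exploit to lighten the enumeration: the proof of Lemma~\ref{inv} already shows that when $\frak{g}$ is of Hermitian type and $(\frak{g},\frak{g}^\sigma)$ is \emph{not} of holomorphic type, any two involutions with the same restriction to $\frak{k}$ are $K$-conjugate. In particular $\sigma\sim_K\theta\sigma$ automatically in that situation, so all Hermitian non-holomorphic self-associated candidates (e.g.\ $(\frak{su}(n,n),\frak{sl}(n,\bb{C})\oplus\bb{R})$, $(\frak{so}^*(4n),\frak{su}^*(2n)\oplus\bb{R})$, $(\frak{sp}(2n,\bb{R}),\frak{sp}(n,\bb{C}))$) land in case~(3) without further work. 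This leaves only the non-Hermitian and holomorphic-type self-associated pairs to examine individually, which is a manageable list.
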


%%%%%%%%%%%%%%%%%%%%%%%%%%%%%%%%%%%%%%%%%%%%%%%%%%%%%%%

\appendix
\section{Setup for $\theta$-stable Parabolic Subalgebras}
\label{sec:setup}

In this appendix we fix a positive system
 $\Delta^+(\frak{k}_\bb{C},\frak{t}_\bb{C})$ with respect to
 a Cartan subalgebra $\frak{t}_\bb{C}$ of $\frak{k}_\bb{C}$
 and present the set of weights $\Delta(\frak{p}_\bb{C},\frak{t}_\bb{C})$
 for each simple Lie algebra $\frak{g}$. 
We also write down the conditions for $a\in\sqrt{-1}\frak{t}$ to be
 $\Delta^+(\frak{k}_\bb{C},\frak{t}_\bb{C})$-dominant
 in terms of the coordinates $a_i$, which are used in
 Tables \ref{holparablist}, \ref{dslist}, and \ref{isolist}.

In what follows, we do not include $\frak{g}$ that has no non-trivial
 triple $(\frak{g},\frak{g}^{\sigma},\frak{q})$ satisfying the
 discrete decomposability condition
 (see Theorem \ref{notdd} (2)).
We will define $\epsilon_i\in\frak{t}_\bb{C}^*$
 and $e_i\in\frak{t}_\bb{C}$.
If $\frak{g}$ is not equal to $\frak{su}(m,n)$,
 $\frak{sl}(2n,\bb{C})$, $\frak{e}_{6(2)}$, or $\frak{e}_{7(-25)}$,
 then $\{\epsilon_i\}$ is a basis of $\frak{t}_\bb{C}^*$
 and $\{e_i\}$ is a dual basis of $\{\epsilon_i\}$.

\begin{set}
\label{sumn}
{\rm
Let $\frak{g}=\frak{su}(m,n)$.
Choose $\epsilon_1,\dots,\epsilon_{m+n} \in \frak{t}_\bb{C}^*$
such that 
\begin{align*}
&\Delta^+(\frak{k}_\bb{C},\frak{t}_\bb{C})
 =\{\epsilon_i-\epsilon_j\}_{1\leq i < j  \leq m}
 \cup\{\epsilon_{m+i}-\epsilon_{m+j}\}_{1\leq i < j \leq n},\\
&\Delta(\frak{p}_\bb{C},\frak{t}_\bb{C})
 =\{\pm(\epsilon_i-\epsilon_{m+j})\}
 _{1\leq i\leq m,\;1\leq j \leq n}.
\end{align*}
Define $e_1,\dots, e_{m+n}\in\frak{t}_\bb{C}$ such that 
$(\epsilon_i-\epsilon_j)(e_k)=\delta_{ik}-\delta_{jk}$
and then $e_1+\dots+e_{m+n}=0$.
The dominant condition on $a=a_1e_1+\cdots+a_{m+n}e_{m+n}
\in \sqrt{-1}\frak{t}$ amounts to that 
$a_1\geq a_2\geq \cdots\geq a_{m}$ and 
$a_{m+1}\geq a_{m+2}\geq \cdots\geq a_{m+n}$.
}
\end{set}

\begin{set}
\label{somn}
{\rm
Let $\frak{g}=\frak{so}(2m,2n)$.
Choose $\epsilon_1,\dots,\epsilon_{m+n} \in \frak{t}_\bb{C}^*$
such that 
\begin{align*}
&\Delta^+(\frak{k}_\bb{C},\frak{t}_\bb{C})
=\{\epsilon_i\pm\epsilon_j\}_{1\leq i < j \leq m}
\cup\{\epsilon_{m+i}\pm\epsilon_{m+j}\}_{1\leq i<j\leq n},\\
&\Delta(\frak{p}_\bb{C},\frak{t}_\bb{C})
=\{\pm\epsilon_i\pm\epsilon_{m+j}\}
_{1\leq i \leq m, \;1\leq j \leq n}.
\end{align*}
Denote by $e_1,\dots, e_{m+n}\in\frak{t}_\bb{C}$ the 
dual basis of $\epsilon_1,\dots,\epsilon_{m+n}$.
The dominant condition on $a=a_1e_1+\cdots+a_{m+n}e_{m+n}
\in \sqrt{-1}\frak{t}$ amounts to that 
$a_1\geq \cdots\geq a_{m-1}\geq |a_m|$ and 
$a_{m+1}\geq \cdots\geq a_{m+n-1}\geq |a_{m+n}|$.}
\end{set}

\begin{set}
{\rm
Let $\frak{g}=\frak{so}(2m,2n+1)$.
Choose $\epsilon_1,\dots,\epsilon_{m+n} \in \frak{t}_\bb{C}^*$
such that 
\begin{align*}
&\Delta^+(\frak{k}_\bb{C},\frak{t}_\bb{C})
=\{\epsilon_i\pm\epsilon_j\}_{1\leq i < j \leq m}
\cup\{\epsilon_{m+i}\pm\epsilon_{m+j}\}_{1\leq i<j\leq n}
\cup\{\epsilon_{m+i}\}_{1\leq i\leq n},\\
&\Delta(\frak{p}_\bb{C},\frak{t}_\bb{C})
=\{\pm\epsilon_i\pm\epsilon_{m+j}\}
_{1\leq i \leq m, \;1\leq j \leq n}
\cup\{\pm\epsilon_i\}_{1\leq i\leq m}.
\end{align*}
Denote by $e_1,\dots, e_{m+n}\in\frak{t}_\bb{C}$ the 
dual basis of $\epsilon_1,\dots,\epsilon_{m+n}$.
The dominant condition on $a=a_1e_1+\cdots+a_{m+n}e_{m+n}
\in \sqrt{-1}\frak{t}$ amounts to that 
$a_1\geq \cdots\geq a_{m-1}\geq |a_m|$ and 
$a_{m+1}\geq \cdots\geq a_{m+n}\geq 0$.
}
\end{set}

\begin{set}
{\rm 
Let $\frak{g}=\frak{so}(2m+1,2n)$.
Choose $\epsilon_1,\dots,\epsilon_{m+n} \in \frak{t}_\bb{C}^*$
such that 
\begin{align*}
&\Delta^+(\frak{k}_\bb{C},\frak{t}_\bb{C})
=\{\epsilon_i\pm\epsilon_j\}_{1\leq i < j \leq m}
\cup\{\epsilon_{m+i}\pm\epsilon_{m+j}\}_{1\leq i<j\leq n}
\cup\{\epsilon_i\}_{1\leq i\leq m},\\
&\Delta(\frak{p}_\bb{C},\frak{t}_\bb{C})
=\{\pm\epsilon_i\pm\epsilon_{m+j}\}
_{1\leq i \leq m, \;1\leq j \leq n}
\cup\{\pm\epsilon_{m+i}\}_{1\leq i\leq n}.
\end{align*}
Denote by $e_1,\dots, e_{m+n}\in\frak{t}_\bb{C}$ the 
dual basis of $\epsilon_1,\dots,\epsilon_{m+n}$.
The dominant condition on 
$a=a_1e_1+\cdots+a_{m+n}e_{m+n}\in \sqrt{-1}\frak{t}$ 
amounts to that $a_1\geq \cdots\geq a_m \geq 0$ and 
$a_{m+1}\geq \cdots\geq a_{m+n-1}\geq |a_{m+n}|$.
}
\end{set}

\begin{set}
{\rm 
Let $\frak{g}=\frak{so}(2m+1,2n+1)$.
Choose $\epsilon_1,\dots,\epsilon_{m+n} \in \frak{t}_\bb{C}^*$
such that 
\begin{align*}
&\Delta^+(\frak{k}_\bb{C},\frak{t}_\bb{C})
=\{\epsilon_i\pm\epsilon_j\}_{1\leq i < j \leq m}
\cup\{\epsilon_{m+i}\pm\epsilon_{m+j}\}_{1\leq i<j\leq n}
\cup\{\epsilon_{i}\}_{1\leq i\leq m}
\cup\{\epsilon_{m+i}\}_{1\leq i\leq n},\\
&\Delta(\frak{p}_\bb{C},\frak{t}_\bb{C})
=\{\pm\epsilon_i\pm\epsilon_{m+j}\}
_{1\leq i \leq m, \;1\leq j \leq n}
\cup\{\pm\epsilon_{i}\}_{1\leq i\leq m}
\cup\{\pm\epsilon_{m+i}\}_{1\leq i\leq n}
\cup\{0\}.
\end{align*}
Denote by $e_1,\dots, e_{m+n}\in\frak{t}_\bb{C}$ the 
dual basis of $\epsilon_1,\dots,\epsilon_{m+n}$.
The dominant condition on $a=a_1e_1+\cdots+a_{m+n}e_{m+n}
\in \sqrt{-1}\frak{t}$ amounts to that 
$a_1\geq \cdots\geq a_m \geq 0$ and 
$a_{m+1}\geq \cdots\geq a_{m+n} \geq 0$.
}
\end{set}

\begin{set}
\label{spmn}
{\rm
Let $\frak{g}=\frak{sp}(m,n)$.
Choose $\epsilon_1,\dots,\epsilon_{m+n} \in \frak{t}_\bb{C}^*$
such that 
\begin{align*}
&\Delta^+(\frak{k}_\bb{C},\frak{t}_\bb{C})\!
=\!\{\epsilon_i\pm\epsilon_j\}_{1\leq i < j \leq m}
\!\cup\!\{\epsilon_{m+i}\pm\epsilon_{m+j}\}_{1\leq i<j\leq n}
\!\cup\!\{2\epsilon_{i}\}_{1\leq i\leq m}
\!\cup\!\{2\epsilon_{m+i}\}_{1\leq i\leq n},\\
&\Delta(\frak{p}_\bb{C},\frak{t}_\bb{C})
=\{\pm\epsilon_i\pm\epsilon_{m+j}\}
_{1\leq i \leq m, \;1\leq j \leq n}.
\end{align*}
Denote by $e_1,\dots, e_{m+n}\in\frak{t}_\bb{C}$ the 
dual basis of $\epsilon_1,\dots,\epsilon_{m+n}$.
The dominant condition on $a=a_1e_1+\cdots+a_{m+n}e_{m+n}
\in \sqrt{-1}\frak{t}$ amounts to that 
$a_1\geq \cdots\geq a_{m}\geq 0$
and $a_{m+1}\geq \cdots\geq a_{m+n}\geq 0$.
}
\end{set}

\begin{set}
\label{sostar}
{\rm
Let $\frak{g}=\frak{so}^*(2n)$.
Choose $\epsilon_1,\dots,\epsilon_{n} \in \frak{t}_\bb{C}^*$
such that 
\begin{align*}
&\Delta^+(\frak{k}_\bb{C},\frak{t}_\bb{C})
=\{\epsilon_i-\epsilon_j\}_{1\leq i < j \leq n},\\
&\Delta(\frak{p}_\bb{C},\frak{t}_\bb{C})
=\{\pm(\epsilon_i+\epsilon_j)\}
_{1\leq i < j \leq n}.
\end{align*}
Denote by $e_1,\dots, e_{n}\in\frak{t}_\bb{C}$ the 
dual basis of $\epsilon_1,\dots,\epsilon_{n}$.
The dominant condition on $a=a_1e_1+\cdots+a_{n}e_{n}\in \sqrt{-1}\frak{t}$ 
amounts to that $a_1\geq \cdots\geq a_n$.
}
\end{set}

\begin{set}
\label{spr}
{\rm
Let $\frak{g}=\frak{sp}(n,\bb{R})$.
Choose $\epsilon_1,\dots,\epsilon_{n} \in \frak{t}_\bb{C}^*$
such that 
\begin{align*}
&\Delta^+(\frak{k}_\bb{C},\frak{t}_\bb{C})
=\{\epsilon_i-\epsilon_j\}_{1\leq i < j \leq n},
\\
&\Delta(\frak{p}_\bb{C},\frak{t}_\bb{C})
=\{\pm 2\epsilon_i\}_{1\leq i\leq n}\cup
\{\pm(\epsilon_i+\epsilon_j)\}
_{1\leq i < j \leq n}.
\end{align*}
Denote by $e_1,\dots, e_{n}\in\frak{t}_\bb{C}$ the 
dual basis of $\epsilon_1,\dots,\epsilon_{n}$.
The dominant condition on $a=a_1e_1+\cdots+a_{n}e_{n}
\in \sqrt{-1}\frak{t}$ amounts to that $a_1\geq \cdots\geq a_n$.
}
\end{set}

\begin{set}
\label{slc}
{\rm
Let $\frak{g}=\frak{sl}(2n,\bb{C})$.
Choose $\epsilon_1,\dots,\epsilon_{2n} \in \frak{t}_\bb{C}^*$ such that 
\begin{align*}
&\Delta^+(\frak{k}_\bb{C},\frak{t}_\bb{C})=
\{\epsilon_i-\epsilon_j\}_{1\leq i<j\leq 2n},
\\
&\Delta(\frak{p}_\bb{C},\frak{t}_\bb{C})
=\{\pm(\epsilon_i-\epsilon_j)\}
_{1\leq i < j \leq 2n}
\cup\{0\}.
\end{align*}
Define $e_1,\dots,e_{2n}\in\frak{t}_\bb{C}$ such that
$(\epsilon_i-\epsilon_j)(e_k)=\delta_{ik}-\delta_{jk}$
and then $e_1+\cdots+e_{2n}=0$.
The dominant condition on $a=a_1e_1+\cdots+a_{2n}e_{2n}
\in \sqrt{-1}\frak{t}$ amounts to that 
$a_1\geq \cdots\geq a_{2n}$.
}
\end{set}

\begin{set}
\label{soc}
{\rm
Let $\frak{g}=\frak{so}(2n,\bb{C})$.
Choose $\epsilon_1,\dots,\epsilon_n \in \frak{t}_\bb{C}^*$ such that 
\begin{align*}
&\Delta^+(\frak{k}_\bb{C},\frak{t}_\bb{C})
=\{\epsilon_i\pm\epsilon_j\}_{1\leq i < j \leq n},\\
&\Delta(\frak{p}_\bb{C},\frak{t}_\bb{C})
=\{\pm\epsilon_i\pm\epsilon_j\}
_{1\leq i<j \leq n}
\cup\{0\}.
\end{align*}
Denote by $e_1,\dots, e_{n}\in\frak{t}_\bb{C}$ the 
dual basis of $\epsilon_1,\dots,\epsilon_{n}$.
The dominant condition on $a=a_1e_1+\cdots+a_{n}e_{n}
\in \sqrt{-1}\frak{t}$ amounts to that 
$a_1\geq \cdots\geq a_{n-1}\geq |a_n|$.
}
\end{set}

For real exceptional Lie algebras, we follow the notation of 
\cite[Chapter X]{hel}.

\begin{set}
\label{f41}
{\rm
Let $\frak{g}=\frak{f}_{4(4)}(\equiv\frak{f}_4^1)$ 
so that $\frak{k}_\bb{C}=\frak{sp}(3,\bb{C})\oplus\frak{sl}(2,\bb{C})$.
Choose $\epsilon_1,\epsilon_2,\epsilon_{3},\epsilon_4 \in \frak{t}_\bb{C}^*$
such that 
\begin{align*}
&\Delta^+(\frak{k}_\bb{C},\frak{t}_\bb{C})
=\{\epsilon_i\pm\epsilon_j\}_{1\leq i < j \leq 3}
\cup\{2\epsilon_{i}\}_{1\leq i\leq 3}
\cup\{2\epsilon_4\},\\
&\Delta(\frak{p}_\bb{C},\frak{t}_\bb{C})
=\{\pm\epsilon_1\pm\epsilon_2\pm\epsilon_3\pm\epsilon_4\}
\cup\{\pm\epsilon_i\pm\epsilon_4\}_{1\leq i\leq 3}.
\end{align*}
Denote by $e_1,e_2,e_3,e_4\in\frak{t}_\bb{C}$ the 
dual basis of $\epsilon_1,\epsilon_2,\epsilon_3,\epsilon_4$.
The dominant condition on $a=a_1e_1+a_2e_2+a_3e_3+a_4e_4
\in \sqrt{-1}\frak{t}$ amounts to that 
$a_1\geq a_2\geq a_3\geq 0$ and $a_4\geq 0$.
}
\end{set}

\begin{set}
\label{f42}
{\rm
Let $\frak{g}=\frak{f}_{4(-20)}(\equiv\frak{f}_4^2)$ 
so that $\frak{k}_\bb{C}=\frak{so}(9,\bb{C})$.
Choose $\epsilon_1,\epsilon_2,\epsilon_{3},\epsilon_4\in\frak{t}_\bb{C}^*$
such that 
\begin{align*}
&\Delta^+(\frak{k}_\bb{C},\frak{t}_\bb{C})
=\{\epsilon_i\pm\epsilon_j\}_{1\leq i < j \leq 4}
\cup\{\epsilon_{i}\}_{1\leq i\leq 4},\\
&\Delta(\frak{p}_\bb{C},\frak{t}_\bb{C})
=\Bigl\{\frac{1}{2}
(\pm\epsilon_1\pm\epsilon_2\pm\epsilon_3\pm\epsilon_4)
\Bigr\}.
\end{align*}
Denote by $e_1,e_2,e_3,e_4\in\frak{t}_\bb{C}$ the 
dual basis of $\epsilon_1,\epsilon_2,\epsilon_3,\epsilon_4$.
The dominant condition on $a=a_1e_1+\cdots+a_{4}e_{4}
\in \sqrt{-1}\frak{t}$ amounts to that 
$a_1\geq a_2\geq a_3\geq a_4\geq 0$.
}
\end{set}

\begin{set}
\label{e62}
{\rm
Let $\frak{g}=\frak{e}_{6(2)}(\equiv\frak{e}_{6}^2)$ 
so that $\frak{k}_\bb{C}=\frak{sl}(6,\bb{C})\oplus\frak{sl}(2,\bb{C})$.
Choose $\epsilon_1,\dots,\epsilon_7 \in \frak{t}_\bb{C}^*$ 
such that 
\begin{align*}
&\Delta^+(\frak{k}_\bb{C},\frak{t}_\bb{C})
=\{\epsilon_i-\epsilon_j\}_{1\leq i < j \leq 6}
\cup\{2\epsilon_7\},\\
&\Delta(\frak{p}_\bb{C},\frak{t}_\bb{C})
=\Bigl\{\frac{1}{2}
\Bigl(\sum_{i=1}^6 (-1)^{k(i)} \epsilon_i\Bigr)
 \pm \epsilon_7:\;
k(i)\in\{0,1\},\, k(1)+\dots+k(6)=3
\Bigr\}.
\end{align*}
Define $e_1,\dots,e_7\in\frak{t}_\bb{C}$ such that 
$(\epsilon_i-\epsilon_j)(e_k)=\delta_{ik}-\delta_{jk}$, 
$\epsilon_7(e_7)=1$, and 
$(\epsilon_i-\epsilon_j)(e_7)=\epsilon_7(e_k)=0$ for $1\leq i,j,k\leq 6$.
Then $e_1+\cdots +e_6=0$.
The dominant condition on $a=a_1e_1+\cdots+a_{7}e_{7}
\in \sqrt{-1}\frak{t}$ amounts to that 
$a_1\geq \cdots \geq a_6$ and $a_7\geq 0$.
}
\end{set}

\begin{set}
\label{e63}
{\rm
Let $\frak{g}=\frak{e}_{6(-14)}(\equiv\frak{e}_6^3)$ 
so that $\frak{k}_\bb{C}=\frak{so}(10,\bb{C})\oplus\bb{C}$.
Choose $\epsilon_1,\dots,\epsilon_6 \in \frak{t}_\bb{C}^*$ 
such that 
\begin{align*}
&\Delta^+(\frak{k}_\bb{C},\frak{t}_\bb{C})
=\{\epsilon_i\pm\epsilon_j\}_{1\leq i < j \leq 5},\\
&\Delta(\frak{p}_\bb{C},\frak{t}_\bb{C})
=\Bigl\{\frac{1}{2}
\Bigl(\sum_{i=1}^6 {(-1)^{k(i)}}\epsilon_i\Bigr)
:\;
k(1)+\cdots+k(6)\;{\rm odd}
\Bigr\}
.
\end{align*}
Denote by $e_1,\dots,e_6\in\frak{t}_\bb{C}$ the 
dual basis of $\epsilon_1,\dots,\epsilon_6$.
The dominant condition on $a=a_1e_1+\cdots+a_{6}e_{6}
\in \sqrt{-1}\frak{t}$ amounts to that 
$a_1\geq \cdots \geq a_4\geq |a_5|$.
}
\end{set}

\begin{set}
\label{e72}
{\rm
Let $\frak{g}=\frak{e}_{7(-5)}(\equiv\frak{e}_7^2)$ 
so that $\frak{k}_\bb{C}=\frak{so}(12,\bb{C})\oplus\frak{sl}(2,\bb{C})$.
Choose $\epsilon_1,\dots,\epsilon_7 \in \frak{t}_\bb{C}^*$ 
such that 
\begin{align*}
&\Delta^+(\frak{k}_\bb{C},\frak{t}_\bb{C})
=\{\epsilon_i\pm\epsilon_j\}_{1\leq i < j \leq 6}
\cup\{2\epsilon_7\},\\
&\Delta(\frak{p}_\bb{C},\frak{t}_\bb{C})
=\Bigl\{\frac{1}{2}
\Bigl(\sum_{i=1}^6 {(-1)^{k(i)}}\epsilon_i\Bigr)\pm \epsilon_7:\;
k(1)+\cdots+k(6)\,{\rm odd}
\Bigr\}
.
\end{align*}
Denote by $e_1,\dots,e_7\in\frak{t}_\bb{C}$ the 
dual basis of $\epsilon_1,\dots,\epsilon_7$.
The dominant condition on $a=a_1e_1+\cdots+a_{7}e_{7}
\in \sqrt{-1}\frak{t}$ amounts to that 
$a_1\geq \cdots \geq a_5\geq |a_6|$ and $a_7\geq 0$.
}
\end{set}

\begin{set}
\label{e73}
{\rm
Let $\frak{g}=\frak{e}_{7(-25)}(\equiv\frak{e}_7^3)$ 
so that $\frak{k}_\bb{C}=\frak{e}_6^\bb{C}\oplus\bb{C}$.
Choose $\epsilon_1,\dots,\epsilon_8 \in \frak{t}_\bb{C}^*$ 
such that 
\begin{align*}
&\Delta^+(\frak{k}_\bb{C},\frak{t}_\bb{C})
=\{\epsilon_i\pm\epsilon_j\}_{1\leq j < i \leq 5}\\
&\qquad\cup\Bigl\{
\frac{1}{2}\Bigl(\epsilon_8-\epsilon_7-\epsilon_6
+\sum_{i=1}^5 {(-1)^{k(i)}}\epsilon_i\Bigr):\;
k(1)+\cdots+k(5)\,{\rm even}
\Bigr\},
\\
&\Delta(\frak{p}_\bb{C},\frak{t}_\bb{C})
=\{\pm\epsilon_6\pm \epsilon_i\}_{1\leq i\leq 5}
\cup\{\pm(\epsilon_8-\epsilon_7)\}\\
&\qquad\cup\Bigl\{
\pm\frac{1}{2}\Bigl(\epsilon_8-\epsilon_7+\epsilon_6
+\sum_{i=1}^5 {(-1)^{k(i)}}\epsilon_i\Bigr):\;
k(1)+\cdots+k(5)\,{\rm odd}
\Bigr\}.
\end{align*}
Define $e_1,\dots,e_{8}\in\frak{t}_\bb{C}$ such that
$\epsilon_i(e_j)=\delta_{ij}$
for $1\leq i \leq 6,\ 1\leq j \leq 8$
and that 
$(\epsilon_8-\epsilon_7)(e_i)=\delta_{i8}-\delta_{i7}$ for $1\leq i \leq 8$.
Then $e_8+e_7=0$.
The dominant condition on $a=a_1e_1+\cdots+a_{8}e_{8}
\in \sqrt{-1}\frak{t}$ amounts to that 
$a_5\geq \cdots \geq a_2\geq |a_1|$ and 
$a_8-a_7-a_6-a_5-a_4-a_3-a_2+a_1\geq 0$.
}
\end{set}

\begin{set}
\label{e82}
{\rm
Let $\frak{g}=\frak{e}_{8(-24)}(\equiv\frak{e}_8^2)$ 
so that $\frak{k}_\bb{C}=\frak{e}_7^\bb{C}\oplus\frak{sl}(2,\bb{C})$.
Choose $\epsilon_1,\dots,\epsilon_8\in \frak{t}_\bb{C}^*$  
such that 
\begin{align*}
&\Delta^+(\frak{k}_\bb{C},\frak{t}_\bb{C})
=\{\epsilon_i\pm\epsilon_j\}_{1\leq j < i \leq 6}
\cup\{\epsilon_8\pm\epsilon_7\}\\
&\qquad\cup\Bigl\{
\frac{1}{2}\Bigl(\epsilon_8-\epsilon_7+
\sum_{i=1}^6 {(-1)^{k(i)}}\epsilon_i\Bigr):\;
k(1)+\cdots+k(6)\,{\rm odd}
\Bigr\},
\\
&\Delta(\frak{p}_\bb{C},\frak{t}_\bb{C})
=\{\pm\epsilon_7\pm \epsilon_i\}_{1\leq i\leq 6}
\cup
\{\pm\epsilon_8\pm \epsilon_i\}_{1\leq i\leq 6}\\
&\qquad\cup\Bigl\{
\pm\frac{1}{2}\Bigl(\epsilon_8+\epsilon_7+
\sum_{i=1}^6 {(-1)^{k(i)}}\epsilon_i\Bigr):\;
k(1)+\cdots+k(6)\,{\rm even}
\Bigr\}
.
\end{align*}
Denote by $e_1,\dots,e_8\in\frak{t}_\bb{C}$ the 
dual basis of $\epsilon_1,\dots,\epsilon_8$.
The dominant condition on $a=a_1e_1+\cdots+a_{8}e_{8}
\in \sqrt{-1}\frak{t}$ amounts to that 
$a_6\geq \cdots \geq a_2\geq |a_1|$ and 
$a_8-a_7-a_6-a_5-a_4-a_3-a_2+a_1\geq 0$.
}
\end{set}

%%%%%%%%%%%%%%%%%%%%%%%%%%%%%%%%%%%%%%%%%%%%%%%%%%%%%%%%

\section{List of Symmetric Pairs
Satisfying the Assumption of Proposition \ref{highlow}}
\label{sec:diagram}

In this appendix we assume that 
$\frak{g}$ is a non-compact simple Lie algebra and 
classify all the irreducible symmetric pairs 
$(\frak{g},\frak{g}^{\sigma})$ 
satisfying the following two conditions: 
\begin{enumerate}
\item[(1)] 
$-\sigma\alpha_0$ is dominant 
with respect to $\Delta^+(\frak{k}_\bb{C},\frak{t}_\bb{C})$,
\item[(2)]
$\frak{t}^{\sigma}\neq 0$, 
\end{enumerate}
where $\alpha_0$ is the highest weight of $\frak{p}_\bb{C}$ 
with respect to $\Delta^+(\frak{k}_\bb{C},\frak{t}_\bb{C})$ 
(if $\frak{g}$ is not of Hermitian type) or that of 
$\frak{p}_+$ (if $\frak{g}$ is of Hermitian type). 
The condition (1) is the key assumption in Proposition \ref{highlow}.
Recall we have assumed 
that $\Delta^+(\frak{k}_\bb{C},\frak{t}_\bb{C})$ and 
$\sigma$ satisfy the compatibility condition of Setting \ref{setq} (1).
If $\frak{t}^{\sigma}=0$, then we can apply Proposition \ref{split} 
and we see there is no parabolic subalgebra other than $\frak{g}_\bb{C}$ 
satisfying the discrete decomposability condition. 
To save space, we do not list the pairs
 $(\frak{g},\frak{g}^\sigma)$ with $\frak{t}^{\sigma}=0$ because 
we can easily verify the condition $\frak{t}^{\sigma}=0$, which is equivalent 
to that $\frak{k}^{\sigma}+\sqrt{-1}\frak{k}^{-\sigma}$ is 
a split real form of $\frak{k}_\bb{C}$. 

In view of Proposition \ref{dom}, the classification 
of such pairs
$(\frak{g},\frak{g}^\sigma)$ is carried out by using diagrams.
We thus write the Satake diagram of 
$\frak{k}^{\sigma}+\sqrt{-1}\frak{k}^{-\sigma}$ and 
add a vertex $\star$, which is associated to the weight $\alpha_0$ 
as explained in the paragraph before Proposition \ref{dom}.

In \ref{subsec:nothol}, we list all the pairs 
satisfying the assumption (1) or (2) in Proposition \ref{highlow}
and $\frak{t}^{\sigma}\neq 0$.
For these pairs, no parabolic subalgebra other than $\frak{g}_\bb{C}$ 
satisfies the discrete decomposability condition. 
In \ref{subsec:hol}, we list all the pairs 
satisfying the assumption (3) of Proposition \ref{highlow}
and $\frak{t}^{\sigma}\neq 0$.
By Propositions \ref{holdd} and \ref{highlow}, 
the triple $(\frak{g},\frak{g}^{\sigma},\frak{q})$ 
satisfies the discrete decomposability 
condition if and only if $\frak{q}$ is holomorphic or anti-holomorphic
(Definition \ref{def:holoparab}).

In what follows, 
it is convenient to use the following symbol: 

\begin{align*}
\begin{xy}
\ar@{-} (0,0); (15,0) *+[F]{\qquad p \qquad }="A"
\ar@{-} "A"; (30,0) *{}
\end{xy}
:=
\begin{cases}
\begin{xy}
(17.5,-4.5) *{p}
\ar@{-} (-5,2); (0,2) *{\bullet}="A"
\ar@{-} "A"; (10,2) *{\bullet}="B"
\ar@{-} "B"; (15,2) *{}="C"
\ar@{.} "C"; (20,2) *{}="D"
\ar@{-} "D"; (25,2) *{\bullet}="E"
\ar@{-} "E"; (35,2) *{\bullet}="F"
\ar@{-} "F"; (40,2) *{}
$\underbrace{\hspace*{3.5cm}}$
\end{xy}
& (p\geq 0)\\
\begin{xy}
\ar@{-} (0,3);(10,3) *{\circ}="A"
\ar@{-} "A"; (20,3)
\end{xy}
& (p=-1).
\end{cases}
\end{align*}

We note that the diagram for $(\frak{g},\frak{g}^{\sigma})$ is the same as 
that for the associated pair $(\frak{g},\frak{g}^{\theta\sigma})$.

\subsection{Case of non-holomorphic symmetric pairs}
\label{subsec:nothol}

\subsubsection{}
\ 

$m,n-m\geq 1,\quad |n-2m|\geq 2$

$(\frak{g},\frak{g}^{\sigma})
=(\frak{sl}(n,\bb{R}),
\frak{sl}(m,\bb{R}) \oplus \frak{sl}(n-m,\bb{R}) \oplus \bb{R})$

$(\frak{g},\frak{g}^{\theta\sigma})
=(\frak{sl}(n,\bb{R}),\frak{so}(m,n-m))$

Case: $n$ even 

\begin{xy}
\ar@{--} *{\star}; (10,0) *{\circ}="A"
\ar@{-} "A"; (20,0) *{\circ}="B"
\ar@{-} "B"; (30,0) *{}="C"
\ar@{.} "C"; (40,0) *{}="D"
\ar@{-} "D"; (50,0) *{\circ}="E"
\ar@{-} "E"; (60,0) *{\bullet}="F"
\ar@{-} "F"; (70,0) *{\bullet}="G"
\ar@{-} "G"; (80,0) *{}="H"
\ar@{.} "H"; (90,0) *{}="I"
\ar@{-} "I"; (100,0) *{\bullet}="J"
\ar@{-} "J"; (105,-8.6) *{\bullet}="K"
\ar@{-} "J"; (105,8.6) *{\bullet}="L"
\end{xy}

Case: $n$ odd 

\begin{xy}
\ar@{--} *{\star}; (10,0) *{\circ}="A"
\ar@{-} "A"; (20,0) *{\circ}="B"
\ar@{-} "B"; (30,0) *{}="C"
\ar@{.} "C"; (40,0) *{}="D"
\ar@{-} "D"; (50,0) *{\circ}="E"
\ar@{-} "E"; (60,0) *{\bullet}="F"
\ar@{-} "F"; (70,0) *{\bullet}="G"
\ar@{-} "G"; (80,0) *{}="H"
\ar@{.} "H"; (90,0) *{}="I"
\ar@{-} "I"; (100,0) *{\bullet}="J"
\ar@{=>} "J"; (110,0) *{\bullet}="K"
\end{xy}

\subsubsection{}
\ 

$n\geq 2$

$(\frak{g},\frak{g}^{\sigma})
=(\frak{su}(n,n),
\frak{sl}(n,\bb{C})\oplus\bb{R})$

\begin{xy}
\ar@{-} *{\circ}="A"; (10,0) 
\ar@{.} (10,0) ; (20,0)
\ar@{-} (20,0); (30,0) *{\circ}="B"
\ar@{-} "B"   ; (40,0) *{\circ}="C"
\ar@{--}"C"   ; (50,0) *{\star}
\ar@{--}(50,0); (60,0) *{\circ}="D"
\ar@{-} "D"   ; (70,0) *{\circ}="E"
\ar@{-} "E"   ; (80,0) 
\ar@{.} (80,0); (90,0) 
\ar@{-} (90,0); (100,0) *{\circ}="F"
\ar@/_12mm/@{<->} "A"; "F"
\ar@/_5mm/@{<->} "B"; "E"
\ar@/_2mm/@{<->} "C"; "D"
\end{xy}

\subsubsection{}
\ 

$m,n-m\geq 1$

$(\frak{g},\frak{g}^\sigma)
=(\frak{su}^*(2n), 
\frak{su}^*(2m) \oplus \frak{su}^*(2n-2m) \oplus \bb{R})$

$(\frak{g},\frak{g}^{\theta\sigma})
=(\frak{su}^*(2n), \frak{sp}(m,n-m))$

\begin{xy}
\ar@{-} *{\bullet}; (10,0) *{\circ}="A"
\ar@{--} "A"; (10,-10) *{\star}
\ar@{-} "A"; (20,0) *{\bullet}="C"
\ar@{-} "C"; (30,0) 
\ar@{.} (30,0); (40,0) 
\ar@{-} (40,0); (50,0) *{\circ}="B"
\ar@{-} "B"   ; (60,0) *{\bullet}="D"
\ar@{-} "D"   ; (70,0) *{\bullet}="E"
\ar@{-} "E"; (80,0) 
\ar@{.} (80,0); (90,0) 
\ar@{-} (90,0); (100,0) *{\bullet}="F"
\ar@{<=}"F"; (110,0) *{\bullet}
\end{xy}

\subsubsection{}
\ 

$k,l,m-k,n-l\geq 1,\quad {\rm max} \{ |m-2k|,|n-2l|\}\geq 2$

$(\frak{g},\frak{g}^\sigma)
=(\frak{so}(m,n), 
\frak{so}(k,l) \oplus \frak{so}(m-k,n-l))$

Case: $m,n$ even

\begin{xy}
\ar@{-}(0,8.6) *{\bullet}; (5,0) *{\bullet}="A"
\ar@{-}(0,-8.6)*{\bullet}; "A"
\ar@{-} "A"; (10,0) 
\ar@{.} (10,0); (15,0) 
\ar@{-} (15,0); (20,0) *{\bullet}="B"
\ar@{-} "B"   ; (30,0) *{\circ}="C"
\ar@{-} "C"   ; (35,0) 
\ar@{.} (35,0); (40,0) 
\ar@{-} (40,0); (45,0) *{\circ}="D"
\ar@{--}"D"   ; (55,0) *{\star}="E"
\ar@{--}"E"   ; (65,0) *{\circ}="F"
\ar@{-} "F"   ; (70,0)
\ar@{.} (70,0); (75,0) 
\ar@{-} (75,0); (80,0) *{\circ}="G"
\ar@{-} "G"   ; (90,0) *{\bullet}="H"
\ar@{-} "H"   ; (95,0)
\ar@{.} (95,0); (100,0)
\ar@{-} (100,0); (105,0) *{\bullet}="I"
\ar@{-} "I"   ; (110,-8.6) *{\bullet}
\ar@{-} "I"   ; (110,8.6) *{\bullet}
\end{xy}

Case: $m+n$ odd

\begin{xy}
\ar@{-}(0,8.6) *{\bullet}; (5,0) *{\bullet}="A"
\ar@{-}(0,-8.6)*{\bullet}; "A"
\ar@{-} "A"; (10,0) 
\ar@{.} (10,0); (15,0) 
\ar@{-} (15,0); (20,0) *{\bullet}="B"
\ar@{-} "B"   ; (30,0) *{\circ}="C"
\ar@{-} "C"   ; (35,0) 
\ar@{.} (35,0); (40,0) 
\ar@{-} (40,0); (45,0) *{\circ}="D"
\ar@{--}"D"   ; (55,0) *{\star}="E"
\ar@{--}"E"   ; (65,0) *{\circ}="F"
\ar@{-} "F"   ; (70,0)
\ar@{.} (70,0); (75,0) 
\ar@{-} (75,0); (80,0) *{\circ}="G"
\ar@{-} "G"   ; (90,0) *{\bullet}="H"
\ar@{-} "H"   ; (95,0)
\ar@{.} (95,0); (100,0)
\ar@{-} (100,0); (105,0) *{\bullet}="I"
\ar@{=>} "I"   ; (115,0) *{\bullet}
\end{xy}

Case: $m,n$ odd

\begin{xy}
\ar@{<=}(0,0) *{\bullet}; (10,0) *{\bullet}="A"
\ar@{-} "A"; (15,0) 
\ar@{.} (15,0); (20,0) 
\ar@{-} (20,0); (25,0) *{\bullet}="B"
\ar@{-} "B"   ; (35,0) *{\circ}="C"
\ar@{-} "C"   ; (40,0) 
\ar@{.} (40,0); (45,0) 
\ar@{-} (45,0); (50,0) *{\circ}="D"
\ar@{--}"D"   ; (60,0) *{\star}="E"
\ar@{--}"E"   ; (70,0) *{\circ}="F"
\ar@{-} "F"   ; (75,0)
\ar@{.} (75,0); (80,0) 
\ar@{-} (80,0); (85,0) *{\circ}="G"
\ar@{-} "G"   ; (95,0) *{\bullet}="H"
\ar@{-} "H"   ; (100,0)
\ar@{.} (100,0); (105,0)
\ar@{-} (105,0); (110,0) *{\bullet}="I"
\ar@{=>} "I"   ; (120,0) *{\bullet}
\end{xy}

\medskip

\subsubsection{}
\ 

$n\geq 3$

$(\frak{g},\frak{g}^{\sigma})
=(\frak{so}(n,n),\frak{so}(n,\bb{C}))$

$(\frak{g},\frak{g}^{\theta\sigma})
=(\frak{so}(n,n),\frak{gl}(n,\bb{R}))$

Case: $n$ even

\begin{xy}
\ar@{--} (0,0) *{\star}="A"; (5,-8) *{\circ}="B1"
\ar@{--} "A"; (5,8) *{\circ}="B2"
\ar@{-} "B1" ; (15,-8) *{\circ}="C1"
\ar@{-} "C1" ; (25,-8)
\ar@{.} (25,-8); (35,-8)
\ar@{-} (35,-8); (45,-8) *{\circ}="D1"
\ar@{-} "D1"; (52,-13) *{\circ}="E1"
\ar@{-} "D1"; (52,-3)  *{\circ}="F1"
\ar@{-} "B2" ; (15,8) *{\circ}="C2"
\ar@{-} "C2" ; (25,8)
\ar@{.} (25,8); (35,8)
\ar@{-} (35,8); (45,8) *{\circ}="D2"
\ar@{-} "D2"; (52,13) *{\circ}="E2"
\ar@{-} "D2"; (52,3)  *{\circ}="F2"
\ar@/_/@{<->}"B1"; "B2"
\ar@/_/@{<->}"C1"; "C2"
\ar@/_/@{<->}"D1"; "D2"
\ar@/_{4mm}/@{<->}"E1"; "E2"
\ar@/_{.5mm}/@{<->}"F1"; "F2"
\end{xy}

Case: $n$ odd 

\medskip

\begin{xy}
\ar@{--} (0,0) *{\star}="A"; (5,-5) *{\circ}="B1"
\ar@{--} "A"; (5,5) *{\circ}="B2"
\ar@{-} "B1" ; (15,-5) *{\circ}="C1"
\ar@{-} "C1" ; (25,-5)
\ar@{.} (25,-5); (35,-5)
\ar@{-} (35,-5); (45,-5) *{\circ}="D1"
\ar@{=>} "D1"; (55,-5)  *{\circ}="E1"
\ar@{-} "B2" ; (15,5) *{\circ}="C2"
\ar@{-} "C2" ; (25,5)
\ar@{.} (25,5); (35,5)
\ar@{-} (35,5); (45,5) *{\circ}="D2"
\ar@{=>} "D2"; (55,5)  *{\circ}="E2"
\ar@/_/@{<->}"B1"; "B2"
\ar@/_/@{<->}"C1"; "C2"
\ar@/_/@{<->}"D1"; "D2"
\ar@/_/@{<->}"E1"; "E2"
\end{xy}

\subsubsection{}
\ 

$n\geq 2$

$(\frak{g},\frak{g}^\sigma)
=(\frak{so}^*(4n),\frak{su}^*(2n)\oplus\bb{R})$

\begin{xy}
\ar@{-} (0,0) *{\bullet}; (10,0) *{\circ}="A"
\ar@{--}  "A"; (10,-10) *{\star}
\ar@{-}  "A"; (20, 0) *{\bullet}="B"
\ar@{-}  "B"; (30,0)
\ar@{.}  (30,0); (40,0)
\ar@{-}  (40,0); (50,0) *{\circ}="C"
\ar@{-} "C"; (60,0) *{\bullet}
\end{xy}

\subsubsection{}
\ 

$n\geq 1$

$(\frak{g},\frak{g}^\sigma)
=(\frak{sp}(n,n),\frak{sp}(n,\bb{C}))$

$(\frak{g},\frak{g}^{\theta\sigma})
=(\frak{sp}(n,n),\frak{su}^*(2n)\oplus\bb{R})$

\medskip

\begin{xy}
\ar@{--} (0,0) *{\star}="A"; (5,-5) *{\circ}="B1"
\ar@{--} "A"; (5,5) *{\circ}="B2"
\ar@{-} "B1" ; (15,-5) *{\circ}="C1"
\ar@{-} "C1" ; (25,-5)
\ar@{.} (25,-5); (35,-5)
\ar@{-} (35,-5); (45,-5) *{\circ}="D1"
\ar@{<=} "D1"; (55,-5)  *{\circ}="E1"
\ar@{-} "B2" ; (15,5) *{\circ}="C2"
\ar@{-} "C2" ; (25,5)
\ar@{.} (25,5); (35,5)
\ar@{-} (35,5); (45,5) *{\circ}="D2"
\ar@{<=} "D2"; (55,5)  *{\circ}="E2"
\ar@/_/@{<->}"B1"; "B2"
\ar@/_/@{<->}"C1"; "C2"
\ar@/_/@{<->}"D1"; "D2"
\ar@/_/@{<->}"E1"; "E2"
\end{xy}

\subsubsection{}
\ 

$m,n-m\geq 1$

$(\frak{g},\frak{g}^\sigma)
=(\frak{sl}(n,\bb{C}),
\frak{sl}(m,\bb{C})\oplus\frak{sl}(n-m,\bb{C})\oplus\bb{C})$

$(\frak{g},\frak{g}^{\theta\sigma})
=(\frak{sl}(n,\bb{C}),\frak{su}(m,n-m))$

\smallskip

\begin{xy}
(7.5, 4)*{q}
\ar@{-} (0,-1) *{\circ}="A"; (5,-1)
\ar@{.} (5,-1) ; (10,-1)
\ar@{-} (10,-1) ; (15,-1) *{\circ}="B"
\ar@{-} "B"; (30,-1) *+[F]{\qquad p \qquad}="D"
\ar@{-} "D"; (45,-1) *{\circ}="E"
\ar@{-} "E"; (50,-1) *{}
\ar@{.} (50,-1); (55,-1) *{}
\ar@{-} (55,-1); (60,-1) *{\circ}="F"
\ar@{--} (30,-15) *{\star}="C"; "A"
\ar@{--} "C"; "F"
\ar@/_{10mm}/@{<->} "A"; "F"
\ar@/_{4mm}/@{<->} "B"; "E"
$\overbrace{\hspace*{15mm}}$
\end{xy}

\[
p=\begin{cases}
n-2m-1&\text{if $m<n-m$}\\
2m-n-1&\text{if $m>n-m$}\\
-1&\text{if $m=n-m$}
\end{cases}
\qquad 
q=\begin{cases}
m&\text{if $m<n-m$}\\
n-m&\text{if $m>n-m$}\\
m-1&\text{if $m=n-m$}
\end{cases}
\]

\subsubsection{}
\ 

$m,2n-m\geq 2,\quad |2n-2m|\geq 2$

$(\frak{g},\frak{g}^\sigma)
=(\frak{so}(2n,\bb{C}),
\frak{so}(m,\bb{C})\oplus\frak{so}(2n-m,\bb{C}))$

$(\frak{g},\frak{g}^{\theta\sigma})
=(\frak{so}(2n,\bb{C}),\frak{so}(m,2n-m))$

\begin{xy}
\ar@{-} (0,0) *{\circ}; (10,0) *{\circ}="A"
\ar@{--}"A"; (10,-10) *{\star}
\ar@{-} "A";  (20,0) 
\ar@{.} (20,0) ; (30,0)
\ar@{-} (30,0) ; (40,0) *{\circ}="C"
\ar@{-} "C" ; (50,0) *{\bullet}="D"
\ar@{-} "D";  (60,0) 
\ar@{.} (60,0) ; (70,0)
\ar@{-} (70,0) ; (80,0) *{\bullet}="E"
\ar@{-} "E" ; (85,-8.6) *{\bullet}
\ar@{-} "E" ; (85,8.6) *{\bullet}
\end{xy}

\subsubsection{}
\ 

$m,2n-m+1\geq 2,\quad |2n-2m+1|\geq 2$

$(\frak{g},\frak{g}^\sigma)
=(\frak{so}(2n+1,\bb{C}),
\frak{so}(m,\bb{C})\oplus\frak{so}(2n-m+1,\bb{C}))$

$(\frak{g},\frak{g}^{\theta\sigma})
=(\frak{so}(2n+1,\bb{C}),\frak{so}(m,2n-m+1))$

\begin{xy}
\ar@{-} (0,0) *{\circ}; (10,0) *{\circ}="A"
\ar@{--}"A"; (10,-10) *{\star}
\ar@{-} "A";  (20,0) 
\ar@{.} (20,0) ; (30,0)
\ar@{-} (30,0) ; (40,0) *{\circ}="C"
\ar@{-} "C" ; (50,0) *{\bullet}="D"
\ar@{-} "D";  (60,0) 
\ar@{.} (60,0) ; (70,0)
\ar@{-} (70,0) ; (80,0) *{\bullet}="E"
\ar@{=>} "E" ; (90,0) *{\bullet}
\end{xy}

\subsubsection{}
\ 

$n\geq 4$

$(\frak{g},\frak{g}^\sigma)
=(\frak{so}(2n,\bb{C}),\frak{gl}(n,\bb{C}))$

$(\frak{g},\frak{g}^{\theta\sigma})
=(\frak{so}(2n,\bb{C}),\frak{so}^*(2n))$

Case: $n$  even

\begin{xy}
\ar@{-} (0,0) *{\bullet}; (10,0) *{\circ}="A"
\ar@{--}"A"; (10,-10) *{\star}
\ar@{-} "A";  (20,0) *{\bullet}="B"
\ar@{-} "B";  (30,0)
\ar@{.} (30,0) ; (40,0)
\ar@{-} (40,0) ; (50,0) *{\circ}="C"
\ar@{-} "C";  (60,0) *{\bullet}="D"
\ar@{-} "D";  (70,0) *{\circ}="E"
\ar@{-} "E" ; (75,-8.6) *{\bullet}
\ar@{-} "E";  (75,8.6) *{\circ}
\end{xy}

Case: $n$ odd

\begin{xy}
\ar@{-} (0,0) *{\bullet}; (10,0) *{\circ}="A"
\ar@{--}"A"; (10,-10) *{\star}
\ar@{-} "A";  (20,0) *{\bullet}="B"
\ar@{-} "B";  (30,0)
\ar@{.} (30,0) ; (40,0)
\ar@{-} (40,0) ; (50,0) *{\circ}="C"
\ar@{-} "C";  (60,0) *{\bullet}="D"
\ar@{-} "D" ; (65,-8.6) *{\circ}="E"
\ar@{-} "D";  (65,8.6) *{\circ}="F"
\ar@/_/@{<->} "E"; "F"
\end{xy}

\subsubsection{}
\ 

$(\frak{g},\frak{g}^\sigma)=
(\frak{e}_{6(6)},\frak{so}(5,5)\oplus\bb{R})$

$(\frak{g},\frak{g}^{\theta\sigma})
=(\frak{e}_{6(6)},\frak{sp}(2,2))$

\begin{xy}
\ar@{-} (0,0) *{\bullet}; (10,0) *{\circ}="A"
\ar@{-}"A"; (20,0) *{\bullet}="B"
\ar@{<=} "B";  (30,0) *{\circ}="C"
\ar@{--} "C";  (40,0) *{\star}
\end{xy}

\subsubsection{}
\ 

$(\frak{g},\frak{g}^\sigma)=
(\frak{e}_{6(2)},\frak{su}(3,3)\oplus\frak{sl}(2,\bb{R}))$

\begin{xy}
\ar@{-} (0,0) *{\circ}="A"; (10,0) *{\circ}="B"
\ar@{-}"B"; (20,0) *{\circ}="C"
\ar@{-} "C";  (30,0) *{\circ}="D"
\ar@{-} "D";  (40,0) *{\circ}="F"
\ar@{--} "C";  (20,10) *{\star}="E"
\ar@{--} "E";  (20,20) *{\circ}
\ar@/_{6mm}/@{<->} "A"; "F"
\ar@/_{3mm}/@{<->} "B"; "D"
\end{xy}

\subsubsection{}
\ 

$(\frak{g},\frak{g}^\sigma)=
(\frak{e}_{6(-26)},\frak{so}(1,9)\oplus\bb{R})$

$(\frak{g},\frak{g}^{\theta\sigma})
=(\frak{e}_{6(-26)}, \frak{f}_{4(-20)})$

\begin{xy}
\ar@{-} (0,0) *{\bullet}="A"; (10,0) *{\bullet}="B"
\ar@{=>}"B"; (20,0) *{\bullet}="C"
\ar@{-} "C";  (30,0) *{\circ}="D"
\ar@{--} "D";  (40,0) *{\star}
\end{xy}

\subsubsection{}
\ 

$(\frak{g},\frak{g}^\sigma)=
(\frak{e}_{7(7)},\frak{so}(6,6)\oplus\frak{sl}(2,\bb{R}))$

$(\frak{g},\frak{g}^{\theta\sigma})
=(\frak{e}_{7(7)}, \frak{su}(4,4))$

\begin{xy}
\ar@{-} (0,0) *{\circ}="A"; (10,0) *{\circ}="B"
\ar@{-}"B"; (20,0) *{\circ}="C"
\ar@{-} "C";  (30,0) *{\circ}="D"
\ar@{-} "D";  (40,0) *{\circ}="E"
\ar@{-} "E";  (50,0) *{\circ}="F"
\ar@{-} "F";  (60,0) *{\circ}="G"
\ar@{--} "D";  (30,10) *{\star}
\ar@/_{9mm}/@{<->} "A"; "G"
\ar@/_{6mm}/@{<->} "B"; "F"
\ar@/_{3mm}/@{<->} "C"; "E"
\end{xy}

\subsubsection{}
\ 

$(\frak{g},\frak{g}^\sigma)=
(\frak{e}_{7(7)},\frak{su}^*(8))$

$(\frak{g},\frak{g}^{\theta\sigma})
=(\frak{e}_{7(7)}, \frak{e}_{6(6)}\oplus\bb{R})$

\begin{xy}
\ar@{-} (0,0) *{\bullet}="A"; (10,0) *{\circ}="B"
\ar@{-}"B"; (20,0) *{\bullet}="C"
\ar@{-} "C";  (30,0) *{\circ}="D"
\ar@{-} "D";  (40,0) *{\bullet}="E"
\ar@{-} "E";  (50,0) *{\circ}="F"
\ar@{-} "F";  (60,0) *{\bullet}="G"
\ar@{--} "D";  (30,10) *{\star}
\end{xy}

\subsubsection{}
\ 

$(\frak{g},\frak{g}^\sigma)=
(\frak{e}_{7(-5)},\frak{so}^*(12)\oplus\frak{sl}(2,\bb{R}))$

\begin{xy}
\ar@{-} (0,0) *{\bullet}="A"; (10,0) *{\circ}="B"
\ar@{-}"B"; (20,0) *{\bullet}="C"
\ar@{-} "C";  (30,0) *{\circ}="D"
\ar@{-} "D";  (35,8.6) *{\circ}="E"
\ar@{-} "D";  (35,-8.6) *{\bullet}
\ar@{--} "E"; (45,8.6) *{\star}="F"
\ar@{--} "F"; (55,8.6) *{\circ}
\end{xy}

\subsubsection{}
\ 

$(\frak{g},\frak{g}^\sigma)=
(\frak{e}_{7(-25)},\frak{e}_{6(-26)}\oplus\bb{R})$

\begin{xy}
\ar@{-} (0,0) *{\circ}="A"; (10,0) *{\bullet}="B"
\ar@{-}"B"; (20,0) *{\bullet}="C"
\ar@{-} "C";  (30,0) *{\bullet}="D"
\ar@{-} "D";  (40,0) *{\circ}="E"
\ar@{-} "C";  (20,10) *{\bullet}
\ar@{--} "E"; (50,0) *{\star}
\end{xy}

\subsubsection{}
\ 

$(\frak{g},\frak{g}^\sigma)=
(\frak{e}_{8(8)},\frak{so}^*(16))$

$(\frak{g},\frak{g}^{\theta\sigma})
=(\frak{e}_{8(8)},\frak{e}_{7(7)}\oplus\frak{sl}(2,\bb{R}))$

\begin{xy}
\ar@{-} (0,0) *{\bullet}="A"; (10,0) *{\circ}="B"
\ar@{-}"B"; (20,0) *{\bullet}="C"
\ar@{-} "C";  (30,0) *{\circ}="D"
\ar@{-} "D";  (40,0) *{\bullet}="E"
\ar@{-} "E";  (50,0) *{\circ}="F"
\ar@{-} "F";  (55,-8.6) *{\bullet}
\ar@{-} "F"; (55,8.6) *{\circ}="G"
\ar@{--}"G"; (65,8.6) *{\star}
\end{xy}

\subsubsection{}
\ 

$(\frak{g},\frak{g}^\sigma)=
(\frak{e}_{8(-24)},\frak{e}_{7(-25)}\oplus\frak{sl}(2,\bb{R}))$

\begin{xy}
\ar@{-} (0,0) *{\circ}="A"; (10,0) *{\bullet}="B"
\ar@{-}"B"; (20,0) *{\bullet}="C"
\ar@{-} "C";  (20,10) *{\bullet}
\ar@{-} "C";  (30,0) *{\bullet}="D"
\ar@{-} "D";  (40,0) *{\circ}="E"
\ar@{-} "E";  (50,0) *{\circ}="F"
\ar@{--}"F"; (60,0) *{\star}="G"
\ar@{--}"G"; (70,0) *{\circ}
\end{xy}

\subsubsection{}
\ 

$(\frak{g},\frak{g}^\sigma)=
(\frak{e}_{6}^{\bb{C}},
\frak{sl}(6,\bb{C})\oplus\frak{sl}(2,\bb{C}))$

$(\frak{g},\frak{g}^{\theta\sigma})
=(\frak{e}_{6}^{\bb{C}}, \frak{e}_{6(2)})$

\begin{xy}
\ar@{-} (0,0) *{\circ}="A"; (10,0) *{\circ}="B"
\ar@{-}"B"; (20,0) *{\circ}="C"
\ar@{-} "C";  (30,0) *{\circ}="D"
\ar@{-} "D";  (40,0) *{\circ}="F"
\ar@{-} "C";  (20,10) *{\circ}="E"
\ar@{--} "E";  (20,20) *{\star}
\ar@/_{6mm}/@{<->} "A"; "F"
\ar@/_{3mm}/@{<->} "B"; "D"
\end{xy}

\subsubsection{}
\ 

$(\frak{g},\frak{g}^\sigma)=
(\frak{e}_{6}^{\bb{C}},
\frak{so}(10,\bb{C})\oplus \bb{C})$

$(\frak{g},\frak{g}^{\theta\sigma})
=(\frak{e}_{6}^{\bb{C}}, \frak{e}_{6(-14)})$

\begin{xy}
\ar@{-} (0,0) *{\circ}="A"; (10,0) *{\bullet}="B"
\ar@{-}"B"; (20,0) *{\bullet}="C"
\ar@{-} "C";  (30,0) *{\bullet}="D"
\ar@{-} "D";  (40,0) *{\circ}="F"
\ar@{-} "C";  (20,10) *{\circ}="E"
\ar@{--} "E";  (20,20) *{\star}
\ar@/_{6mm}/@{<->} "A"; "F"
\end{xy}

\subsubsection{}
\ 

$(\frak{g},\frak{g}^\sigma)=
(\frak{e}_{7}^{\bb{C}},
\frak{so}(12,\bb{C})\oplus \frak{sl}(2,\bb{C}))$

$(\frak{g},\frak{g}^{\theta\sigma})
=(\frak{e}_{7}^{\bb{C}}, \frak{e}_{7(-5)})$

\begin{xy}
\ar@{--} (0,0) *{\star}="A"; (10,0) *{\circ}="B"
\ar@{-} "B"; (20,0) *{\circ}="C"
\ar@{-} "C"; (30,0) *{\circ}="D"
\ar@{-} "D"; (40,0) *{\bullet}="E"
\ar@{-} "E"; (50,0) *{\circ}="F"
\ar@{-} "F"; (60,0) *{\bullet}="G"
\ar@{-} "D"; (30,10)*{\bullet}
\end{xy}

\subsubsection{}
\ 

$(\frak{g},\frak{g}^\sigma)=
(\frak{e}_{7}^{\bb{C}},
\frak{e}_{6}^{\bb{C}}\oplus \bb{C})$

$(\frak{g},\frak{g}^{\theta\sigma})
=(\frak{e}_{7}^{\bb{C}}, \frak{e}_{7(-25)})$

\begin{xy}
\ar@{--} (0,0) *{\star}="A"; (10,0) *{\circ}="B"
\ar@{-} "B"; (20,0) *{\bullet}="C"
\ar@{-} "C"; (30,0) *{\bullet}="D"
\ar@{-} "D"; (40,0) *{\bullet}="E"
\ar@{-} "E"; (50,0) *{\circ}="F"
\ar@{-} "F"; (60,0) *{\circ}="G"
\ar@{-} "D"; (30,10)*{\bullet}
\end{xy}

\subsubsection{}
\ 

$(\frak{g},\frak{g}^\sigma)=
(\frak{e}_{8}^{\bb{C}},
\frak{e}_{7}^{\bb{C}}\oplus \frak{sl}(2,\bb{C}))$

$(\frak{g},\frak{g}^{\theta\sigma})
=(\frak{e}_{8}^{\bb{C}}, \frak{e}_{8(-24)})$

\begin{xy}
\ar@{-} (0,0) *{\circ}="A"; (10,0) *{\bullet}="B"
\ar@{-} "B"; (20,0) *{\bullet}="C"
\ar@{-} "C"; (20,10)*{\bullet}
\ar@{-} "C"; (30,0) *{\bullet}="D"
\ar@{-} "D"; (40,0) *{\circ}="E"
\ar@{-} "E"; (50,0) *{\circ}="F"
\ar@{-} "F"; (60,0) *{\circ}="G"
\ar@{--} "G"; (70,0) *{\star}
\end{xy}

\subsection{Case of holomorphic symmetric pairs}
\label{subsec:hol}

\subsubsection{}
\ 

$k,l,m-k,n-l\geq 1$

$(\frak{g},\frak{g}^\sigma)=
(\frak{su}(m,n),
\frak{su}(k,l)\oplus\frak{su}(m-k,n-l)\oplus\frak{u}(1))$

\smallskip

\begin{xy}
(7.5, 4)*{q},
(112.5, 4)*{s}
\ar@{-} (0,-1) *{\circ}="A"; (5,-1)
\ar@{.} (5,-1) ; (10,-1)
\ar@{-} (10,-1) ; (15,-1) *{\circ}="B"
\ar@{-} "B"; (25,-1) *+[F]{\quad p \quad}="D"
\ar@{-} "D"; (35,-1) *{\circ}="E"
\ar@{-} "E"; (40,-1) *{}
\ar@{.} (40,-1); (45,-1) *{}
\ar@{-} (45,-1); (50,-1) *{\circ}="F"
\ar@{--} (60,-1) *{\star}="C"; "F"
\ar@/_{7mm}/@{<->} "A"; "F"
\ar@/_{3mm}/@{<->} "B"; "E"
\ar@{--} "C"; (70,-1) *{\circ}="G"
\ar@{-} "G"; (75,-1)
\ar@{.} (75,-1) ; (80,-1)
\ar@{-} (80,-1) ; (85,-1) *{\circ}="H"
\ar@{-} "H"; (95,-1) *+[F]{\quad r \quad}="I"
\ar@{-} "I"; (105,-1) *{\circ}="J"
\ar@{-} "J"; (110,-1) *{}
\ar@{.} (110,-1); (115,-1) *{}
\ar@{-} (115,-1); (120,-1) *{\circ}="K"
\ar@/_{7mm}/@{<->} "G"; "K"
\ar@/_{3mm}/@{<->} "H"; "J"
$\overbrace{\hspace*{15mm}}$
$\hspace*{89mm}\overbrace{\hspace*{15mm}}$
\end{xy}

\begin{align*}
p=&\begin{cases}
m-2k-1&\text{if $k<m-k$}\\
2k-m-1&\text{if $k>m-k$}\\
-1&\text{if $k=m-k$}
\end{cases}&
\qquad 
q=&\begin{cases}
k&\text{if $k<m-k$}\\
m-k&\text{if $k>m-k$}\\
k-1&\text{if $k=m-k$}
\end{cases}&
\\
r=&\begin{cases}
n-2l-1&\text{if $l<n-l$}\\
2l-n-1&\text{if $l>n-l$}\\
-1&\text{if $l=n-l$}
\end{cases}&
\qquad 
s=&\begin{cases}
l&\text{if $l<n-l$}\\
n-l&\text{if $l>n-l$}\\
l-1&\text{if $l=n-l$}
\end{cases}&
\end{align*}

\subsubsection{}
\ 

$n\geq 2$

$(\frak{g},\frak{g}^\sigma)=
(\frak{su}(n,n),\frak{so}^*(2n))$

$(\frak{g},\frak{g}^{\theta\sigma})
=(\frak{su}(n,n),\frak{sp}(n,\bb{R}))$

\begin{xy}
\ar@{-} (0,0) *{\circ}="A"; (10,0)
\ar@{.} (10,0); (20,0) 
\ar@{-} (20,0); (30,0)*{\circ}="B"
\ar@{--} "B"; (40,0) *{\star}="C"
\ar@{--} "C"; (50,0) *{\circ}="D"
\ar@{-} "D"; (60,0)
\ar@{.} (60,0); (70,0)
\ar@{-} (70,0); (80,0) *{\circ}="E"
\ar@/_{5mm}/@{<->}"A"; "D"
\ar@/_{5mm}/@{<->}"B"; "E"
\end{xy}

\subsubsection{}
\ 

$m,2n-m+1\geq 1$

$(\frak{g},\frak{g}^\sigma)=
(\frak{so}(2,2n+1),\frak{so}(2,m)\oplus\frak{so}(2n-m+1))$

\begin{xy}
\ar@{--} (0,0) *{\star}="A"; (10,0) *{\circ}="B"
\ar@{-} "B"; (20,0) 
\ar@{.} (20,0); (30,0)
\ar@{-} (30,0); (40,0) *{\circ}="C"
\ar@{-} "C"; (50,0) *{\bullet}="D"
\ar@{-} "D"; (60,0)
\ar@{.} (60,0); (70,0)
\ar@{-} (70,0); (80,0) *{\bullet}="E"
\ar@{=>} "E"; (90,0) *{\bullet}
\end{xy}

\subsubsection{}
\ 

$m,2n-m\geq 1$

$(\frak{g},\frak{g}^\sigma)=
(\frak{so}(2,2n),\frak{so}(2,m)\oplus\frak{so}(2n-m))$

\begin{xy}
\ar@{--} (0,0) *{\star}="A"; (10,0) *{\circ}="B"
\ar@{-} "B"; (20,0) 
\ar@{.} (20,0); (30,0)
\ar@{-} (30,0); (40,0) *{\circ}="C"
\ar@{-} "C"; (50,0) *{\bullet}="D"
\ar@{-} "D"; (60,0)
\ar@{.} (60,0); (70,0)
\ar@{-} (70,0); (80,0) *{\bullet}="E"
\ar@{-} "E"; (85,-8.6) *{\bullet}
\ar@{-} "E"; (85,8.6) *{\bullet}
\end{xy}

\subsubsection{}
\ 

$m,n-m\geq 2$

$(\frak{g},\frak{g}^\sigma)=
(\frak{so}^*(2n),\frak{u}(m,n-m))$

$(\frak{g},\frak{g}^{\theta\sigma})
=(\frak{so}^*(2n),\frak{so}^*(2m)
\oplus\frak{so}^*(2n-2m))$

Case: $n\geq 5$

\begin{xy}
(12.5, 4)*{q},
\ar@{-} (0,-1) *{\circ}="A"; (10,-1) *{\circ}="B"
\ar@{-} "B" ; (15,-1)
\ar@{.} (15,-1) ; (20,-1)
\ar@{-} (20,-1) ; (25,-1) *{\circ}="C"
\ar@{-} "C"; (35,-1) *+[F]{\quad p \quad}="D"
\ar@{-} "D"; (45,-1) *{\circ}="E"
\ar@{-} "E"; (50,-1) *{}
\ar@{.} (50,-1); (55,-1) *{}
\ar@{-} (55,-1); (60,-1) *{\circ}="F"
\ar@{-} "F"; (70,-1) *{\circ}="G"
\ar@{--} (10,9) *{\star}="H"; "B"
\ar@/_{9mm}/@{<->} "A"; "G"
\ar@/_{6mm}/@{<->} "B"; "F"
\ar@/_{3mm}/@{<->} "C"; "E"
$\overbrace{\hspace*{25mm}}$
\end{xy}

\[
p=\begin{cases}
n-2m-1&\text{if $m<n-m$}\\
2m-n-1&\text{if $m>n-m$}\\
-1&\text{if $m=n-m$}
\end{cases}
\qquad 
q=\begin{cases}
m&\text{if $m<n-m$}\\
n-m&\text{if $m>n-m$}\\
m-1&\text{if $m=n-m$}
\end{cases}
\]

Case: $m=2, n=4$

\begin{xy}
\ar@{-} (0,0) *{\circ}="A"; (10,0) *{\circ}="B"
\ar@{-} "B" ; (20,0) *{\circ}="C"
\ar@{--} "B"; (10,10) *{\star}
\ar@/_{3mm}/@{<->} "A"; "C"
\end{xy}

\subsubsection{}
\ 

$m,n-m\geq 1$

$(\frak{g},\frak{g}^\sigma)=
(\frak{sp}(n,\bb{R}),\frak{u}(m,n-m))$

$(\frak{g},\frak{g}^{\theta\sigma})
=(\frak{sp}(n,\bb{R}),\frak{sp}(m,\bb{R})
\oplus\frak{sp}(n-m,\bb{R}))$

\begin{xy}
(17.5, 4)*{q},
\ar@{--} (0,-1) *{\star}; (10,-1) *{\circ}="A"
\ar@{-} "A" ; (15,-1) 
\ar@{.} (15,-1) ; (20,-1) 
\ar@{-} (20,-1) ; (25,-1) *{\circ}="B"
\ar@{-} "B"; (35,-1) *+[F]{\quad p \quad}="D"
\ar@{-} "D"; (45,-1) *{\circ}="E"
\ar@{-} "E"; (50,-1) *{}
\ar@{.} (50,-1); (55,-1) *{}
\ar@{-} (55,-1); (60,-1) *{\circ}="F"
\ar@/_{7mm}/@{<->} "A"; "F"
\ar@/_{3mm}/@{<->} "B"; "E"
$\hspace*{10mm}\overbrace{\hspace*{15mm}}$
\end{xy}

\[
p=\begin{cases}
n-2m-1&\text{if $m<n-m$}\\
2m-n-1&\text{if $m>n-m$}\\
-1&\text{if $m=n-m$}
\end{cases}
\qquad 
q=\begin{cases}
m&\text{if $m<n-m$}\\
n-m&\text{if $m>n-m$}\\
m-1&\text{if $m=n-m$}
\end{cases}
\]

\subsubsection{}
\ 

$(\frak{g},\frak{g}^\sigma)=
(\frak{e}_{6(-14)},\frak{su}(4,2)\oplus\frak{su}(2))$

\begin{xy}
\ar@{-} (0,0) *{\circ}="A"; (10,0) *{\circ}="B"
\ar@{-} "B"; (20,0) *{\circ}="C"
\ar@{-} "C"; (25,8.6) *{\circ}="D"
\ar@{-} "C"; (25,-8.6) *{\circ}="E"
\ar@{--} "D"; (35,8.6)*{\star}
\ar@/^/@{<->} "D"; "E"
\end{xy}

\subsubsection{}
\ 

$(\frak{g},\frak{g}^\sigma)=
(\frak{e}_{6(-14)},\frak{so}^*(10)\oplus\frak{so}(2))$

$(\frak{g},\frak{g}^{\theta\sigma})
=(\frak{e}_{6(-14)},\frak{su}(5,1)\oplus\frak{sl}(2,\bb{R}))$

\begin{xy}
\ar@{-} (0,0) *{\bullet}="A"; (10,0) *{\circ}="B"
\ar@{-} "B"; (20,0) *{\bullet}="C"
\ar@{-} "C"; (25,8.6) *{\circ}="D"
\ar@{-} "C"; (25,-8.6) *{\circ}="E"
\ar@{--} "D"; (35,8.6)*{\star}
\ar@/^/@{<->} "D"; "E"
\end{xy}

\subsubsection{}
\ 

$(\frak{g},\frak{g}^\sigma)=
(\frak{e}_{7(-25)},\frak{e}_{6(-14)}\oplus\frak{so}(2))$

$(\frak{g},\frak{g}^{\theta\sigma})
=(\frak{e}_{7(-25)},\frak{so}(2,10)\oplus\frak{sl}(2,\bb{R}))$

\begin{xy}
\ar@{-} (0,0) *{\circ}="A"; (10,0) *{\bullet}="B"
\ar@{-} "B"; (20,0) *{\bullet}="C"
\ar@{-} "C"; (30,0) *{\bullet}="D"
\ar@{-} "C"; (20,10) *{\circ}
\ar@{-} "D"; (40,0)*{\circ}="E"
\ar@{--} "E"; (50,0)*{\star}
\ar@/_{6mm}/@{<->} "A"; "E"
\end{xy}

\subsubsection{}
\ 

$(\frak{g},\frak{g}^\sigma)=
(\frak{e}_{7(-25)},\frak{su}(6,2))$

$(\frak{g},\frak{g}^{\theta\sigma})
=(\frak{e}_{7(-25)},\frak{so}^*(12)\oplus\frak{su}(2))$

\begin{xy}
\ar@{-} (0,0) *{\circ}="A"; (10,0) *{\circ}="B"
\ar@{-} "B"; (20,0) *{\circ}="C"
\ar@{-} "C"; (30,0) *{\circ}="D"
\ar@{-} "C"; (20,10) *{\circ}
\ar@{-} "D"; (40,0)*{\circ}="E"
\ar@{--} "E"; (50,0)*{\star}
\ar@/_{6mm}/@{<->} "A"; "E"
\ar@/_{3mm}/@{<->} "B"; "D"
\end{xy}

\newpage

%%%%%%%%%%%%%%%%%%%%%%%%%%%%%%%%%%%%%%%%%%%%%%
%%%%%%%%%%%%       list          %%%%%%%%%%%%%
%%%%%%%%%%%%%%%%%%%%%%%%%%%%%%%%%%%%%%%%%%%%%%

\begin{table}[H]
\begin{center}
\begin{tabular}{ccc}
\hline
\rule[0pt]{0pt}{12pt}
\rule[-5pt]{0pt}{5pt}
$\frak{g}$&
\multicolumn{2}{r}{
\hspace{15pt}
$a=a_1e_1+a_2e_2+\cdots\qquad$
See Settings \ref{sumn} to \ref{e82}.}\\

\hline

\rule[0pt]{0pt}{12pt}
\rule[-5pt]{0pt}{5pt}
& holomorphic & anti-holomorphic\\

\hline

\rule[0pt]{0pt}{12pt}
$\frak{su}(m,n)$&{
\rule[-5pt]{0pt}{5pt}
$a_m\geq a_{m+1}\ \ $}
& $a_{m+n}\geq a_{1}$
\\ \hline

\rule[0pt]{0pt}{12pt}
$\frak{so}(2,2n)$&{
\rule[-5pt]{0pt}{5pt}
$a_1\geq a_2$}
&{$-a_1\geq a_2$}\\ 
\hline

\rule[0pt]{0pt}{12pt}
$\frak{so}(2,2n+1)$&{
\rule[-5pt]{0pt}{5pt}
$a_1\geq a_2$}&
$-a_1\geq a_2$
\\ 
\hline

\rule[0pt]{0pt}{12pt}
$\frak{so}^*(2n)$&{
\rule[-5pt]{0pt}{5pt}
$a_{n-1}+a_n\geq 0$}&
$a_1+a_2\leq 0$\\ 
\hline

\rule[0pt]{0pt}{12pt}
$\frak{sp}(n,\bb{R})$&{
\rule[-5pt]{0pt}{5pt}
$a_{n}\geq 0$}
&$a_1\leq 0$\\ 
\hline

\rule[0pt]{0pt}{12pt}
$\frak{e}_{6(-14)}$&{
\rule[-5pt]{0pt}{5pt}
$a_6\geq a_1+a_2+a_3+a_4+a_5$}
&$-a_6\geq a_1+a_2+a_3+a_4-a_5$\\ 
\hline

\rule[0pt]{0pt}{12pt}
$\frak{e}_{7(-25)}$&{
\rule[-5pt]{0pt}{5pt}
$a_6\geq a_{5}$}
&$a_8\leq a_7$\\ 
\hline

\end{tabular}
\end{center}
\caption{holomorphic parabolic subalgebras}
\label{holparablist}
\vspace*{50pt}
\begin{center}
\begin{tabular}{cc}
\hline
\rule[0pt]{0pt}{12pt}
\rule[-5pt]{0pt}{7pt}
$\qquad\frak{g}\qquad$&$\qquad\frak{g}^\sigma\qquad$\\
\hline

\rule[0pt]{0pt}{12pt}
$\quad\frak{su}(m,n)\ m\neq n$&{
\rule[-5pt]{0pt}{5pt}
$\frak{su}(k,l)\oplus\frak{su}(m-k,n-l)\oplus\frak{u}(1)$}\\
\hline

\rule[0pt]{0pt}{12pt}
$\frak{su}(n,n)$&{
\rule[-5pt]{0pt}{5pt}
$\frak{su}(k,l)\oplus\frak{su}(n-k,n-l)\oplus\frak{u}(1)$}\\
&{\rule[-5pt]{0pt}{5pt}
$\frak{so}^*(2n)$}\\ 
&{\rule[-5pt]{0pt}{5pt}
$\frak{sp}(n,\bb{R})$}\\
\hline

\rule[0pt]{0pt}{12pt}
$\frak{so}(2,2n)$&{
\rule[-5pt]{0pt}{5pt}
$\frak{so}(2,k)\oplus\frak{so}(2n-k)$}\\
&{\rule[-5pt]{0pt}{5pt}
$\frak{u}(1,n)$}\\ 
\hline

\rule[0pt]{0pt}{12pt}
$\frak{so}(2,2n+1)$&{
\rule[-5pt]{0pt}{5pt}
$\frak{so}(2,k)\oplus\frak{so}(2n-k+1)$}\\ 
\hline

\rule[0pt]{0pt}{12pt}
$\frak{so}^*(2n)$&{
\rule[-5pt]{0pt}{5pt}
$\frak{u}(m,n-m)$}\\ 
&{\rule[-5pt]{0pt}{5pt}
$\frak{so}^*(2m)\oplus\frak{so}^*(2n-2m)$}\\ 
\hline

\rule[0pt]{0pt}{12pt}
$\frak{sp}(n,\bb{R})$&{
\rule[-5pt]{0pt}{5pt}
$\frak{u}(m,n-m)$}\\ 
&{\rule[-5pt]{0pt}{5pt}
$\frak{sp}(m,\bb{R})\oplus\frak{sp}(n-m,\bb{R})$}\\ 
\hline

\rule[0pt]{0pt}{12pt}
$\frak{e}_{6(-14)}$&{
\rule[-5pt]{0pt}{5pt}
$\frak{so}(10)\oplus \frak{so}(2)$}\\ 
&{\rule[-5pt]{0pt}{5pt}
$\frak{so}(2,8)\oplus \frak{so}(2)$}\\ 
&{\rule[-5pt]{0pt}{5pt}
$\frak{su}(4,2)\oplus \frak{su}(2)$}\\ 
&{\rule[-5pt]{0pt}{5pt}
$\frak{so}^*(10)\oplus \frak{so}(2)$}\\ 
&{\rule[-5pt]{0pt}{5pt}
$\frak{su}(5,1)\oplus \frak{sl}(2,\bb{R})$}\\ 
\hline

\rule[0pt]{0pt}{12pt}
$\frak{e}_{7(-25)}$&{
\rule[-5pt]{0pt}{5pt}
$\frak{e}_{6(-78)}\oplus \frak{so}(2)$}\\ 
&{\rule[-5pt]{0pt}{5pt}
$\frak{e}_{6(-14)}\oplus \frak{so}(2)$}\\ 
&{\rule[-5pt]{0pt}{5pt}
$\frak{so}(2,10)\oplus \frak{sl}(2,\bb{R})$}\\ 
&{\rule[-5pt]{0pt}{5pt}
$\frak{su}(6,2)$}\\ 
&{\rule[-5pt]{0pt}{5pt}
$\frak{so}^*(12)\oplus \frak{su}(2)$}\\ 
\hline

\end{tabular}
\end{center}
\caption{symmetric pairs of holomorphic type}
\label{holpairlist}
\end{table}

\newpage

\begin{table}[H]
\begin{center}
\begin{tabular}{ccr}
\hline
\rule[0pt]{0pt}{12pt}
$\frak{g}$&$\frak{g}^\sigma$&
$a=a_1e_1+a_2e_2+\cdots$
\\
\multicolumn{3}{r}{\rule[-5pt]{0pt}{5pt}
See Settings \ref{sumn} to \ref{e82}.}\\
\hline

\rule[0pt]{0pt}{12pt}
$\frak{su}(m,n)$&$\frak{su}(m,k)\oplus\frak{su}(n-k)\oplus\frak{u}(1)$&
\rule[-5pt]{0pt}{5pt}
$a_{m+n}\geq a_1$,\\ 
&&\rule[-5pt]{0pt}{5pt}$a_l \geq a_{m+1}$ and 
$a_{m+n} \geq a_{l+1} (1\leq \exists l\leq m-1)$,\\ 
&& \rule[-5pt]{0pt}{5pt}or $a_m\geq a_{m+1}$\\ \hline

\rule[0pt]{0pt}{12pt}
$\frak{su}(2,2n)$&$\frak{sp}(1,n)$&
\rule[-5pt]{0pt}{5pt}
$a_1\geq a_3$ and $a_{2n+2}\geq a_2$\\
\rule[-5pt]{0pt}{5pt}
$n\neq 1$&&\\ 
\hline

\rule[0pt]{0pt}{12pt}
$\frak{su}(2,2)$&$\frak{sp}(1,1)$&
\rule[-5pt]{0pt}{5pt}
$a_1\geq a_3\geq a_4\geq a_2$\\
&&
\rule[-5pt]{0pt}{5pt}
 or $a_3\geq a_1\geq a_2\geq a_4$\\
\hline

\rule[0pt]{0pt}{12pt}
$\frak{so}(2m,2n)$&$\frak{so}(2m,k)\oplus\frak{so}(2n-k)$&
\rule[-5pt]{0pt}{5pt}
$|a_m|\geq |a_{m+1}|$\\ \hline

\rule[0pt]{0pt}{12pt}
$\frak{so}(2m,2n+1)$&$\frak{so}(2m,k)\oplus\frak{so}(2n-k+1)$&
\rule[-5pt]{0pt}{5pt}
$|a_m|\geq a_{m+1}$\\
\hline

\rule[0pt]{0pt}{12pt}
$\frak{so}(4,2n)$&$\frak{u}(2,n)_1$&
\rule[-5pt]{0pt}{5pt}
$-a_2\geq |a_3|$\\
\rule[-5pt]{0pt}{5pt}
$n\neq 2$&$\frak{u}(2,n)_2$&
\rule[-5pt]{0pt}{5pt}
$a_2\geq |a_3|$\\
\hline

\rule[0pt]{0pt}{12pt}
$\frak{so}(4,4)$&$\frak{u}(2,2)_{11}$&
\rule[-5pt]{0pt}{5pt}
$-a_2\geq a_3$ or $-a_4\geq a_1$\\
&$\frak{u}(2,2)_{12}$&
\rule[-5pt]{0pt}{5pt}
$-a_2\geq a_3$ or $a_4\geq a_1$\\
&$\frak{u}(2,2)_{21}$&
\rule[-5pt]{0pt}{5pt}
$a_2\geq a_3$ or $-a_4\geq a_1$\\
&$\frak{u}(2,2)_{22}$&
\rule[-5pt]{0pt}{5pt}
$a_2\geq a_3$ or $a_4\geq a_1$\\
\hline

\rule[0pt]{0pt}{12pt}
$\frak{sp}(m,n)$&$\frak{sp}(m,k)\oplus\frak{sp}(n-k)$&
\rule[-5pt]{0pt}{5pt}
$a_m\geq a_{m+1}$\\
\hline

\rule[0pt]{0pt}{12pt}
$\frak{f}_{4(4)}$&$\frak{sp}(2,1)\oplus\frak{su}(2)$&
\rule[-5pt]{0pt}{5pt}
$a_1+a_2+a_3\leq a_4$\\
&$\frak{so}(5,4)$&
\rule[-5pt]{0pt}{5pt}
$a_1+a_2+a_3\leq a_4$\\
\hline

\rule[0pt]{0pt}{12pt}
$\frak{e}_{6(2)}$&$\frak{so}(6,4)\oplus\frak{so}(2)$&
\rule[-5pt]{0pt}{5pt}
$a_1+a_2+a_3-a_4-a_5-a_6\leq 2a_7$\\
&$\frak{su}(4,2)\oplus\frak{su}(2)$&
\rule[-5pt]{0pt}{5pt}
$a_1+a_2+a_3-a_4-a_5-a_6\leq 2a_7$\\
&$\frak{sp}(3,1)$&
\rule[-5pt]{0pt}{5pt}
$a_1+a_2+a_3-a_4-a_5-a_6\leq 2a_7$\\
&$\frak{f}_{4(4)}$&
\rule[-5pt]{0pt}{5pt}
$a_1+a_2+a_3-a_4-a_5-a_6\leq 2a_7$\\
\hline

\rule[0pt]{0pt}{12pt}
$\frak{e}_{7(-5)}$&$\frak{so}(8,4)\oplus \frak{su}(2)$&
\rule[-5pt]{0pt}{5pt}
$a_1+a_2+a_3+a_4+a_5-a_6\leq 2a_7$\\
&$\frak{su}(6,2)$&
\rule[-5pt]{0pt}{5pt}
$a_1+a_2+a_3+a_4+a_5-a_6\leq 2a_7$\\
&$\frak{e}_{6(2)}\oplus \frak{so}(2)$&
\rule[-5pt]{0pt}{5pt}
$a_1+a_2+a_3+a_4+a_5-a_6\leq 2a_7$\\
\hline

\rule[0pt]{0pt}{12pt}
$\frak{e}_{8(-24)}$&$\frak{so}(12,4)$&
\rule[-5pt]{0pt}{5pt}
$a_7\geq a_6$\\
&$\frak{e}_{7(-5)}\oplus \frak{su}(2)$&
\rule[-5pt]{0pt}{5pt}
$a_7\geq a_6$\\
\hline

\end{tabular}
\end{center}
\caption{$(\frak{g},\frak{g}^\sigma,\frak{q})$ of discrete series type}
\label{dslist}
\vspace*{140pt}
\end{table}

\newpage

\begin{table}[H]
\begin{center}
\begin{tabular}{ccr}
\hline
\rule[0pt]{0pt}{12pt}
$\qquad\frak{g}\qquad$&$\frak{g}^\sigma$&
\rule[-5pt]{0pt}{5pt}
$a=a_1e_1+a_2e_2+\cdots$
\\
\multicolumn{3}{r}{
\rule[-5pt]{0pt}{5pt}
See Settings \ref{sumn} to \ref{e82}.}
\\
\hline

\rule[0pt]{0pt}{12pt}
$\frak{su}(2m,2n)$&$\frak{sp}(m,n)$&
\rule[-5pt]{0pt}{5pt}
$(a_1,\dots,a_{2m} ; \, a_{2m+1},\dots,a_{2m+2n})$\\
\multicolumn{3}{r}{
\rule[-5pt]{0pt}{5pt}
$=(s,0,\dots,0;\, t,0,\dots,0)$, 
$(0,\dots,0,-s;\, 0,\dots,0,-t)$,}\\
\multicolumn{3}{r}{
\rule[-5pt]{0pt}{5pt}
$(s,0,\dots,0,-t;\, 0,\dots,0)$ 
or $(0,\dots,0;\, s,0,\dots,0,-t)$}\\
&&\rule[-5pt]{0pt}{5pt}
$\mod \bb{I}_{2m+2n} \quad (s,t\geq 0)$\\
\hline

\rule[0pt]{0pt}{12pt}
$\frak{so}(2m+1,2n)$&$\frak{so}(2m+1,k)\oplus\frak{so}(2n-k)$&
\rule[-5pt]{0pt}{5pt}
$a_{m+1}=\cdots= a_{m+n}=0$\\
\hline

\rule[0pt]{0pt}{12pt}
$\frak{so}(2m+1,2n+1)$&$\frak{so}(2m+1,k)\oplus\frak{so}(2n-k+1)$&
\rule[-5pt]{0pt}{5pt}
$a_{m+1}=\cdots= a_{m+n}=0$\\
\hline

\rule[0pt]{0pt}{12pt}
$\frak{so}(2m,2n)$&$\frak{u}(m,n)$&
\rule[-5pt]{0pt}{5pt}
$(a_1,\dots,a_m ; \, a_{m+1},\dots,a_{m+n})$\\
&\rule[-5pt]{0pt}{5pt}&
$=(s,0,\dots,0 ; \, 0,\dots,0)$\\ 
&\rule[-5pt]{0pt}{5pt}&
or $(0,\dots,0 ; \, s,0,\dots,0)$\\ 
\hline

\rule[0pt]{0pt}{12pt}
$\frak{so}^*(2n)$&$\frak{so}^*(2n-2)\oplus\frak{so}(2)$&
\rule[-5pt]{0pt}{5pt}
$(a_1,\dots,a_n)=(\underbrace{s,\dots,s}_k,-s,\dots,-s)$\\
&&
\rule[-5pt]{0pt}{5pt}
$(1\leq \exists k\leq n-1)$
\\
&$\frak{u}(n-1,1)$&
\rule[-5pt]{0pt}{5pt}
$(a_1,\dots,a_n)=(\underbrace{s,\dots,s}_k,-s,\dots,-s)$\\
&&
\rule[-5pt]{0pt}{5pt}
$(1\leq \exists k\leq n-1)$
\\
\hline

\rule[0pt]{0pt}{12pt}
$\frak{sp}(m,n)$
&$\frak{sp}(k,l)\oplus\frak{sp}(m-k,n-l)$&
\rule[-5pt]{0pt}{5pt}
$(a_1,\dots,a_m ; \, a_{m+1},\dots,a_{m+n})$\\
&\rule[-5pt]{0pt}{5pt}
$k,l,m-k,n-l\geq 1$&
$=(s,0,\dots,0 ; \, 0,\dots,0)$\\ 
&\rule[-9pt]{0pt}{5pt}
&
or $(0,\dots,0 ; \, s,0,\dots,0)$\\
&$\frak{sp}(m,k)\oplus\frak{sp}(n-k)$&
\rule[-5pt]{0pt}{5pt}
$(a_1,\dots,a_m ; \, a_{m+1},\dots,a_{m+n})$\\
&\rule[-5pt]{0pt}{5pt}&
$=(0,\dots,0 ; \, s,0,\dots,0),$\\
\multicolumn{3}{r}{
\rule[-5pt]{0pt}{5pt}
$a_{l-1}\geq a_{m+1}$ and $a_l=a_{m+2}=0\ (2\leq \exists l\leq m)$}\\
\hline

\rule[0pt]{0pt}{12pt}
$\frak{sl}(2n,\bb{C})$&$\frak{sp}(n,\bb{C})$&
\rule[-5pt]{0pt}{5pt}
$(a_1,\dots,a_n)=(s,0,\dots,0)$
or $(0,\dots,0,s)$
\\
&&\rule[-5pt]{0pt}{5pt}
$\mod \bb{I}_{2n}$\\
&$\frak{su}^*(2n)$&
\rule[-5pt]{0pt}{5pt}
$(a_1,\dots,a_n)=(s,0,\dots,0)$
or $(0,\dots,0,s)$\\
&&\rule[-5pt]{0pt}{5pt}
$\mod \bb{I}_{2n}$\\
\hline

\rule[0pt]{0pt}{12pt}
$\frak{so}(2n,\bb{C})$&$\frak{so}(2n-1,\bb{C})$&
\rule[-5pt]{0pt}{5pt}
$(a_1,\dots,a_n)=(s,\dots,s)$\\
&$\frak{so}(2n-1,1)$&
\rule[-5pt]{0pt}{5pt}
$(a_1,\dots,a_n)=(s,\dots,s)$\\
\hline

\rule[0pt]{0pt}{12pt}
$\frak{f}_{4(-20)}$&$\frak{so}(8,1)$&
\rule[-5pt]{0pt}{5pt}
$(a_1,a_2,a_3,a_4)=(s,s,s,s)$ or $(s,s,0,0)$\\
\hline

\rule[0pt]{0pt}{12pt}
$\frak{e}_{6(2)}$&$\frak{so}^*(10)\oplus\frak{so}(2)$&
\rule[-5pt]{0pt}{5pt}
$(a_1,\dots,a_7)=(s,s,s,s,t,t,0),$\\
&& \rule[-5pt]{0pt}{5pt}
or $(s,s,t,t,t,t,0)$\\
\hline

\rule[0pt]{0pt}{12pt}
$\frak{e}_{6(-14)}$&$\frak{so}(2,8)\oplus\frak{so}(2)$&
\rule[-5pt]{0pt}{5pt}
$(a_1,\dots,a_6)=(s,s,s,s,s,s),$\\
&& \rule[-5pt]{0pt}{5pt}
or $(s,s,s,s,-s,-s)$\\
\hline

\rule[0pt]{0pt}{12pt}
$\frak{e}_{6(-14)}$&$\frak{f}_{4(-20)}$&
\rule[-5pt]{0pt}{5pt}
$(a_1,\dots,a_6)=(s,s,0,0,0,0),$\\
&& \rule[-5pt]{0pt}{5pt}
or $(a_1,\dots,a_6)=(s,s,s,s,t,t)$\\
\hline

\rule[0pt]{0pt}{12pt}
$\frak{e}_{7(-5)}$&$\frak{e}_{6(-14)}\oplus \frak{so}(2)$&
\rule[-5pt]{0pt}{5pt}
$(a_1,\dots,a_7)=(s,s,s,s,s,s,0)$\\
\hline

&&
$s,t \in\bb{R},\quad \bb{I}_{n}=(\underbrace{1,\dots,1}_n)$\\

\end{tabular}
\end{center}
\caption{$(\frak{g},\frak{g}^\sigma,\frak{q})$ of isolated type}
\label{isolist}
\vspace*{50pt}
\end{table}

\newpage

\begin{table}[H]
\begin{center}
\begin{tabular}{ccc}
\hline
\rule[0pt]{0pt}{12pt}
\hspace{40pt}$\frak{g}$\hspace{40pt}
&$\frak{g}^\sigma$ \rule[-5pt]{0pt}{5pt}\\
\hline

\rule[0pt]{0pt}{12pt}
$\frak{su}(n,n)$&\rule[-5pt]{0pt}{5pt}
$\frak{sl}(n,\bb{C})\oplus\bb{R}$&$n\geq 1$\\
\hline

\rule[0pt]{0pt}{12pt}
$\frak{so}(m,n)$&\rule[-5pt]{0pt}{5pt}
$\frak{so}(k,l)\oplus \frak{so}(m-k,n-l)$
&\rule[-5pt]{0pt}{5pt}
$k,l,m-k,n-l\geq 1$\\
\hline

\rule[0pt]{0pt}{12pt}
$\frak{so}(n,n)$&
\rule[-5pt]{0pt}{5pt}
$\frak{so}(n,\bb{C})$&$n\geq 3$\\
&\rule[-5pt]{0pt}{5pt}
$\frak{gl}(n,\bb{R})$&$n\geq 3$\\
\hline

\rule[0pt]{0pt}{12pt}
$\frak{so}^*(4n)$&\rule[-5pt]{0pt}{5pt}
$\frak{su}^*(2n)\oplus\bb{R}$&
$n\geq 2$\\
\hline

\rule[0pt]{0pt}{12pt}
$\frak{sp}(n,n)$&\rule[-5pt]{0pt}{5pt}
$\frak{sp}(n,\bb{C})$&$n\geq 1$\\
&\rule[-5pt]{0pt}{5pt}
$\frak{su}^*(2n)\oplus\bb{R}$&$n\geq 1$\\
\hline

\rule[0pt]{0pt}{12pt}
$\frak{sp}(2n,\bb{R})$&\rule[-5pt]{0pt}{5pt}
$\frak{sp}(n,\bb{C})$&
$n\geq 2$
\\
\hline

\rule[0pt]{0pt}{12pt}
$\frak{sl}(2n,\bb{C})$&\rule[-5pt]{0pt}{5pt}
$\frak{sl}(m,\bb{C})\oplus
\frak{sl}(2n-m,\bb{C})\oplus\bb{C}$&
$m,2n-m\geq 1$\\
&\rule[-5pt]{0pt}{5pt}
$\frak{su}(m,2n-m)$&$m,2n-m\geq 1$\\
\hline

\rule[0pt]{0pt}{12pt}
$\frak{so}(2n,\bb{C})$&\rule[-5pt]{0pt}{5pt}
$\frak{so}(m,\bb{C})\oplus\frak{so}(2n-m,\bb{C})$
&$m,2n-m\geq 2$\\
&\rule[-5pt]{0pt}{5pt}
$\frak{so}(m,2n-m)$
&$m,2n-m\geq 2$\\
&\rule[-5pt]{0pt}{5pt}
$\frak{gl}(n,\bb{C})$&$n\geq 3$\\
&\rule[-5pt]{0pt}{5pt}
$\frak{so}^*(2n)$&$n\geq 3$\\
\hline

\rule[0pt]{0pt}{12pt}
$\frak{e}_{6(2)}$&\rule[-5pt]{0pt}{5pt}
$\frak{su}(3,3)\oplus\frak{sl}(2,\bb{R})$\\
\hline

\rule[0pt]{0pt}{12pt}
$\frak{e}_{7(-5)}$&\rule[-5pt]{0pt}{5pt}
$\frak{so}^*(12)\oplus\frak{sl}(2,\bb{R})$\\
\hline

\rule[0pt]{0pt}{12pt}
$\frak{e}_{7(-25)}$&\rule[-5pt]{0pt}{5pt}
$\frak{e}_{6(-26)}\oplus\bb{R}$\\
\hline

\rule[0pt]{0pt}{12pt}
$\frak{e}_{8(-24)}$&\rule[-5pt]{0pt}{5pt}
$\frak{e}_{7(-25)}\oplus\frak{sl}(2,\bb{R})$\\
\hline

\end{tabular}
\end{center}
\caption{}
\label{notddpair}
\end{table}

\bigskip


\begin{thebibliography}{99}
\bibitem{ber}
M. Berger, \href{http://www.numdam.org/item?id=ASENS_1957_3_74_2_85_0}{\it Les espaces sym\'{e}triques non compacts}, 
Ann.\ Sci.\ \'{E}cole\ Norm.\ Sup.\ 74 (1957), 85--177.
\bibitem{hel}
S. Helgason, ``Differential Geometry, Lie Groups, and Symmetric Spaces",
Pure and Appl.\ Math., Academic Press, 1978.
\bibitem{KnVo}
A. W. Knapp,  D. Vogan, Jr.,
``Cohomological Induction and Unitary 
Representations", 
Princeton  U.P., 1995.
\bibitem{kob94}
T. Kobayashi, \href{http://dx.doi.org/10.1007/BF01232239}{\it Discrete decomposability of the restriction of 
$A_\frak{q}(\lambda)$ with respect to reductive} subgroups 
and its applications, 
Invent.\ Math.\ 117 (1994), 181--205.
\bibitem{kob98i}
T. Kobayashi, \href{http://dx.doi.org/10.2307/120963}{\it Discrete decomposability of the restriction of 
$A_\frak{q}(\lambda)$, II.
---micro-local analysis} and asymptotic $K$-support,
Ann.\ of Math.\ 147 (1998), 709--729.
\bibitem{kob98ii}
T. Kobayashi, \href{http://dx.doi.org/10.1007/s002220050203}{\it Discrete decomposability of the restriction of 
$A_\frak{q}(\lambda)$, III.
---restriction of} Harish-Chandra modules and associated varieties, 
Invent.\ Math.\ 131 (1998), 229--256.
\bibitem{kob98iii}
T. Kobayashi, \href{http://dx.doi.org/10.1006/jfan.1997.3128}{\it Discrete series representations for the orbit spaces arising from two involutions} of real reductive Lie groups, 
J.\ Funct.\ Anal.\ 152 (1998), 100--135.
\end{thebibliography}
\end{document}